\def\mymode{i}  
\renewcommand{\today}{%
  \number\day\space
  \ifcase\month\or
  January\or February\or March\or April\or May\or June\or
  July\or August\or September\or October\or November\or December\fi
  \space \number\year}
\newtheorem{thm}{Theorem}[section]
\newtheorem{prop}[thm]{Proposition}
\newtheorem{cor}[thm]{Corollary}
\newtheorem{lem}[thm]{Lemma}
\newtheorem{fact}[thm]{Fact}
\theoremstyle{definition}
\newtheorem{defn}[thm]{Definition}
\newtheorem{nota}[thm]{Notation}
\newtheorem{con}[thm]{Convention}
\theoremstyle{remark}
\newtheorem{rem}[thm]{Remark}
\DeclareSymbolFont{AMSb}{U}{msb}{m}{n}
\DeclareMathSymbol{\N}{\mathbin}{AMSb}{"4E}
\DeclareMathSymbol{\Z}{\mathbin}{AMSb}{"5A}
\DeclareMathSymbol{\R}{\mathbin}{AMSb}{"52}
\DeclareMathSymbol{\Q}{\mathbin}{AMSb}{"51}
\DeclareMathSymbol{\I}{\mathbin}{AMSb}{"49}
\DeclareMathSymbol{\C}{\mathbin}{AMSb}{"43}
\newcommand{\equals}{\mathrel{\mathop:}=}
\def\dotminussym#1#2{%
  \setbox0=\hbox{$\m@th#1-$}%
  \kern.5\wd0%
  \hbox to 0pt{\hss\hbox{$\m@th#1-$}\hss}%
  \raise.8\ht0\hbox to 0pt{\hss$\m@th#1.$\hss}%
  \kern.5\wd0}
\newcommand{\dotminus}{\mathbin{\mathpalette\dotminussym{}}}
\title[A proof of completeness]{A proof of completeness for\\ continuous first-order logic}
\author{Itaï Ben Yaacov}
\address{Itaï \textsc{Ben Yaacov} \\
  Université Claude Bernard -- Lyon 1 \\
  Institut Camille Jordan \\
  43 boulevard du 11 novembre 1918 \\
  69622 Villeurbanne Cedex \\
  France}
\urladdr{\url{http://math.univ-lyon1.fr/~begnac}}
\author{Arthur Paul Pedersen}
\address{Arthur Paul Pedersen\\
Department of Philosophy\\ 
Carnegie Mellon University\\ 
Pittsburgh, PA 15213, USA} 
\email{\url{apaulpedersen@cmu.edu}}
\urladdr{\url{http://andrew.cmu.edu/~ppederse}}
\thanks{We wish to thank Jeremy Avigad for valuable comments. We also wish to thank Petr Hájek for offering useful remarks and for pointing us to important references.  Finally, we wish to thank an anonymous referee for helpful suggestions.}
\thanks{First author supported by
  ANR chaire d'excellence junior THEMODMET (ANR-06-CEXC-007) and
  by Marie Curie research network ModNet.}
\thanks{\textit{Revision} {\svnInfoRevision} \textit{of} \today}
\begin{document}

\begin{abstract} Continuous first-order logic has found interest among model theorists who wish to extend the classical analysis of ``algebraic'' structures (such as fields, group, and graphs) to various natural classes of complete metric structures (such as probability algebras, Hilbert spaces, and Banach spaces). With research in continuous first-order logic preoccupied with studying the model theory of this framework, we find a natural question calls for attention: Is there an interesting set of axioms yielding a completeness result?

The primary purpose of this article is to show that a certain, interesting set of axioms does indeed yield a completeness result for continuous first-order logic. In particular, we show that in continuous first-order logic a set of formulae is (completely) satisfiable if (and only if) it is consistent. From this result it follows that continuous first-order logic also satisfies an \emph{approximated} form of strong completeness, whereby $\Sigma\vDash\varphi$ (if and) only if $\Sigma\vdash\varphi\dotminus 2^{-n}$ for all $n<\omega$.  This approximated form of strong completeness asserts that if $\Sigma\vDash\varphi$, then proofs from $\Sigma$, being finite, can provide arbitrary better approximations of the truth of $\varphi$.

Additionally, we consider a different kind of question traditionally
arising in model theory -- that of decidability:
When is the set of all consequences of a theory (in a countable,
recursive language) recursive?
Say that a complete theory $T$ is \emph{decidable} if for every
sentence $\varphi$, the value $\varphi_T$ is a recursive real, and
moreover, uniformly computable from $\varphi$.
If $T$ is incomplete, we say it is decidable if for every sentence
$\varphi$ the real number $\varphi_{T}^{\footnotesize{\circ}}$ is
uniformly recursive from $\varphi$, where
$\varphi_{T}^{\footnotesize{\circ}}$ is the maximal value of $\varphi$
consistent with $T$.
As in classical first-order logic, it follows from the completeness
theorem of continuous first-order logic that if a complete theory
admits a recursive (or even recursively enumerable) axiomatization
then it is decidable.
\end{abstract}
\maketitle

\section{Introduction}
\label{sec:series}

Roughly speaking, model theory studies first-order theories and the
corresponding classes of their models (i.e., elementary classes). 
Properties of the first-order theory of a structure can then give
direct insight into the structure itself.  Investigation thereof was classically
restricted to so-called ``algebraic'' structures, such as fields,
groups, and graphs. Additionally, in modern model theory one often studies \emph{stable}
theories --- theories whose models admit a ``well-behaved'' notion of
independence (which, if it exists, is always unique).  

\emph{Continuous first-order logic} was developed  in
\cite{BenYaacov-Usvyatsov:CFO} as an extension of classical
first-order logic, permitting one to broaden the aforementioned
classical analysis of algebraic structures to various natural classes
of complete metric structures.
(It should be pointed out that  classes of complete metric structures
cannot be elementary in the classical sense for several reasons.
For example, 
completeness is an infinitary property and is therefore not 
expressible in classical first-order logic.
See also \cite{BenYaacov-Berenstein-Henson-Usvyatsov:NewtonMS} for a
general survey of continuous logic and its applications for various kinds of
metric structures arising in functional analysis and probability theory.)
For example, the class
(of unit balls) of Hilbert spaces and the class of probability algebras are elementary in this sense. (A
probability algebra is the Boolean algebra of events of a probability
space modulo the null measure ideal, with the metric $d(A,B)=\mu(A\Delta B)$.) Furthermore, the classical notion of
stability can easily be extended to continuous first-order logic. Indeed, somewhat unsurprisingly,
the classes of Hilbert spaces and probability algebras are stable,
independence being orthogonality and probabilistic independence,
respectively.  

 Historically, two groups of logics precede continuous first-order logic.   On the one hand, continuous first-order logic has \emph{structural} precursors.  The structural precursors are those logics which make use of machinery similar to that of continuous first-order logic yet were never developed to study complete metric structures.  Such structural precursors include Chang and Keisler's continuous logic \cite{Chang-Keisler:ContinuousModelTheory}, Łukasiewicz's many-valued logic \cite{Hajek:FuzzyLogic}, and Pavelka's many-valued logic \cite{Pavelka:OnFuzzyLogic}.  Chang and Keisler's logic is much too general for the study of complete metric structures, while Łukasiewicz logic and Pavelka's logic were developed for different purposes.
Nonetheless, continuous first-order logic is an improved variant of
Chang and Keisler's logic.
On the other hand, continuous first-order logic has \emph{purposive}
precursors.
The purposive precursors are those logics which were developed to
study complete metric structures yet do not make use of machinery
similar to that of continuous first-order logic.
The purposive precursors of continuous first-order logic include
Henson's logic for Banach structures \cite{Henson:NonstandardHulls}
and compact abstract theories (``cats'')
\cite{BenYaacov:PositiveModelTheoryAndCats,BenYaacov:SimplicityInCats,BenYaacov:Morley}.
Continuous first-order logic does not suffer from several shortcomings
of these logics.
Importantly, continuous first-order logic is less
technically involved than the previous logics and in many respects much
closer to classical first-order logic.
Still, continuous first-order logic is expressively equivalent to the
logic of metric open Hausdorff cats.
Continuous first-order logic also generalizes Henson's logic
for Banach structures, and as such, is expressively equivalent to a
natural variant of Henson's logic.

As an extension of classical first-order logic, continuous first-order logic satisfies suitably phrased forms of the compactness theorem, the L\"{o}wenheim-Skolem theorems, the diagram arguments, Craig's interpolation theorem, Beth's definability theorem, characterizations of quantifier elimination and model completeness, the existence of saturated and homogeneous models results, the omitting types theorem, fundamental results of stability theory, and nearly all other results of elementary model theory.
Moreover, continuous first-order logic affords a tractable framework for ultraproduct constructions (and so hull constructions) in applications of model theory in analysis and geometry.  In fact, expressing conditions from analysis and geometry feels quite natural in continuous first-order logic, furnishing model theorists and analysts with a common language.

Thus it is clear that continuous first-order logic is of interest to model theorists.  Yet with research focused on the model theory of continuous first-order logic and thus semantic features of this framework, a natural question seems to lurk in the background: Is there an interesting set of axioms yielding a completeness result? The answer depends on how one formulates the notion of completeness. To be sure, there is an interesting set of axioms, and,
as we will see, a set of formulae is (completely) satisfiable if (and only if) it is consistent.
However, as for continuous propositional logic, only an \emph{approximated} form of strong completeness is obtainable.  By this we mean that $\Sigma\vDash\varphi$ only if $\Sigma\vdash\varphi\dotminus2^{-n}$ for all $n<\omega$, which amounts to the idea if $\Sigma\vDash\varphi$, then proofs from $\Sigma$, being finite, can provide arbitrarily better approximations of the truth of $\varphi$. (What \emph{this} means will become clearer below.)  
This should hardly be surprising in light of the fact that continuous first-order logic has been developed for complete metric structures equipped with uniformly continuous functions with respect to which formulae take truth values anywhere in $[0,1]$.

Of course, a different kind of question traditionally arising in model theory is
that of decidability: When is the set of all consequences of a theory
(in a countable, recursive language) recursive?  Again, such questions
can be extended to continuous first-order logic.  Say that a complete theory $T$ is
\emph{decidable} if for every sentence $\varphi$, the value $\varphi_T$
is a recursive real, and moreover, uniformly computable from $\varphi$.
If $T$ is incomplete, we say it is decidable if for every sentence
$\varphi$ the real number $\varphi_{T}^{\footnotesize{\circ}}$ is uniformly recursive from
$\varphi$, where $\varphi_{T}^{\footnotesize{\circ}}$ is the maximal value of $\varphi$
consistent with $T$. (See Definition \ref{defn:deg}.  If $T$ is complete, then $\varphi$ takes the same value in all models of $T$, so $\varphi_{T}$ coincides with $\varphi_{T}^{\footnotesize{\circ}}$ and therefore $\varphi_{T}^{\footnotesize{\circ}}$ is the unique value of $\varphi$ consistent with $T$.)  As in classical first-order logic, it follows from the
completeness theorem that if a complete theory admits a recursive (or
even recursively enumerable) axiomatization then it is decidable, whence the connection
to the present paper.

Following an introduction to Łukasiewicz propositional logic and continuous propositional logic, we offer a definition of the language of continuous first-order logic and then supply a precise formulation of its semantics.   Indeed, this paper can also be seen as an effort to precisely organize and unify the various presentations of continuous first-order logic found in the literature which are often intimated in a rough-and-ready form.  Finally, we state and usually prove various results needed to reach the goal of this paper: to state and prove the completeness theorem for continuous first-order logic.  

To follow our intuitions to this end, the structure of our approach is largely borrowed from the classical approach employed to prove the completeness theorem.   In particular, we make use of a Henkin-like construction and a weakened version of the deduction theorem of classical first-order logic.  Moreover, various definitions and results found in the classical approach are translated to play analogous roles in our development, while from \cite{BenYaacov-Usvyatsov:CFO}  we take some basic facts and definitions peculiar to continuous first-order logic.  It will become apparent, however, that our approach differs from the classical one in many respects. Furthermore, the completeness theorem we offer is formulated with respect to the semantics employed by the model theorist who studies continuous first-order logic.  In particular, our work does not exploit an algebraic semantics. Finally, we should note that results of research on the interplay between logical and deductive entailment for both continuous propositional logic and Łukasiewicz propositional logic play a crucial role in getting our feet off the ground so that we may follow our intuitions in the first place \cite{BenYaacov:RandomVariables}. (This work of the first author is partially based on the results of work done by Chang \cite{Chang:ProofOfLukasiewiczAxiom, Chang:NewCompletenessProof} and Rose and Rosser \cite{Rose-Rosser:FragmentsManyValuedCalculi} on Łukasiewicz logic.)  With this, we set out to the task at hand.

\section{Łukasiewicz Logic and Continuous Logic}
\label{sec: luk and cont}
\begin{defn}
\label{def:luk}
Let $\mathcal{S}_{0}=\{P_{i}:i\in I\}$ be a set of distinct symbols.  Let $\mathcal{S}$ be freely generated from $\mathcal{S}_{0}$ by the formal binary operation $\dotminus$ and the unary operation $\neg$.  We call $\mathcal{S}$ a \emph{Łukasiewicz propositional logic}.  
\end{defn}
\begin{defn}\label{def:luksat}  Let $\mathcal{S}$ be a Łukasiewicz propositional logic.
\mbox{}
\begin{itemize}  
\item[(i)]   If $v_{0}:\mathcal{S}_{0}\to [0,1]$ is a mapping, we can extend $v_{0}$ to a unique mapping $v:\mathcal{S}\to [0,1]$ by setting
\begin{itemize}
\item[$\bullet$] $v(\varphi\dotminus\psi)\equals\max(v(\varphi)-v(\psi),0)$. 
\item[$\bullet$] $v(\neg\varphi)\equals1-v(\varphi)$. 
\end{itemize}
We call $v$ the \emph{truth assignment} defined by $v_{0}$.
\item[(ii)]  If $\Sigma\subseteq\mathcal{S}$, we write $v\vDash\Sigma$ if $v(\varphi)=0$ for all $\varphi\in\Sigma$, and we call $v$ a $model$ of $\Sigma$. We also write $v\vDash\varphi$ if $v\vDash\{\varphi\}$.
\item[(iii)] We say that $\Sigma\subseteq\mathcal{S}$ is \emph{satisfiable} if it has a model. 
\item[(iv)] We write $\Sigma\vDash\varphi$ if every model of $\Sigma$ is also a model of $\varphi$.
\end{itemize}
We may write $\Sigma\vDash^{\tiny\L}\!\varphi$ to indicate that we are dealing with a Łukasiewicz propositional logic.\end{defn}

\begin{rem}
Observe that 0 corresponds to truth and any $r\in(0,1]$ corresponds to a degree of truth or falsity, where 1 may be construed as absolute falsity.   Also observe that `$\dotminus$' plays a role analogous to that of `$\to$' in classical logic: We may interpret `$\psi\dotminus\varphi$' as `$\psi$ is \emph{implied} by $\varphi$,' `$\varphi$ is \emph{at least} as false as $\psi$', `$\psi$ is \emph{at most} as true as $\varphi$,' or simply, `$\psi$ is \emph{less than or equal} to $\varphi$.'  We prefer the last interpretation.
\end{rem}

\begin{defn}
\label{def:cont}
Let $\mathcal{S}_{0}=\{P_{i}:i\in I\}$ be a set of distinct symbols.  Let $\mathcal{S}$ be freely generated from $\mathcal{S}_{0}$ by the formal binary operation $\dotminus$ and the unary operations $\neg$ and $\frac12$.  We call $\mathcal{S}$ a \emph{continuous propositional logic}.  
\end{defn}
A truth assignment $v$ is defined as in (i) of Definition \ref{def:luksat} with the extra condition that $v(\frac12\varphi)\equals\frac12 v(\varphi)$.  Models, satisfiability, and logical entailment are defined as in Definition \ref{def:luksat}. We may write  $\Sigma\vDash^{\tiny\text{C}\L}\!\!\varphi$ to indicate that we are dealing with a continuous propositional logic.	  
\section{Axioms: Group 1}
\label{sec: Axioms}
We now present six of our fourteen axiom schemata.  The first four form an axiomatization for Łukasiewicz propositional logic \cite{Chang:ProofOfLukasiewiczAxiom, Rose-Rosser:FragmentsManyValuedCalculi}.\\  

  \newcounter{Lcount}
  \begin{list}{(A\arabic{Lcount})}
    {\usecounter{Lcount}
    \setlength{\rightmargin}{\leftmargin}}
  \item $(\varphi\dotminus\psi)\dotminus\varphi$
  \item $((\chi\dotminus\varphi)\dotminus(\chi\dotminus\psi))\dotminus (\psi\dotminus\varphi)$
  \item $(\varphi\dotminus(\varphi\dotminus\psi))\dotminus(\psi\dotminus(\psi\dotminus\varphi))$
  \item $(\varphi\dotminus\psi)\dotminus(\neg\psi\dotminus\neg\varphi)$\\
\end{list}

When the next two axiom schemata are added to the first four (in the appropriate language), we obtain an axiomatization for continuous propositional logic \cite{BenYaacov:RandomVariables}.\\
\begin{list}{(A\arabic{Lcount})}
{\usecounter{Lcount}}
\setcounter{Lcount}{4}
  \item $\frac12\varphi\dotminus(\varphi\dotminus\frac12\varphi)$
  \item $(\varphi\dotminus\frac12\varphi)\dotminus\frac12\varphi$\\  
\end{list}

Note that (A5) and (A6) say that $\frac{1}{2}$ behaves as it ought to.   Informally, under the intended interpretation, (A5) and (A6) taken together imply that $\frac12\varphi\dotplus\frac12\varphi=\varphi$. (`$\dotplus$' may be defined by setting $\varphi\dotplus\psi\equals\neg(\neg\varphi\dotminus\psi)$; thus, according to the intended interpretation of $`\dotminus$', the interpretation of $`\dotplus$' is given by $x\dotplus y=\min(x+y,1)$.) 

Formal deductions and the relation $\vdash$ for both logics are defined in the natural way, the only rule of inference being \emph{modus ponens}:
\begin{gather*}
  \frac{\varphi,\,\,\,\,\,\psi\dotminus\varphi}{\psi}
\end{gather*}
We can make this more precise as follows:  a \emph{formal deduction} from $\Sigma$ is a finite sequence of formulae $(\varphi_{i}:i<n)$ such that for each $i<n$, either (i) $\varphi_{i}$ is an instance of an axiom schema, (ii) $\varphi_{i}\in \Sigma$, or (iii) there are $j,k<i$ such that $\varphi_{k}=\varphi_{i}\dotminus\varphi_{j}$.  We accordingly say that $\varphi$ is \emph{provable from}  (or \emph{deducible from}, or \emph{a consequence of}) $\Sigma$  and write $\Sigma\vdash\varphi$ if there is a formal deduction from  $\Sigma$ ending in $\varphi$. Observe that a formula $\varphi$ is provable from $\Sigma$ just in case it is provable from a finite subset of $\Sigma$.  To avoid confusion, we may write $\Sigma\vdash^{\tiny\L}\!\varphi$ to indicate that $\varphi$ is provable from $\Sigma$ in Łukasiewicz propositional logic, and we may write $\Sigma\vdash^{\tiny\text{C}\L}\!\!\varphi$ to indicate that $\varphi$ is provable from $\Sigma$ in continuous propositional logic.  

It should be fairly clear to the reader how the proof systems of
continuous propositional logic and Łukasiewicz propositional logic
are related.
Both proof systems are of course sound, by which we mean that
if $\Sigma\vdash^{\tiny\L}\!\varphi$
(respectively, $\Sigma\vdash^{\tiny\text{C}\L}\!\!\varphi$),
then
$\Sigma\models^{\tiny\L}\!\varphi$
(respectively, $\Sigma\models^{\text{C\tiny\L}}\!\!\varphi$).
We leave this section with a definition, a few notational conventions, and a remark, each of which addresses some aspects of our presentation of continuous logic in this paper.
\begin{defn}  We say that a set of formulae $\Sigma$ is \emph{inconsistent} if $\Sigma\vdash\varphi$ for every formula $\varphi$ and \emph{consistent} otherwise. 
\end{defn}
\begin{nota}
Define $\psi\dotminus n\varphi$ by recursion on $n<\omega$: 
\mbox{}
\begin{itemize}
\item[(i)] $\psi\dotminus 0\varphi\equals\psi$.
\item[(ii)] $\psi\dotminus (n+1)\varphi\equals(\psi\dotminus n\varphi)\dotminus\varphi$.
\end{itemize}
\end{nota}
\begin{nota}
Let $1$ be shorthand for $\neg(\varphi\dotminus\varphi)$, where $\varphi$ is any formula, and let $2^{-n}$ be shorthand for $\underbrace{\frac12\cdots\frac12}_{n-times}1$.  Also, let $0$ be shorthand for $\neg 1$. 
\end{nota}

\begin{rem} Observe that on any truth assignment $v$ the set $\mathcal{D}$ generated from $1$ by applying the operations $\neg$, $\dotminus$, and $\frac{1}{2}$ is such that $\{v(d):d\in\mathcal{D}\}=\mathbb{D}$, the set of dyadic numbers, i.e., numbers of the form $\frac {k}{2^{n}}$, where $k,n<\omega$ and $k\leq 2^{n}$.  For simplicity of notation, we do not distinguish between the syntactic set $\mathcal{D}$ thus generated and $\mathbb{D}$.
\end{rem}

\section{Black Box Theorems}\label{sec:bbt}  We now record several results which will be used in this paper.

\begin{fact}[Weak Completeness for Łukasiewicz Logic \cite{Chang:NewCompletenessProof, Rose-Rosser:FragmentsManyValuedCalculi}]
\label{fact:weak}  Let $\mathcal{S}$ be a Łukasiewicz propositional logic and $\varphi\in\mathcal{S}$.  Then $\vDash\varphi$ if and only if $\,\vdash\varphi$. 
\end{fact}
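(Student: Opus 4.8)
The plan is to split the biconditional into the routine soundness direction and the substantial completeness direction. For soundness ($\vdash\varphi \Rightarrow \vDash\varphi$) I would check that each of (A1)--(A4) takes value $0$ under every truth assignment --- a direct computation with $\max(\cdot-\cdot,0)$ and $1-(\cdot)$ --- and that modus ponens preserves value $0$: if $v(\varphi)=0$ and $v(\psi\dotminus\varphi)=0$, then since $v(\psi\dotminus\varphi)=\max(v(\psi)-v(\varphi),0)=v(\psi)$, we get $v(\psi)=0$. Induction on the length of a deduction finishes this direction. All the content lies in the converse.

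For completeness I would pass to algebra. Keeping in mind that here $0$ denotes truth, orient the semantics so that the designated value is the bottom of $[0,1]$. Form the Lindenbaum--Tarski algebra: set $\varphi\equiv\psi$ when $\vdash\varphi\dotminus\psi$ and $\vdash\psi\dotminus\varphi$, and verify from (A1)--(A4) that $\equiv$ is a congruence for $\dotminus$ and $\neg$ and that the quotient is an MV-algebra, the defining identities of MV-algebras being exactly what the four schemata together with modus ponens deliver. This quotient is the free MV-algebra on the generators $\mathcal{S}_0$, and in it $\vdash\varphi$ holds precisely when the class of $\varphi$ is the truth element. On the semantic side a truth assignment is nothing but a homomorphism of this free algebra into the standard MV-algebra on $[0,1]$, so $\vDash\varphi$ says that $\varphi$ is sent to the truth element under every such homomorphism. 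The theorem thereby reduces to a purely algebraic assertion: every MV-identity valid in $[0,1]$ is valid in all MV-algebras --- equivalently, $[0,1]$ generates the variety of MV-algebras.

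This generation claim (Chang's completeness theorem for MV-algebras) is the heart of the matter and the step I expect to be the main obstacle. I would establish it in two moves. First, a subdirect representation: every MV-algebra is a subdirect product of totally ordered MV-algebras, since the intersection of its prime ideals is trivial; hence it suffices to treat MV-chains. Second --- the genuinely hard part --- every nontrivial MV-chain embeds into an ultrapower of $[0,1]$. Here I would invoke the structure theory: via the Mundici/Chang correspondence a totally ordered MV-algebra is the unit interval $\Gamma(G,u)$ of a totally ordered abelian group $G$ with strong unit $u$, and a Hölder/Hahn-type embedding places such an ordered group, and hence the chain, inside an ultrapower of $\R$ (respectively of $[0,1]$). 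Since satisfaction of an identity in $[0,1]$ is inherited by subalgebras, products, and ultrapowers, an identity true in $[0,1]$ is true in every chain, and so by the subdirect representation in every MV-algebra. Reading this back through the free algebra yields $\vDash\varphi\Rightarrow\vdash\varphi$.

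A purely syntactic alternative, closer to Rose--Rosser, would bypass MV-algebras: reduce each formula to a normal form, observe that a valid formula corresponds to the constantly-true piecewise-linear (McNaughton) function it induces, and construct a deduction by induction on the combinatorial complexity of that function. I expect this route to be elementary in principle but far heavier in bookkeeping, which is why I would prefer the algebraic argument and locate the real difficulty in embedding MV-chains into ultrapowers of the standard model.
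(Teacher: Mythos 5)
Your outline is sound, but note that the paper does not prove this statement at all: it appears in the section titled ``Black Box Theorems'' as a \emph{Fact}, imported wholesale from the literature with citations to Chang's 1959 completeness proof and to Rose--Rosser, and the authors never revisit it. What you have written is essentially a reconstruction of the cited Chang proof: soundness by induction on deductions (your computation for \emph{modus ponens} is right, keeping in mind that $0$ is the designated value), then the Lindenbaum--Tarski quotient, the reduction to the claim that $[0,1]$ generates the variety of MV-algebras, subdirect decomposition into chains, and the embedding of chains into ultrapowers of $[0,1]$ via the correspondence with totally ordered abelian groups with strong unit. The one place where you are waving your hands over real work is the step ``verify from (A1)--(A4) that the quotient is an MV-algebra'': deriving the MV-identities (and in particular the dispensability of Łukasiewicz's fifth axiom) from these four schemata is exactly the content of the Chang and Rose--Rosser papers cited, and is a substantial syntactic undertaking rather than a routine verification. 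You correctly locate the other genuinely hard step in the embedding of MV-chains into ultrapowers of the standard algebra, and your fallback via the Rose--Rosser/McNaughton normal-form route is also a legitimate (if bookkeeping-heavy) alternative. In short: your proposal is a correct plan for proving the fact, whereas the paper simply cites it; there is no in-paper argument to compare against beyond the attribution, and your approach matches the attributed source.
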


\begin{fact}\label{fact:lukcomp} Let $\mathcal{S}$ be a  Łukasiewicz propositional logic, and $\Sigma\subseteq\mathcal{S}$.  Then $\Sigma$ is consistent if and only if it is satisfiable.
\end{fact}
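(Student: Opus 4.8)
The plan is to establish the two directions separately, getting ``satisfiable $\Rightarrow$ consistent'' from soundness and reducing the converse to Fact~\ref{fact:weak} (weak completeness) together with compactness. For the easy direction it is convenient first to observe that $\Sigma$ is inconsistent if and only if $\Sigma\vdash 1$. Indeed, $\vDash\varphi\dotminus 1$ for every $\varphi$ since $v(\varphi)\le 1=v(1)$, so $\vdash\varphi\dotminus 1$ by Fact~\ref{fact:weak}, and modus ponens from $1$ and $\varphi\dotminus 1$ yields $\varphi$; thus $\{1\}\vdash\varphi$ for all $\varphi$, which gives the nontrivial implication. Consequently, if an inconsistent $\Sigma$ had a model $v$, the soundness of the system (noted above) would force $v(1)=0$, contradicting $v(1)=1$.

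The content lies in the converse, which I would prove contrapositively. The workhorse, immediate from Fact~\ref{fact:weak}, is the following: if $\vDash\varphi\dotminus\psi$ (that is, $v(\varphi)\le v(\psi)$ for every truth assignment $v$), then $\{\psi\}\vdash\varphi$, because $\vdash\varphi\dotminus\psi$ by weak completeness and modus ponens then derives $\varphi$ from $\psi$. Using this one verifies $\{\alpha,\beta\}\vdash\alpha\dotplus\beta$ (apply the workhorse to the validity $((\alpha\dotplus\beta)\dotminus\beta)\dotminus\alpha$, valid since $v(\alpha\dotplus\beta)\le v(\alpha)+v(\beta)$, then perform one modus ponens), and hence by induction that any $\dotplus$-combination of formulae each provable from a set $\Sigma$ is itself provable from $\Sigma$.

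Now suppose $\Sigma_0=\{\psi_1,\dots,\psi_k\}$ is finite and unsatisfiable. Only finitely many propositional symbols occur in $\Sigma_0$, so the relevant truth assignments range over a compact cube on which $v\mapsto\sum_i v(\psi_i)$ is continuous and, by unsatisfiability, everywhere positive; it therefore attains a minimum $\epsilon>0$. Choosing $N$ with $N\epsilon\ge 1$ and letting $\Theta$ be the $\dotplus$-sum of $N$ copies of each $\psi_i$, one has $v(\Theta)=\min\bigl(N\sum_i v(\psi_i),\,1\bigr)=1$ for every $v$. By the previous paragraph $\Sigma_0\vdash\Theta$, while $\vDash 1\dotminus\Theta$ gives $\{\Theta\}\vdash 1$; combining these, $\Sigma_0\vdash 1$, so $\Sigma_0$ is inconsistent. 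For an arbitrary consistent $\Sigma$, every finite subset is consistent (proofs being finite), hence satisfiable by the finite case just treated; and since each set $\{v:v(\psi)=0\}$ is closed in the compact product $[0,1]^{\mathcal{S}_0}$, the finite intersection property produces a single $v$ satisfying all of $\Sigma$.

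I expect the main obstacle to be the middle step: converting the purely semantic fact that a finite unsatisfiable set has a strictly positive ``gap'' $\epsilon$ into an actual deduction of $1$. The maneuver of summing enough copies with $\dotplus$ to push the truth value up to the constant $1$, so that weak completeness may be invoked on a genuine validity, is the crux; everything else --- soundness, the Tychonoff compactness of $[0,1]^{\mathcal{S}_0}$, and the elementary derivation lemmas --- is routine.
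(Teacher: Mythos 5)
Your proof is correct, but there is nothing in the paper to compare it against: Fact~\ref{fact:lukcomp} is placed in the section of ``black box theorems'' and is simply attributed to the literature (Belluce--Chang, H\'ajek, and the first author's work on random variables), with no internal proof given. What you supply is a self-contained reduction of strong completeness (consistency $\Leftrightarrow$ satisfiability) to the weak completeness of Fact~\ref{fact:weak}, and every step checks out: the characterization of inconsistency as $\Sigma\vdash 1$; the derived rule $\{\alpha,\beta\}\vdash\alpha\dotplus\beta$; the observation that a finite unsatisfiable set has a strictly positive semantic gap $\epsilon$ on the compact cube of relevant assignments; the amplification trick of $\dotplus$-summing $N$ copies of each hypothesis with $N\epsilon\ge 1$, which produces a formula of constant value $1$ to which weak completeness and the derived rules apply; and the Tychonoff/finite-intersection-property passage from finite subsets to all of $\Sigma$. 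This is in substance the standard route by which finite strong completeness (Fact~\ref{fact:finite}) and then Fact~\ref{fact:lukcomp} are obtained from weak completeness in the sources the paper cites, so your argument fills a gap the paper deliberately leaves open, and does so by an appropriately elementary method. The one point worth making explicit in a polished write-up is that the value of a $\dotplus$-sum equals $\min\bigl(\sum_i v(\psi_i),1\bigr)$ independently of the bracketing, since $\dotplus$ is only a binary connective; this identity is what licenses the computation $v(\Theta)=1$ and hence the appeal to Fact~\ref{fact:weak}.
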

\begin{fact}\label{fact:contcomp} Let $\mathcal{S}$ be a continuous propositional logic and $\Sigma\subseteq\mathcal{S}$.  Then $\Sigma$ is consistent if and only if it is satisfiable. 
\end{fact}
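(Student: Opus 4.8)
The plan is to prove the two directions separately: satisfiable $\Rightarrow$ consistent by soundness, and consistent $\Rightarrow$ satisfiable by reducing to the corresponding statement for Łukasiewicz logic, Fact~\ref{fact:lukcomp}. For the first direction I would argue by contradiction: if $\Sigma$ had a model $v$ yet were inconsistent, then $\Sigma\vdash 1$, so by soundness $\Sigma\vDash 1$ and hence $v(1)=0$; but $v(1)=1$ for every truth assignment, a contradiction. Thus every satisfiable set is consistent, and all the real work lies in the converse.

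For the converse the idea is to treat the halving operation as a family of fresh propositional variables constrained by (A5) and (A6). I would let $\mathcal{S}'$ be the Łukasiewicz propositional logic whose atoms are $\mathcal{S}_0\cup\{c_\psi:\psi\in\mathcal{S}\}$, one new symbol $c_\psi$ for each continuous formula $\psi$, and define a translation $\iota:\mathcal{S}\to\mathcal{S}'$ by $\iota(P_i)=P_i$, $\iota(\neg\varphi)=\neg\iota(\varphi)$, $\iota(\varphi\dotminus\psi)=\iota(\varphi)\dotminus\iota(\psi)$, and $\iota(\frac12\varphi)=c_\varphi$, together with a reverse map $\pi:\mathcal{S}'\to\mathcal{S}$ sending $P_i\mapsto P_i$ and $c_\psi\mapsto\frac12\psi$ and commuting with $\dotminus$ and $\neg$; an easy induction gives $\pi\circ\iota=\mathrm{id}_{\mathcal{S}}$. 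I would then let $T\subseteq\mathcal{S}'$ collect, for each $\varphi$, the two formulas $c_\varphi\dotminus(\iota(\varphi)\dotminus c_\varphi)$ and $(\iota(\varphi)\dotminus c_\varphi)\dotminus c_\varphi$, i.e.\ the $\iota$-images of the instances of (A5) and (A6); a short computation on values shows that a Łukasiewicz assignment $w$ satisfies these precisely when $w(c_\varphi)=\frac12 w(\iota(\varphi))$.

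Assuming $\Sigma$ consistent, the first task is to show $\iota(\Sigma)\cup T$ is consistent in $\mathcal{S}'$. I would argue contrapositively using $\pi$: it carries each instance of (A1)--(A4) to a continuous instance of the same schema, each member of $T$ to an instance of (A5) or (A6), each $\iota(\sigma)$ back to $\sigma\in\Sigma$, and it preserves modus ponens, so applying $\pi$ term by term turns any Łukasiewicz deduction from $\iota(\Sigma)\cup T$ into a continuous deduction from $\Sigma$. Hence if $\iota(\Sigma)\cup T$ were inconsistent, then $\iota(\Sigma)\cup T\vdash\iota(\psi)$ for every $\psi\in\mathcal{S}$, so $\Sigma\vdash\pi\iota(\psi)=\psi$ and $\Sigma$ would be inconsistent. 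With consistency in hand, Fact~\ref{fact:lukcomp} supplies a Łukasiewicz model $w$ of $\iota(\Sigma)\cup T$. I would then take the continuous truth assignment $v$ determined by $v_0=w|_{\mathcal{S}_0}$ and prove $v(\varphi)=w(\iota(\varphi))$ by induction on $\varphi$: the atomic, $\neg$, and $\dotminus$ cases are immediate, and for $\varphi=\frac12\alpha$ the fact that $w\vDash T$ gives $w(c_\alpha)=\frac12 w(\iota(\alpha))=\frac12 v(\alpha)=v(\frac12\alpha)$. In particular $v(\sigma)=w(\iota(\sigma))=0$ for each $\sigma\in\Sigma$, so $v\vDash\Sigma$ and $\Sigma$ is satisfiable.

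The hard part will be the consistency-transfer step --- arranging the translation $\iota$ and its left inverse $\pi$ so that every deductive move in the enriched Łukasiewicz system projects back to a legitimate continuous deduction. Once the pullback is checked to respect axioms, hypotheses, and modus ponens, Fact~\ref{fact:lukcomp} does the rest, and the only remaining point --- that $T$ simultaneously pins down each $c_\varphi$ semantically and projects onto provable instances of (A5)--(A6) --- is a routine verification on truth values.
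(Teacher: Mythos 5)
Your proof is correct, but there is nothing in the paper to compare it against: Fact~\ref{fact:contcomp} sits in the section of ``Black Box Theorems'' and is stated without proof, being attributed to the first author's work in preparation (with related results credited to Hájek and to Cignoli--D'Ottaviano--Mundici). So you have supplied an argument where the paper supplies only a citation. That said, your reduction holds up under scrutiny. The soundness direction is exactly as the paper licenses (``both proof systems are of course sound''), since an inconsistent $\Sigma$ proves $1$ while every truth assignment gives $1$ the value $1\neq 0$. For the converse, the key computation is that $w$ satisfies both $c_\varphi\dotminus(\iota(\varphi)\dotminus c_\varphi)$ and $(\iota(\varphi)\dotminus c_\varphi)\dotminus c_\varphi$ exactly when $w(c_\varphi)=\max\bigl(w(\iota(\varphi))-w(c_\varphi),0\bigr)$, which forces $w(c_\varphi)=\tfrac12 w(\iota(\varphi))$ (the case $w(\iota(\varphi))<w(c_\varphi)$ would force $w(c_\varphi)=0$ and hence a negative value, which is impossible); this is the routine verification you flagged, and it works. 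The consistency-transfer step is also sound: because $\pi$ is a $\{\dotminus,\neg\}$-homomorphism, it carries instances of (A1)--(A4) over $\mathcal{S}'$ to instances of the same schemata over $\mathcal{S}$, carries members of $T$ to instances of (A5)--(A6), fixes $\Sigma$ via $\pi\circ\iota=\mathrm{id}$, and commutes with \emph{modus ponens}, so a Łukasiewicz refutation of $\iota(\Sigma)\cup T$ would project to a continuous refutation of $\Sigma$. Fact~\ref{fact:lukcomp} and the final induction $v(\varphi)=w(\iota(\varphi))$ then deliver the model. This is the natural way to bootstrap the continuous propositional completeness theorem from the Łukasiewicz one, and it is a legitimate use of Fact~\ref{fact:lukcomp}, which the paper does make available as a black box.
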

An immediate corollary of the previous fact is the following approximated form of strong completeness.
\begin{fact}[Approximated Strong Completeness for Continuous Logic]\label{fact:approx}
Let $\mathcal{S}$ be a continuous propositional logic, $\Sigma\subseteq\mathcal{S}$, and $\varphi\in\mathcal{S}$.  Then $\Sigma\vDash\varphi$ if and only if $\Sigma\vdash\varphi\dotminus 2^{-n}$ for all $n<\omega$.
\end{fact}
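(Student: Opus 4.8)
The plan is to prove the two implications separately. The implication from approximate provability to semantic entailment is the \emph{soundness} direction and is the easy one; the converse is the substantive direction and is where Fact \ref{fact:contcomp} does the work.

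For the easy direction, suppose $\Sigma\vdash\varphi\dotminus 2^{-n}$ for every $n<\omega$. Since the proof system is sound, $\Sigma\vDash\varphi\dotminus 2^{-n}$ for every $n$. Now fix any model $v$ of $\Sigma$. Recalling that $v(2^{-n})=2^{-n}$ (the syntactic $2^{-n}$ evaluates to the dyadic number $2^{-n}$ on every truth assignment), we get $v(\varphi\dotminus 2^{-n})=\max(v(\varphi)-2^{-n},0)=0$, i.e.\ $v(\varphi)\le 2^{-n}$, for every $n$. Letting $n\to\infty$ yields $v(\varphi)\le\inf_{n}2^{-n}=0$, so $v(\varphi)=0$ and $v\vDash\varphi$. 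As $v$ was an arbitrary model of $\Sigma$, this gives $\Sigma\vDash\varphi$.

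For the substantive direction, suppose $\Sigma\vDash\varphi$ and fix $n<\omega$; I must produce a proof of $\varphi\dotminus 2^{-n}$ from $\Sigma$. First I would pass to the semantic side and form the set $\Sigma'=\Sigma\cup\{2^{-n}\dotminus\varphi\}$. A model of $\Sigma'$ is precisely a model $v$ of $\Sigma$ with $v(2^{-n}\dotminus\varphi)=\max(2^{-n}-v(\varphi),0)=0$, i.e.\ with $v(\varphi)\ge 2^{-n}>0$. But $\Sigma\vDash\varphi$ forces $v(\varphi)=0$ on every model of $\Sigma$, so $\Sigma'$ has no model, i.e.\ is unsatisfiable. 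Invoking Fact \ref{fact:contcomp}, $\Sigma'$ is therefore \emph{inconsistent}, hence proves every formula; in particular $\Sigma\cup\{2^{-n}\dotminus\varphi\}\vdash\varphi\dotminus 2^{-n}$.

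The remaining and genuinely delicate step is to \emph{discharge} the added hypothesis $2^{-n}\dotminus\varphi$ and recover an honest proof $\Sigma\vdash\varphi\dotminus 2^{-n}$. This is where the argument departs from the classical one: the naive move of instead adjoining a negation of the target overshoots, since asserting $\neg(\varphi\dotminus 2^{-n})$ forces the value $1$ rather than merely a positive value, and the reduction of inconsistency to provability is not the classical one. The tool I would use is the weak (multiplicative) deduction theorem characteristic of Łukasiewicz and continuous logic---from $\Sigma\cup\{\psi\}\vdash\chi$ one extracts some $k<\omega$ with $\Sigma\vdash\chi\dotminus k\psi$---applied with $\psi=2^{-n}\dotminus\varphi$. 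Taking $\chi$ to be an absolutely false formula, this yields $\Sigma\vdash 1\dotminus k(2^{-n}\dotminus\varphi)$ for some $k$, and a direct check shows that $(\varphi\dotminus 2^{-n})\dotminus\bigl(1\dotminus k(2^{-n}\dotminus\varphi)\bigr)$ is valid, hence provable; one last \emph{modus ponens} then delivers $\Sigma\vdash\varphi\dotminus 2^{-n}$. I expect this discharge---locating the right finite exponent $k$ and seeing that the dyadic buffer $2^{-n}$ is exactly what lets a \emph{finite} proof capture the entailment---to be the main obstacle, whereas the passage through Fact \ref{fact:contcomp} is immediate.
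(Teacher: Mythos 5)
Your proof is correct and takes essentially the same route as the paper: the paper presents this Fact as an immediate corollary of Fact \ref{fact:contcomp}, and its proof of the first-order analogue (the corollary following Theorem \ref{thm:comp}) is precisely your argument --- pass to $\Sigma\cup\{2^{-n}\dotminus\varphi\}$, infer inconsistency from unsatisfiability, and discharge the added hypothesis via the weak deduction theorem together with the Łukasiewicz validity $(\beta\dotminus\alpha)\dotminus\bigl(1\dotminus k(\alpha\dotminus\beta)\bigr)$, exactly as in the proof of Lemma \ref{lem:max}(ii). Your identification of the discharge step as the only delicate point matches the paper's ``cf.\ proof of Lemma \ref{lem:max}'' remark.
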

In fact, this is the best we can hope for.  To see why, consider $\Sigma\equals\{P\dotminus 2^{-n}:n<\omega\}$. Then $\Sigma\vDash P$, yet for no finite $\Sigma_{0}\subseteq\Sigma$ do we have $\Sigma_{0}\vDash P$.  However, we do have the following weaker result.
\begin{fact}[Finite Strong Completeness for Continuous Logic] \label{fact:finite}
Let $\mathcal{S}$ be a continuous propositional logic, $\Sigma\subseteq\mathcal{S}$ be finite, and $\varphi\in\mathcal{S}$. Then $\Sigma\vDash\varphi$ if and only if $\Sigma\vdash\varphi$.
\end{fact}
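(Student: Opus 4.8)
The backward direction is just soundness, noted above, so the content is the forward direction for a finite set $\Sigma=\{\psi_1,\dots,\psi_k\}$ with $\Sigma\vDash\varphi$. The plan is to push the premises into the conclusion. For an integer $n<\omega$ set $\theta\equals\varphi\dotminus n\psi_1\dotminus\cdots\dotminus n\psi_k$, where each $\psi_i$ is subtracted $n$ times in the sense of the notation fixed above; since all the subtracted quantities are nonnegative, $v(\theta)=\max\!\big(v(\varphi)-n\sum_i v(\psi_i),\,0\big)$ for every truth assignment $v$. Suppose for a suitable $n$ we manage to show $\vdash\theta$. Then $\Sigma\vdash\varphi$ follows by $nk$ applications of \emph{modus ponens}: writing $\theta=\theta'\dotminus\psi_k$, from the proven $\theta'\dotminus\psi_k$ and the premise $\psi_k\in\Sigma$ the rule yields $\theta'$, and iterating peels off one occurrence of a premise at a time until $\varphi$ remains. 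So it suffices to produce $n$ with $\vDash\theta$ and then to pass from validity to provability.

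The heart of the matter is producing $n$ semantically. Only finitely many atoms $P_1,\dots,P_m$ occur in $\varphi,\psi_1,\dots,\psi_k$, and a truth assignment is determined on them by a point $x$ of the cube $[0,1]^{m}$; write $v_x$ for the corresponding assignment. By induction on formulae, $x\mapsto v_x(\chi)$ is continuous and piecewise linear on $[0,1]^{m}$ (atoms give coordinate projections, $\dotminus$ gives $\max(\cdot-\cdot,0)$, $\neg$ gives $1-\cdot$, and $\frac12$ gives scaling by $\tfrac12$). Put $f(x)=v_x(\varphi)$ and $g(x)=\sum_i v_x(\psi_i)$, both continuous, piecewise linear, and nonnegative. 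Because $\Sigma\vDash\varphi$, the zero set of $g$ (exactly the models of $\Sigma$, since $g(x)=0$ forces every $v_x(\psi_i)=0$) is contained in the zero set of $f$. I claim there is an integer $n$ with $f\le n g$ everywhere. Decompose $[0,1]^{m}$ into finitely many polytopes on each of which $f$ and $g$ are affine; on a polytope an affine function is nonnegative precisely when it is nonnegative at every vertex, so it suffices to take $n$ exceeding $f(x)/g(x)$ at each vertex $x$ with $g(x)>0$, the remaining vertices having $g(x)=0$ and hence, by the inclusion of zero sets, $f(x)=0$. For this $n$ we get $v(\theta)=0$ for every assignment, i.e. $\vDash\theta$.

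It remains to pass from $\vDash\theta$ to $\vdash\theta$, which is weak completeness for a single formula of continuous propositional logic: for Łukasiewicz logic this is exactly Fact~\ref{fact:weak}, and the continuous case is its natural analogue once the halving operation is accounted for (recovered from Fact~\ref{fact:weak} by eliminating $\frac12$, in the spirit of the completeness results of \cite{BenYaacov:RandomVariables}). The main obstacle is the bound of the second paragraph --- turning the qualitative inclusion of zero sets into the uniform, integer-multiplier inequality $f\le n g$. Mere continuity is not enough, since the ratio $f/g$ could be unbounded near a tangential contact of the two zero sets; it is the piecewise-linear, finitely-many-pieces structure of truth values that forces a single integer $n$ to work. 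This is precisely the ingredient that has no infinite analogue, matching the failure witnessed by $\Sigma=\{P\dotminus 2^{-n}:n<\omega\}$, where the contact of the zero sets is approached only in the limit and no finite multiplier suffices.
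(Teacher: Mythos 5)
Your argument is correct, but there is nothing in the paper to compare it against: the statement is recorded in \S\ref{sec:bbt} as a black-box \emph{Fact}, with no proof given and the result attributed to \cite{BenYaacov:RandomVariables}, \cite{Hajek:FuzzyLogic}, and \cite{CDM:2000}. What you have written is essentially the standard argument from that literature, namely a reduction of the finite-premise case to the zero-premise case. The two reductions are sound: the identity $v(\varphi\dotminus n\psi_1\dotminus\cdots\dotminus n\psi_k)=\max\bigl(v(\varphi)-n\sum_i v(\psi_i),0\bigr)$ checks out by the two-case induction on $\dotminus$, the peeling-off by \emph{modus ponens} is exactly the rule as stated, and the piecewise-linear multiplier lemma is the genuine content --- your vertex argument is correct, granting the (standard, but unproved here) existence of a common polyhedral subdivision of $[0,1]^m$ on which $f$ and $g$ are simultaneously affine, which follows by induction on formulae since each $\max$ introduces only finitely many hyperplane splits. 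The one step you have not made self-contained is the passage from $\vDash\theta$ to $\vdash\theta$: this is Fact~\ref{fact:finite} itself in the case $\Sigma=\emptyset$, i.e.\ weak completeness for \emph{continuous} propositional logic, and it does not follow from Fact~\ref{fact:weak} as stated (which covers only the Łukasiewicz fragment) nor from Facts~\ref{fact:contcomp} or~\ref{fact:approx} (the latter yields only $\vdash\theta\dotminus 2^{-n}$ for all $n$). Your parenthetical about eliminating $\frac12$ is the right idea --- one treats each $\frac12\chi$ as a fresh atom constrained by (A5) and (A6) and invokes Łukasiewicz weak completeness --- but as written it is a deferral to \cite{BenYaacov:RandomVariables} rather than a proof; since the paper defers the entire Fact to the same source, this is a reasonable stopping point, but you should state explicitly that this is the one ingredient you are importing. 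Your closing remark correctly locates why the argument has no infinite analogue.
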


In view of the above facts, observe that if $\mathcal{S}$ is a Łukasiewicz or continuous propositional logic and $\Sigma\subseteq \mathcal{S}$,
\begin{itemize}
\item $\Sigma$ is inconsistent if and only if $\Sigma\vdash^{\tiny\L} 1$.
 \item $\Sigma$ is inconsistent if and only if $\Sigma\vdash^{\tiny\text{C}\L}\!\!\ d$ for some $d\in\mathbb{D}\backslash\{0\}$.
 \end{itemize}

 Fact \ref{fact:lukcomp} -- Fact \ref{fact:finite} have been established independently by the first author in \cite{BenYaacov:RandomVariables}. Nonetheless, Fact \ref{fact:lukcomp} has been proved in \cite{BelluceChang:1963} (indeed, for Łukasiewicz first-order logic; see also \cite{Hajek:FuzzyLogic}) and Fact \ref{fact:approx} has a counterpart in \cite{Hay:1963} (again, see also \cite{Hajek:FuzzyLogic}), while  Fact \ref{fact:finite} has been proved for rational Pavelka propositional logic \cite{Hajek:FuzzyLogic}  and for Łukasiewicz propositional logic \cite{CDM:2000,Hajek:FuzzyLogic}.

\section{The Language and Semantics of Continuous First-Order Logic}
\label{sec:sem}

\begin{defn} The \emph{logical symbols} of continuous first-order logic are
\mbox{}
\begin{itemize}
\item Parentheses: ( , )
\item Connectives: $\dotminus\, ,\, \neg\,,\, \frac12$
\item Quantifiers: $\sup$ $(\text{or}\, \inf)$
\item Variables: $v_{0},v_{1},\ldots$
\item An optional binary metric: $d$
\end{itemize}
\end{defn}
The $\sup$-quantifier plays the role of the $\forall$-quantifier from classical first-order logic, whereas the $\inf$-quantifier plays the role of the $\exists$-quantifier from classical first-order logic.  In fact, the $\sup$-quantifier can be defined in terms of the $\inf$-quantifier. Furthermore, instead of an optional binary congruence relation symbol $\approx$ as in classical first-order logic,  continuous first-order logic has an optional binary metric symbol $d$. 

We should now say something about the choice of connectives.  In classical first-order logic, a set of connectives is \emph{complete} if every mapping  from $\{0, 1\}^{n}\to\{0, 1\}$ can be written using the
set of connectives.  The set of connectives $\{\neg,\to\}$ is complete in this sense.  Similarly, in continuous first-order logic, a set of connectives is \emph{complete} if every  continuous mapping from $[0, 1]^{n} \to [0, 1]$ can be written using the set of connectives.   But this notion is much too demanding.  Indeed, a complete set of connectives would require continuum many connectives.  A more reasonable demand of a set of connectives is that every continuous function from $[0, 1]^{n} \to [0, 1]$ can be written using the set of connectives \emph{up to} arbitrarily better approximations.   A set of connectives satisfying this requirement is said to be \emph{full} (see \cite{BenYaacov-Usvyatsov:CFO}; see also \cite{CDM:2000}).  The set of connectives $\{\dotminus,\neg,\frac{1}{2}\}$ is full in this sense, and  $\{\dotminus,\frac{1}{2},0,1\}$ is full as well --- so is any set of connectives which includes $\dotminus$, $1$, and some dense set $\mathcal{C}\subseteq [0,1]$, such as $\mathbb{D}$ or $\Q\cap[0,1]$.  Here we choose to canonize $\{\dotminus,\neg,\frac{1}{2}\}$ as the set connectives.  As it is finite, this set of connectives is an economical choice,  and it is the analogue of the popular choice of connectives $\{\neg,\to\}$ of classical first-order logic.
\begin{defn}
A \emph{continuous signature} is a quadruple $\mathcal{L}=(\mathcal{R},\mathcal{F}, \mathcal{G},n)$ such that
\mbox{}
\begin{itemize}
\item[(i)] $\mathcal{R}\neq\emptyset$.
\item[(ii)] $\mathcal{R}\cap\mathcal{F}=\emptyset$.
\item[(iii)] $\mathcal{G}$ has the form $\{\delta_{s,i}:(0,1]\to (0,1]: s\in\mathcal{R}\cup\mathcal{F}\text{ and }i<n_{s}\}$.
\item[(iv)] $n:\mathcal{R}\cup\mathcal{F}\to\omega$.  
\end{itemize}
Members of $\mathcal{R}$ are called \emph{relation symbols}, while members of $\mathcal{F}$ are called \emph{function symbols}.  For each $s\in\mathcal{R}\cup\mathcal{F}$, we write $n_{s}$ for the value of $s$ under $n$ and call $n_{s}$ the $arity$ of $s$.  We call a member of $\mathcal{G}$ a \emph{modulus of  uniform continuity}.
\end{defn}

\begin{rem}Observe that members of $\mathcal{G}$ are not syntactic objects. Rather, each member of $\mathcal{G}$ is a genuine operation on $(0,1]$.  The role of the moduli of uniform continuity will be clarified below.
\end{rem}

\begin{defn} A \emph{continuous signature with a metric} is a continuous signature with a distinguished binary relation symbol $d$.\end{defn}

Terms, formulae, and notions of free and bound substitution are defined in the usual way (see $\mathsection\ref{sec:sub}$ concerning these notions).  We denote the set of variables by $V$ and the set of formulae by $\textup{For}(\mathcal{L})$. In classical first-order logic, a metric symbol $d$ would most naturally be thought of as a function symbol.  In continuous first-order logic, however, $d$ is a \emph{relation} symbol, and as such, $dt_{0}t_{1}$ is a formula rather than a term.  

 We write `$\sup_{x}$' (and `$\inf_{x}$') instead of `$\sup x$' (and `$\inf x$').  We define `$\inf$,' `$\wedge$,' `$\vee$,' and `$|\, |$' by setting
 \begin{eqnarray*}
 \inf\!\!\,_{x}\,\varphi&\equals&\neg\sup\!\!\,_{x}\,\neg\varphi.\\
 \varphi\wedge\psi&\equals&\varphi\dotminus(\varphi\dotminus\psi).\\
  \varphi\vee\psi&\equals&\neg(\neg\varphi\wedge\neg\psi).\\
|\varphi-\psi|&\equals&(\varphi\dotminus\psi)\vee(\psi\dotminus\varphi).
 \end{eqnarray*}
The reader should verify that the definition of `$\inf$' accords with the obvious intended interpretation when the semantics are given below.   Observe that (A2) becomes `$(\varphi\wedge\psi)\dotminus(\psi\wedge\varphi)$.'  Also, according to the intended interpretation of `$\dotminus$,' the interpretation of `$\wedge$' is given by $x\wedge y=\min(x,y)$. Thus, according to the interpretation of `$\wedge$,' the interpretation of `$\vee$' is given by $x\vee y=\max(x,y)$, whereby the interpretation of `$|\,| $' is one-dimensional Euclidean distance. As 0 corresponds to truth, it should be clear that the interpretation of `$\wedge$' is the continuous analogue of the classical `or,' whereas the interpretation of `$\vee$' is the continuous analogue of the classical `and.' 
\begin{defn}
\mbox{}
\begin{itemize} 
\item[(i)] Let $(M,d)$ and $(M',d')$ be metric spaces and $f:M\to M'$ be a function.  We say that $\delta :(0,1]\to (0,1]$ is a \emph{modulus of uniform continuity for f} if for each $\epsilon\in(0,1]$ and all $a,b\in M$, $d(a,b)<\delta(\epsilon)\Longrightarrow d^{\prime}(f(a),f(b))\leq\epsilon$.
\item[(ii)] We say that $f$ is \emph{uniformly continuous} if there is a modulus of uniform continuity for $f$.
\end{itemize}
\end{defn}

\begin{defn} 
\label{defn:structure}Let $\mathcal{L}$ be a continuous signature.  A \emph{continuous} $\mathcal{L}$-\emph{pre}-\emph{structure} is an ordered pair $\mathfrak{M}=(M,\rho)$, where $M$ is a non-empty set and $\rho$ is a function whose domain is $\mathcal{R}\cup\mathcal{F}$, such that:
\mbox{}
\begin{itemize}   
\item[(i)] To each $f\in\mathcal{F}$, $\rho$ assigns a mapping $f^{\mathfrak{M}}: M^{n_{f}}\to M$.
\item[(ii)] To each $P\in\mathcal{R}$, $\rho$ assigns a mapping $P^{\mathfrak{M}}: M^{n_{P}}\to [0,1]$.
\end{itemize}
If $\mathcal{L}$ is also a continuous signature with a metric, then $\mathfrak{M}$ is such that:
\begin{itemize}
\item[(iii)] $\rho$ assigns to $d$ a pseudo-metric $d^{\mathfrak{M}}: M\times M\to [0,1]$.   
\item[(iv)] For each $f\in\mathcal{F}$, $i<n_{f}$, and  $\epsilon\in(0,1]$, $\delta_{f,i}$ satisfies the following condition:\\
$\forall \bar{a},\bar{b},c,e\,\,[d^{\mathfrak{M}}(c,e)<\delta_{f,i}(\epsilon)\Longrightarrow d^{\mathfrak{M}}(f^{\mathfrak{M}}(\bar{a},c,\bar{b}),f^{\mathfrak{M}}(\bar{a},e,\bar{b}))\leq\epsilon]$,\\ where $|\bar{a}|=i$ and $|\bar{a}| +|\bar{b}| +\, 1=n_{f}$; we thereby call $\delta_{f,i}$ the \emph{uniform continuity modulus of f with respect to the }$i$\text{th }\emph{argument}.

\item[(v)] For each $P\in\mathcal{R}$, $i<n_{P}$, and $\epsilon\in(0,1]$, $\delta_{P,i}$ satisfies the following condition:\\
$\forall \bar{a},\bar{b},c,e\,\,[d^{\mathfrak{M}}(c,e)<\delta_{P,i}(\epsilon)\Longrightarrow\! \max(P^{\mathfrak{M}}(\bar{a},c,\bar{b}) - P^{\mathfrak{M}}(\bar{a},e,\bar{b}), 0) \leq\epsilon]$,\\ where $|\bar{a}|=i$ and $|\bar{a}| +|\bar{b}| +\, 1=n_{P}$; we thereby call $\delta_{P,i}$ the \emph{uniform continuity modulus of P with respect to the }$i$\text{th }\emph{argument}. 
\end{itemize}   
\end{defn}
Here, as well as in the remainder of this paper, a tuple $a_{0},\ldots,a_{n-1}$ may be for convenience denoted by $\bar{a}$, whereby $|\bar{a}|$ will denote the length of $\bar{a}$.

\begin{con} In this paper we make the convention that for an $n$-ary function $f$, the $i$th argument of the expression $f(x_{0},\ldots,x_{n-1})$ is $x_{i}$.  In particular, the $\textit{0}\,$th argument of $f$ is $x_{0}$. 
\end{con}

\begin{rem}\label{rem:congruence} Conditions (iv) and (v) correspond to the congruence axioms of classical first-order logic. In classical first-order logic it is required that a distinguished binary relation symbol $\approx$ be such that for each classical structure $\mathfrak{A}$, the relation $\approx^{\mathfrak{A}}$ is an equivalence relation satisfying the following congruence axioms:
\mbox{}
\begin{itemize}\item For each function symbol $f$ and $i<n_{f}$, \\ $\forall \bar{a},\bar{b},c,e\,\,
(c\approx^{\mathfrak{A}} e\Longrightarrow f^{\mathfrak{A}}(\bar{a},c,\bar{b})\approx^{\mathfrak{A}}f^{\mathfrak{A}}(\bar{a},e,\bar{b}))$,\\
where $|\bar{a}|=i$ and $|\bar{a}| +|\bar{b}| +\, 1=n_{f}$.
\item For each relation symbol $P$ and $i<n_{P}$, \\ $\forall \bar{a},\bar{b},c,e\,\,
(c\approx^{\mathfrak{A}} e\Longrightarrow(P^{\mathfrak{A}}(\bar{a},c,\bar{b})\Rightarrow P^{\mathfrak{A}}(\bar{a},e,\bar{b})))$,\\
where $|\bar{a}|=i$ and $|\bar{a}| +|\bar{b}| +\, 1=n_{P}$.
\end{itemize} 
 In classical first-order logic, one can thereby show that for each classical structure $\mathfrak{A}$, the relation $\approx^{\mathfrak{A}}$ satisfies the following properties:
 \begin{itemize}\item For each function symbol $f$, \\ $\forall \bar{a},\bar{b}\,\,
((\bigwedge_{i<n_{f}}a_{i}\approx^{\mathfrak{A}} b_{i})\Longrightarrow f^{\mathfrak{A}}(\bar{a})\approx^{\mathfrak{A}}f^{\mathfrak{A}}(\bar{b}))$,\\ 
where $|\bar{a}|=|\bar{b}|=n_{f}$.
\item For each relation symbol $P$, \\ $\forall \bar{a},\bar{b}\,\,
((\bigwedge_{i<n_{P}}a_{i}\approx^{\mathfrak{A}} b_{i})\Longrightarrow(P^{\mathfrak{A}}(\bar{a})\Rightarrow P^{\mathfrak{A}}(\bar{b})))$,\\
where $|\bar{a}|=|\bar{b}|=n_{P}$.
\end{itemize} 
 Along a similar vein, in continuous first-order logic one can show that for each function symbol $f$ and predicate symbol $P$, there are moduli of uniform continuity $\Delta_{f}$ and $\Delta_{P}$ which depend on their uniform continuity moduli $(\delta_{f,i}:i<n_{f})$ and $(\delta_{P,i}:i<n_{P})$, respectively.  For example, $\Delta_{f}:(0,1]\to(0,1]$ may be defined by setting $\Delta_{f}(\epsilon)\equals\min\{\delta_{f,0}(\epsilon/n_{f}),\ldots,\delta_{f,n_{f}-1}(\epsilon/n_{f})\}$.  Accordingly, in each continuous $\mathcal{L}$-pre-structure $\mathfrak{M}$, the mapping $\Delta_{f}$ is a modulus of uniform continuity for $f^{\mathfrak{M}}$ with respect to the maximum metric $D^\mathfrak{M}_{n_{f}}$ defined by 
 $D^\mathfrak{M}_{n_{f}}(\bar{a},\bar{b})\equals\max_{i<n_{f}}(d^{\mathfrak{M}}(a_{i},b_{i}))=\bigvee_{i<n_{f}}d^{\mathfrak{M}}(a_{i},b_{i})$.  This may be expressed formally as follows:
 \begin{itemize}
 \item For each $f\in\mathcal{F}$ and  $\epsilon\in(0,1]$, $\Delta_{f}$ satisfies the following condition:\\
$\forall \bar{a},\bar{b}\,\,[(\bigvee_{i<n_{f}}d^{\mathfrak{M}}(a_{i},b_{i}))<\Delta_{f}(\epsilon)\Longrightarrow d^{\mathfrak{M}}(f^{\mathfrak{M}}(\bar{a}),f^{\mathfrak{M}}(\bar{b}))\leq\epsilon]$,\\
where $|\bar{a}|=|\bar{b}|=n_{f}$.
\end{itemize}
In light of the foregoing discussion, observe that for every continuous $\mathcal{L}$-pre-structure $\mathfrak{M}$ and $s\in \mathcal{R}\cup\mathcal{F}$, $s^{\mathfrak{M}}$ is a uniformly continuous function.
\end{rem}

 \begin{defn}
A \emph{continuous} $\mathcal{L}$-\emph{structure} is a continuous $\mathcal{L}$-pre-structure $\mathfrak{M}=(M,\rho)$ such that $\rho$ assigns to $d$ a \emph{complete} metric: 
\mbox{}
\begin{itemize}
\item[(i)] $\forall a,b \,\,(d^{\mathfrak{M}}(a,b)=0\Longrightarrow a=b)$.
\item[(ii)] Every Cauchy sequence converges. 
\end{itemize}
\end{defn}

\begin{defn} 
\mbox{}
\begin{itemize} 
\item[(i)] If $\mathfrak{M}$ is a continuous $\mathcal{L}$-pre-structure, then an $\mathfrak{M}$-\emph{assignment} is a mapping $\sigma: V\to M$.
\item[(ii)] If $\sigma$ is an $\mathfrak{M}$-assignment, $x\in V$, and $a\in M$, we define an $\mathfrak{M}$-assignment $\sigma^{a}_{x}$ by setting for all $y\in V$,\begin{eqnarray*}
\sigma^{a}_{x}(y)&\equals&\begin{cases} a & \text{if}\,\,\,\, x=y\\
 \sigma(y)& \text{otherwise}.
 \end{cases}
\end{eqnarray*}
\end{itemize}   
\end{defn}
The interpretation of a term $t$ in a continuous $\mathcal{L}$-pre-structure $\mathfrak{M}$ is defined as in classical first-order logic.  We denote its interpretation by $t^{\mathfrak{M},\sigma}$.

\begin{defn} Let $\mathfrak{M}$ be a continuous $\mathcal{L}$-pre-structure.  For a formula $\varphi$ and an $\mathfrak{M}$-assignment $\sigma$, we define \emph{the value of} $\varphi$ \emph{in} $\mathfrak{M}$ \emph{under} $\sigma$, $\mathfrak{M}(\varphi,\sigma)$, by induction:
\begin{itemize}
\item[(i)] $\mathfrak{M}(Pt_{0}\cdots t_{n_{P}-1},\sigma)\equals P^{\mathfrak{M}}(t^{\mathfrak{M},\sigma}_{0},\ldots,t^{\mathfrak{M},\sigma}_{n_{P}-1})$.
\item[(ii)] $\mathfrak{M}(\alpha\dotminus\beta,\sigma)\equals\max(\mathfrak{M}(\alpha,\sigma)-\mathfrak{M}(\beta,\sigma), 0)$.
\item[(iii)] $\mathfrak{M}(\neg\alpha,\sigma)\equals1-\mathfrak{M}(\alpha,\sigma)$.
\item[(iv)] $\mathfrak{M}(\frac12\alpha,\sigma)\equals\frac12\mathfrak{M}(\alpha,\sigma)$.
\item[(v)] $\mathfrak{M}(\sup_{x}\alpha,\sigma)\equals\sup\{\mathfrak{M}(\alpha,\sigma^{a}_{x}): a\in M\}$.
\end{itemize}
\end{defn}
If one chooses to use `$\inf$' instead of `$\sup$,' one may replace (v) with $\mathfrak{M}(\inf_{x}\alpha,\sigma)\equals\inf\{\mathfrak{M}(\alpha,\sigma^{a}_{x}): a\in M\}$.

\begin{defn} Let $\mathfrak{M}$ be a continuous $\mathcal{L}$-pre-structure, let $\sigma$ be an $\mathfrak{M}$-assignment, and let $\Gamma\subseteq\textup{For}(\mathcal{L})$.
\mbox{}
\begin{itemize} 
\item[(i)] We say that $(\mathfrak{M},\sigma)$ \emph{models} (or \emph{satisfies}) $\Gamma$ and that $(\mathfrak{M},\sigma)$ is a \emph{model} of $\Gamma$, written $(\mathfrak{M},\sigma)\vDash_{Q}\Gamma$, if $\mathfrak{M}(\varphi,\sigma)=0$ for all $\varphi\in\Gamma$.  We of course say that $(\mathfrak{M},\sigma)$ is a \emph{model} of $\varphi$ and write $(\mathfrak{M},\sigma)\vDash_{Q}\varphi$ if $(\mathfrak{M},\sigma)\vDash_{Q}\{\varphi\}$.   
\item[(ii)] We say that $\Gamma$ is \emph{satisfiable} if it has a model.
\end{itemize}
\end{defn}

\begin{defn}
Let $\Gamma$ be a set of formulae and $\varphi$ be a formula.  We write $\Gamma\vDash_{Q}\varphi$ if every model of $\Gamma$ is a model of $\varphi$.   If $\emptyset\vDash_{Q}\varphi$, we say that $\varphi$ is \emph{valid}.
\end{defn}
\begin{defn}
We write $\varphi\equiv\psi$ if $\mathfrak{M}(\varphi,\sigma)=\mathfrak{M}(\psi,\sigma)$ for every continuous $\mathcal{L}$-pre-structure and $\mathfrak{M}$-assignment $\sigma$.
\end{defn} 
 With these definitions, many properties analogous to those of classical first-order logic can be derived (see \cite{Ebbinghaus-Flum-Thomas:MathematicalLogic}).

\section{Substitution and Metric Completions}
\label{sec:sub}
\subsection*{Substitution}As in classical fist-order logic, \emph{free} and \emph{bound} substitution play an important role in connecting the syntax with the semantics.  This brief subsection is intended to remind the reader of these two notions of substitution and to indicate to the reader what features of these notions are crucial for our purposes.
\begin{defn}
Let $t$ be a term, and let $x$ be a variable.  We define the \emph{free substitution} of $t$ for $x$ inside a formula $\varphi$, $\varphi[t/x]$, as the result of replacing $x$ by $t$ in $\varphi$ if $x$ occurs free in $\varphi$. We say that $\varphi[t/x]$ is \emph{correct} if no variable $y$ in $t$ is captured by a $\sup_{y}$ (or $\inf_{y}$) quantifier in $\varphi[t/x]$. 
\end{defn}

The next lemma is the continuous analogue of the substitution lemma of classical first-order logic.

\begin{lem}[Substitution Lemma]\label{lem:inter}Let $\mathfrak{M}$ be a continuous $\mathcal{L}$-pre-structure, and let $\sigma$ be an $\mathfrak{M}$-assignment.  Let $t$ be a term, $x$ be a variable, and $\varphi$ be a formula.  Put $a\equals t^{\mathfrak{M},\sigma}$.  Suppose $\varphi[t/x]$ is correct.  Then 
\begin{eqnarray*}
\mathfrak{M}(\varphi[t/x],\sigma)&=&\mathfrak{M}(\varphi,\sigma^{a}_{x}).
\end{eqnarray*}
\end{lem}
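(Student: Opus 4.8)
The plan is to prove this by induction on the structure of the formula $\varphi$, following the standard template from classical first-order logic but adapting each case to the continuous semantics given above. The key point to keep track of is the correctness hypothesis: because $\varphi[t/x]$ is correct, no variable occurring in $t$ gets captured by a quantifier, which is exactly what licenses the interaction between substitution and the assignment $\sigma^{a}_{x}$ in the quantifier case.

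First I would establish the analogous statement at the level of terms, namely that $(s[t/x])^{\mathfrak{M},\sigma} = s^{\mathfrak{M},\sigma^{a}_{x}}$ for every term $s$, where $a = t^{\mathfrak{M},\sigma}$. This is a routine induction on $s$: the base cases are a variable (either $x$, where both sides evaluate to $a$, or another variable $y$, where both sides give $\sigma(y)$) and the inductive step passes through function symbols using the recursive definition of term interpretation. Since this is the term-level warm-up and the excerpt says term interpretation is defined as in the classical case, I would treat it briefly.

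Next comes the main induction on $\varphi$, using the five clauses of the definition of $\mathfrak{M}(\varphi,\sigma)$. The atomic case $\varphi = Pt_{0}\cdots t_{n_{P}-1}$ reduces immediately to the term lemma applied to each $t_{j}$, together with clause (i). The connective cases $\dotminus$, $\neg$, and $\tfrac12$ are entirely mechanical: substitution commutes with the formation of these connectives, and the semantic clauses (ii)--(iv) are defined pointwise, so the induction hypothesis on the immediate subformulae transfers directly. The only subtlety worth flagging is that if $x$ does not occur free in $\varphi$ then $\varphi[t/x] = \varphi$ and $\sigma^{a}_{x}$ agrees with $\sigma$ on all free variables of $\varphi$, so both sides coincide trivially; I would note this so that the quantifier case can assume $x$ occurs free.

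The main obstacle, as usual, is the quantifier clause (v), say $\varphi = \sup_{y}\psi$. If $y = x$ then $x$ is not free in $\varphi$ and we fall into the trivial case above. If $y \neq x$, correctness guarantees that $y$ does not occur in $t$, so for every $b \in M$ we have $t^{\mathfrak{M},\sigma^{b}_{y}} = t^{\mathfrak{M},\sigma} = a$, and moreover $(\sup_{y}\psi)[t/x] = \sup_{y}(\psi[t/x])$ with $\psi[t/x]$ still correct. Applying the induction hypothesis to $\psi$ relative to the assignment $\sigma^{b}_{y}$, and using that $\sigma^{b}_{y}$ and $\sigma^{a}_{x}$ commute as operations (since $x \neq y$), I would get
\begin{eqnarray*}
\mathfrak{M}(\psi[t/x],\sigma^{b}_{y}) &=& \mathfrak{M}\bigl(\psi,(\sigma^{b}_{y})^{a}_{x}\bigr) \;=\; \mathfrak{M}\bigl(\psi,(\sigma^{a}_{x})^{b}_{y}\bigr).
\end{eqnarray*}
Taking the supremum over $b \in M$ on both sides and invoking clause (v) then yields $\mathfrak{M}(\sup_{y}\psi[t/x],\sigma) = \mathfrak{M}(\sup_{y}\psi,\sigma^{a}_{x})$, completing the induction. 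The genuinely load-bearing facts are that correctness forces $y \notin t$ (so $a$ is unchanged as $b$ varies) and the commutation of the two assignment updates, and these are precisely the places where I would be careful rather than terse.
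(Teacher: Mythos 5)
Your proof is correct: the paper states Lemma \ref{lem:inter} without proof, deferring to the standard classical argument, and your induction on terms and then on formulae (with the quantifier case handled via the observation that correctness forces the bound variable not to occur in $t$, so that $t^{\mathfrak{M},\sigma^{b}_{y}}=a$ for all $b$, together with the commutation $(\sigma^{b}_{y})^{a}_{x}=(\sigma^{a}_{x})^{b}_{y}$) is exactly that argument adapted to the continuous semantics. Nothing is missing; the only implicit auxiliary fact you use, that $\mathfrak{M}(\varphi,\sigma)$ depends only on $\sigma$ restricted to the free variables of $\varphi$, is itself a routine induction of the same kind.
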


\begin{defn}  Let $\varphi$ be a formula, and let $x,y$ be variables.  We define the \emph{bound substitution} of $y$ for $x$ inside $\varphi$, $\varphi\{y/x\}$, as the result of replacing each subformula $\sup_{x}\alpha$ (or $\inf_{x}\alpha$) of $\varphi$ by $\sup_{y}\alpha[y/x]$ (or $\inf_{y}\alpha$).  We say that  $\varphi\{y/x\}$ is \emph{correct} if $y$ is not free in $\alpha$ and $\alpha[y/x]$ is correct  in each such subformula $\sup_{x}\alpha$ (or $\inf_{x}\alpha$).
\end{defn}

The following lemma is an immediate result. The reader interested may consult \cite{Ebbinghaus-Flum-Thomas:MathematicalLogic} for a classical first-order proof to see how a proof for continuous first-order logic may be constructed.

\begin{lem}[Bound Substitution Lemma]\label{lem:bdsub}
Let $\varphi$ be a formula, and let $x_{0},\ldots,x_{n-1}$ be a finite sequence of variables.  Then by a sequence of bound substitutions there is a formula $\varphi'$ in which $x_{0},\ldots,x_{n-1}$ are not bound and $\varphi\equiv\varphi'$.
\end{lem}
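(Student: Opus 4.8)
The plan is to reduce the lemma to the single-variable case together with the one semantic fact that makes bound substitution harmless, namely that a \emph{correct} bound substitution never changes the value of a formula. Concretely, I would first establish the auxiliary claim that if $\varphi\{y/x\}$ is correct then $\varphi\equiv\varphi\{y/x\}$, and then unbind the variables $x_0,\dots,x_{n-1}$ one at a time by renaming each to a fresh variable via such a substitution, chaining the resulting equivalences by transitivity of $\equiv$.

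For the auxiliary claim I would argue by induction on the structure of $\varphi$. The connective cases ($\alpha\dotminus\beta$, $\neg\alpha$, $\frac12\alpha$) are immediate: bound substitution distributes over the connective, correctness is inherited by the immediate subformulas, and since $\mathfrak{M}(\cdot,\sigma)$ is computed pointwise from the values of the subformulas, the induction hypothesis gives $\varphi\equiv\varphi\{y/x\}$. For a quantifier $\varphi=\sup_z\alpha$ with $z\neq x$ the binder is untouched, so $\varphi\{y/x\}=\sup_z(\alpha\{y/x\})$, and from $\alpha\equiv\alpha\{y/x\}$ one gets $\sup_z\alpha\equiv\sup_z\alpha\{y/x\}$ because the value of a $\sup$ under $\sigma$ depends only on the values of its matrix under the assignments $\sigma^a_z$. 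The only genuine case is $\varphi=\sup_x\alpha$ (and the dual $\inf_x\alpha$), where the binder is renamed and $\varphi\{y/x\}$ has the form $\sup_y\alpha'$ with $\alpha'$ built from $\alpha[y/x]$. Using correctness ($y$ not free in $\alpha$, and $\alpha[y/x]$ correct) together with the Substitution Lemma (Lemma \ref{lem:inter}) applied with the term $t=y$, one computes for each $a\in M$
\begin{equation*}
\mathfrak{M}(\alpha[y/x],\sigma^a_y)=\mathfrak{M}(\alpha,(\sigma^a_y)^a_x)=\mathfrak{M}(\alpha,\sigma^a_x),
\end{equation*}
the last equality holding because $(\sigma^a_y)^a_x$ and $\sigma^a_x$ differ only at $y$, which is not free in $\alpha$, so by the standard coincidence fact (that $\mathfrak{M}(\alpha,\sigma)$ depends only on the values $\sigma$ assigns to the free variables of $\alpha$, a routine induction) the value is unchanged. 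Taking suprema over $a$ yields $\mathfrak{M}(\sup_x\alpha,\sigma)=\mathfrak{M}(\sup_y\alpha[y/x],\sigma)$. This semantic step is the main obstacle, both because it is where the correctness hypothesis is truly used and because the interaction between the free substitution $[y/x]$ and the recursive renaming of any nested $\sup_x$ occurring inside $\alpha$ must be tracked carefully; choosing $y$ fresh is exactly what keeps every such nested free substitution correct and lets the induction hypothesis and the Substitution Lemma combine cleanly.

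With the auxiliary claim in hand the lemma follows by induction on $n$. For $n=1$, given $\varphi$ and $x$, I would pick a variable $y$ occurring nowhere in $\varphi$ and distinct from $x$; then $\varphi\{y/x\}$ is correct (freshness makes $y$ non-free in every matrix and prevents any capture in the free substitutions), so $\varphi\equiv\varphi\{y/x\}$ by the claim, and $x$ no longer occurs bound, since every subformula $\sup_x(\cdots)$ has had its binder renamed to $\sup_y$. For the inductive step I would first unbind $x_{n-1}$ as in the base case using a fresh $y_{n-1}$, and then apply the induction hypothesis to the resulting formula to unbind $x_0,\dots,x_{n-2}$ using fresh variables chosen distinct from $y_{n-1}$; since these later substitutions touch neither $x_{n-1}$ nor $y_{n-1}$, the variable $x_{n-1}$ stays unbound. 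Transitivity of $\equiv$ then gives $\varphi\equiv\varphi'$, the whole transformation is by definition a sequence of bound substitutions, and no $x_i$ occurs bound in $\varphi'$, as required.
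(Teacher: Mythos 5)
Your proof is correct and is exactly the argument the paper has in mind: the paper states the lemma without proof, deferring to the classical renaming-of-bound-variables argument in Ebbinghaus--Flum--Thomas, and your write-up supplies precisely that argument adapted to the continuous semantics (induction on $\varphi$, with the Substitution Lemma plus a coincidence lemma handling the one nontrivial case $\sup_x\alpha$, then iterated fresh renamings). You also correctly identify where the correctness hypothesis is genuinely used, so there is nothing to fix.
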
 
\subsection*{Metric Completions}
We will presently see that each continuous $\mathcal{L}$-pre-structure (for a continuous signature with a metric) is virtually indistinguishable from its metric completion (Theorem \ref{thm:completion}).  To this end, we first offer a definition.

\begin{defn} Let $\mathfrak{M}$ and $\mathfrak{N}$ be continuous $\mathcal{L}$-pre-structures, and let $h:M\to N$.  We call $h$ an $\mathcal{L}$-\emph{morphism} of $\mathfrak{M}$ into $\mathfrak{N}$ if $h$ satisfies the following two conditions:
\begin{itemize}
\item[(i)]  For each $f\in\mathcal{F}$ and all $a_{0},\ldots,a_{n_{f}-1}\in M$,
\begin{eqnarray*}
h(f^{\mathfrak{M}}(a_{0},\ldots,a_{n_{f}-1}))&=&f^{\mathfrak{N}}(h(a_{0}),\ldots,h(a_{n_{f}-1})).
\end{eqnarray*}
\item[(ii)] For each $P\in\mathcal{R}$ and all $a_{0},\ldots,a_{n_{P}-1}\in M$,
\begin{eqnarray*}
P^{\mathfrak{M}}(a_{0},\ldots,a_{n_{P}-1})&=&P^{\mathfrak{N}}(h(a_{0}),\ldots,h(a_{n_{P}-1})).
\end{eqnarray*}
\end{itemize}

We call $h$ an \emph{elementary} $\mathcal{L}$-\emph{morphism} if $h$ is an $\mathcal{L}$-morphism and for every $\mathfrak{M}$-assignment $\sigma$ and formula $\varphi$, $\mathfrak{M}(\varphi,\sigma)=\mathfrak{N}(\varphi,h\circ\sigma)$.
\end{defn}

Observe that if $\mathcal{L}$ is a continuous signature with a metric $d$ and $h$ is an $\mathcal{L}$-{morphism} of $\mathfrak{M}$ into $\mathfrak{N}$, then for all $a_{0},a_{1}\in M$,
\begin{eqnarray*}
d^{\mathfrak{M}}(a_{0},a_{1})&=&d^{\mathfrak{N}}(h(a_{0}),h(a_{1})).
\end{eqnarray*}
In other words, $h$ is an isometry.

 An immediate consequence of our definitions is the following theorem.

\begin{thm}\label{thm:homo} If $\mathfrak{M}$ and $\mathfrak{N}$ are continuous $\mathcal{L}$-pre-structures and $h:M\to N$ is a surjective $\mathcal{L}$-morphism, then $h$ is an elementary $\mathcal{L}$-morphism of $\mathfrak{M}$ onto $\mathfrak{N}$.
\end{thm}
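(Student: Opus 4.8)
The plan is to proceed by a routine structural induction, first on terms and then on formulae, with the surjectivity hypothesis entering only at the quantifier step. First I would establish the term-level analogue: for every term $t$ and every $\mathfrak{M}$-assignment $\sigma$, one has $h(t^{\mathfrak{M},\sigma}) = t^{\mathfrak{N}, h \circ \sigma}$. This goes by induction on the structure of $t$; the variable case is immediate from the definition of $h \circ \sigma$, and the case $t = f(t_0, \ldots, t_{n_f-1})$ follows by applying clause (i) of the definition of $\mathcal{L}$-morphism together with the inductive hypothesis on the $t_j$.

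With this in hand, I would prove by induction on formulae that $\mathfrak{M}(\varphi, \sigma) = \mathfrak{N}(\varphi, h \circ \sigma)$ for every $\sigma$. The atomic case $\varphi = P t_0 \cdots t_{n_P - 1}$ combines the term lemma just proved with clause (ii) of the morphism definition. The cases for $\dotminus$, $\neg$, and $\tfrac12$ are immediate, since the corresponding semantic clauses are given by the very same real-valued operations in both $\mathfrak{M}$ and $\mathfrak{N}$, so the inductive hypothesis propagates directly through these connectives.

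The only substantive step is the quantifier case $\varphi = \sup_{x} \alpha$, and this is precisely where surjectivity is essential. The key computation is that $h \circ \sigma^{a}_{x} = (h \circ \sigma)^{h(a)}_{x}$ for every $a \in M$, which I would verify by evaluating both sides at an arbitrary variable $y$ and splitting on whether $y = x$. Combined with the inductive hypothesis applied to $\alpha$, this gives $\mathfrak{M}(\alpha, \sigma^{a}_{x}) = \mathfrak{N}(\alpha, (h \circ \sigma)^{h(a)}_{x})$, so that
\[
\mathfrak{M}(\sup\!\,_{x}\alpha, \sigma) = \sup\{\mathfrak{N}(\alpha, (h \circ \sigma)^{h(a)}_{x}) : a \in M\}.
\]
Because $h$ is surjective, the family $\{h(a) : a \in M\}$ exhausts $N$, so the set of reals on the right coincides with $\{\mathfrak{N}(\alpha, (h \circ \sigma)^{b}_{x}) : b \in N\}$, whose supremum is by definition $\mathfrak{N}(\sup_{x}\alpha, h \circ \sigma)$. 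This closes the induction and shows that $h$ is elementary.

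I expect no real obstacle beyond bookkeeping. The one point to handle with care is exactly that it is \emph{surjectivity} --- not injectivity, and not the morphism conditions alone --- that guarantees the two suprema range over the same set of real values; this is why the hypothesis cannot be dropped, since an $\mathcal{L}$-morphism whose image misses points of $N$ could fail to preserve an $\inf$ or $\sup$.
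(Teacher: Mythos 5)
Your proposal is correct and is exactly the argument the paper has in mind: its proof of Theorem \ref{thm:homo} is simply ``straightforward induction on formulae,'' and your write-up supplies the standard details (the term-level lemma, propagation through the connectives, and the use of surjectivity at the $\sup$ step so that $\{h(a):a\in M\}$ exhausts $N$). No discrepancies to report.
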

\begin{proof}  Straightforward induction on formulae.
\end{proof}

\begin{defn} Let $\mathfrak{M}$ be a continuous $\mathcal{L}$-pre-structure.  Let $t$ be a term and $\varphi$ be a formula such that all variables occurring in $t$ and all free variables occurring in $\varphi$ appear among $n$ distinct variables $x_{0},\ldots,x_{n-1}$.
 Define functions $\widetilde{t}^{\,\mathfrak{M},\bar{x}}:M^{n}\to M$ and $\widetilde{\varphi}^{\,\mathfrak{M},\bar{x}}:M^{n}\to [0,1]$ by setting for all $\bar{a}\in M^{n}$, 
\begin{eqnarray*}
t^{\mathfrak{M},\bar{a}}&\equals& t^{\mathfrak{M},\sigma}\\
\widetilde{\varphi}^{\,\mathfrak{M},\bar{x}}(\bar{a})&\equals&\mathfrak{M}(\varphi,\sigma),
\end{eqnarray*}
where $\sigma$ is an $\mathfrak{M}$-assignment such that $\sigma(x_{i})=a_{i}$ for each $i<n$.
\end{defn}

 We have all of the ingredients necessary to prove the following theorem.

\begin{thm}
\label{thm:mod}
Let $\mathfrak{M}$ be a continuous $\mathcal{L}$-pre-structure. Then for every term $t$ and formula $\varphi$  the mappings $\widetilde{t}^{\,\mathfrak{M},\bar{x}}:M^{n}\to M$ and $\widetilde{\varphi}^{\,\mathfrak{M},\bar{x}}:M^{n}\to [0,1]$ are uniformly continuous.
\end{thm}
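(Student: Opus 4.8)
The plan is to prove both assertions simultaneously by induction on the structure of $t$ and $\varphi$, uniform continuity being measured with respect to the pseudometric $d^{\mathfrak M}$ on $M$ (we are of course in a signature with a metric, so that $d^{\mathfrak M}$ is available) and the induced maximum pseudometric $D^{\mathfrak M}_n$ on $M^n$. It is convenient to prove the statement for an \emph{arbitrary} tuple $\bar x$ of distinct variables that merely \emph{contains} all variables of $t$ (resp. all free variables of $\varphi$), not just for a minimal such tuple, since the quantifier step will force me to enlarge the tuple. I will repeatedly use three elementary facts, each immediate from the definitions: (1) the coordinate projections $M^n\to M$ and the constant maps are uniformly continuous; (2) a composition of uniformly continuous maps is uniformly continuous, and a map $\bar a\mapsto(h_0(\bar a),\dots,h_{m-1}(\bar a))$ into a product is uniformly continuous with respect to $D^{\mathfrak M}_m$ as soon as each $h_j$ is; and (3) the connectives act by $1$-Lipschitz functions on $[0,1]$, namely $(x,y)\mapsto\max(x-y,0)$, $x\mapsto1-x$, and $x\mapsto\tfrac12 x$, which are therefore uniformly continuous.

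The term cases and the atomic and connective cases are then routine. A variable $x_i$ gives the $i$th projection and a constant symbol gives a constant map, so the base cases hold by (1). If $t=f(t_0,\dots,t_{n_f-1})$, then $\widetilde t^{\,\mathfrak M,\bar x}=f^{\mathfrak M}\circ(\widetilde{t_0}^{\,\mathfrak M,\bar x},\dots,\widetilde{t_{n_f-1}}^{\,\mathfrak M,\bar x})$; each inner map is uniformly continuous by the induction hypothesis, while $f^{\mathfrak M}$ is uniformly continuous with respect to $D^{\mathfrak M}_{n_f}$ by Remark~\ref{rem:congruence} (via $\Delta_f$), so (2) closes this case. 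Atomic formulae $\varphi=Pt_0\cdots t_{n_P-1}$ are handled identically, using that $P^{\mathfrak M}$ is uniformly continuous (again Remark~\ref{rem:congruence}, via $\Delta_P$); and the cases $\varphi=\alpha\dotminus\beta$, $\varphi=\neg\alpha$, $\varphi=\frac12\alpha$ follow from (2) and (3) applied to $\widetilde\alpha^{\,\mathfrak M,\bar x}$ and $\widetilde\beta^{\,\mathfrak M,\bar x}$, which are uniformly continuous by the induction hypothesis.

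The one case requiring a genuine idea is the quantifier $\varphi=\sup_x\alpha$, which I expect to be the main obstacle. First I reduce to the situation $x\notin\{x_0,\dots,x_{n-1}\}$: since $x$ is bound in $\varphi$, the value $\widetilde\varphi^{\,\mathfrak M,\bar x}(\bar a)$ does not depend on the coordinate indexed by $x$ (should $x$ occur in $\bar x$), so $\widetilde\varphi^{\,\mathfrak M,\bar x}$ factors as the projection forgetting that coordinate followed by $\widetilde\varphi^{\,\mathfrak M,\bar x'}$, where $\bar x'$ omits $x$; by (1) and (2) it then suffices to treat $\bar x'$, which excludes $x$. With $x$ outside the tuple, the semantic clause for $\sup$ gives $\widetilde\varphi^{\,\mathfrak M,\bar x}(\bar a)=\sup_{b\in M}\widetilde\alpha^{\,\mathfrak M,\bar x x}(\bar a,b)$, and by the induction hypothesis $\widetilde\alpha^{\,\mathfrak M,\bar x x}$ is uniformly continuous on $M^{n+1}$, say with modulus $\delta$ with respect to $D^{\mathfrak M}_{n+1}$.

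The decisive observation is that taking a supremum over the adjoined coordinate preserves this modulus. Indeed, suppose $D^{\mathfrak M}_n(\bar a,\bar a')<\delta(\epsilon)$. For every $b\in M$ we have $D^{\mathfrak M}_{n+1}((\bar a,b),(\bar a',b))=D^{\mathfrak M}_n(\bar a,\bar a')<\delta(\epsilon)$, whence $\widetilde\alpha^{\,\mathfrak M,\bar x x}(\bar a,b)\le\widetilde\alpha^{\,\mathfrak M,\bar x x}(\bar a',b)+\epsilon\le\widetilde\varphi^{\,\mathfrak M,\bar x}(\bar a')+\epsilon$; taking the supremum over $b$ yields $\widetilde\varphi^{\,\mathfrak M,\bar x}(\bar a)\le\widetilde\varphi^{\,\mathfrak M,\bar x}(\bar a')+\epsilon$, and by symmetry $|\widetilde\varphi^{\,\mathfrak M,\bar x}(\bar a)-\widetilde\varphi^{\,\mathfrak M,\bar x}(\bar a')|\le\epsilon$. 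Thus $\widetilde\varphi^{\,\mathfrak M,\bar x}$ is uniformly continuous with the same modulus $\delta$, which completes the induction and hence the proof.
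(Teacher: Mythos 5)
Your proof is correct and follows essentially the same route as the paper, which disposes of this theorem in one line as ``induction on terms and formulae, using the fact that moduli of uniform continuity can be built up from other moduli, as mentioned in Remark~\ref{rem:congruence}.'' You have simply written out the details that the paper leaves implicit, and your treatment of the quantifier case --- the observation that taking a supremum over an adjoined coordinate preserves the modulus of uniform continuity --- is exactly the one nontrivial step the sketch glosses over, and it is handled correctly.
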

\begin{proof} By induction on terms and formulae, using the fact that moduli of uniform continuity can be built up from other moduli, as mentioned in Remark \ref{rem:congruence}.
\end{proof}
Using Lemma \ref{lem:bdsub}, Theorem \ref{thm:homo}, and Theorem \ref{thm:mod} , one can prove the following: 

\begin{thm}[Existence of Metric Completion]\label{thm:completion} Let $\mathcal{L}$ be a continuous signature with a metric, and let $\mathfrak{M}$ be a continuous $\mathcal{L}$-pre-structure.  Then there is a continuous $\mathcal{L}$-structure $\mathfrak{\widehat{M}}$ and an elementary $\mathcal{L}$-morphism of $\mathfrak{M}$ into $\mathfrak{\widehat{M}}$.
\end{thm}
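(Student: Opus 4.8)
The plan is to obtain $\widehat{\mathfrak{M}}$ from the pre-structure $\mathfrak{M}$ in two stages, addressing separately the two ways a pseudo-metric can fail to be a complete metric: first quotient out the points at distance zero to get a genuine metric, then complete that metric. At each stage I produce a surjective-or-dense $\mathcal{L}$-morphism that turns out to be elementary, and the composite of the two is the elementary $\mathcal{L}$-morphism of $\mathfrak{M}$ into the complete structure sought.

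For the first stage, define $a \sim b$ on $M$ by $d^{\mathfrak{M}}(a,b) = 0$; since $d^{\mathfrak{M}}$ is a pseudo-metric this is an equivalence relation. The key point is that $\sim$ is a congruence: by Remark \ref{rem:congruence} every $s \in \mathcal{R} \cup \mathcal{F}$ is uniformly continuous, so $d^{\mathfrak{M}}(c,e) = 0$ (which is below $\delta_{s,i}(\epsilon)$ for every $\epsilon$) forces $d^{\mathfrak{M}}(f^{\mathfrak{M}}(\bar a,c,\bar b), f^{\mathfrak{M}}(\bar a,e,\bar b)) \leq \epsilon$ for all $\epsilon$, hence $=0$, and likewise $P^{\mathfrak{M}}$ takes equal values on $\sim$-equivalent tuples. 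I may therefore form the quotient pre-structure $\mathfrak{M}_0 = (M/{\sim}, \rho_0)$ by interpreting $f$, $P$, and $d$ on representatives; well-definedness is exactly the congruence property, conditions (iv)--(v) for $\mathfrak{M}_0$ are inherited from $\mathfrak{M}$ (the quotient map preserves all values), and $d^{\mathfrak{M}_0}$ now satisfies (i) of Definition \ref{defn:structure}. The quotient map $q : M \to M/{\sim}$ is a surjective $\mathcal{L}$-morphism, so by Theorem \ref{thm:homo} it is elementary.

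For the second stage, let $(\widehat{M}, \widehat{d})$ be the metric completion of $(M/{\sim}, d^{\mathfrak{M}_0})$, with canonical isometric embedding $\iota : M/{\sim} \to \widehat{M}$ having dense image. Each $f^{\mathfrak{M}_0}$ is uniformly continuous with respect to the maximum metric of Remark \ref{rem:congruence} (Theorem \ref{thm:mod}), and since $\widehat{M}$ is complete it extends uniquely to a uniformly continuous $f^{\widehat{M}} : \widehat{M}^{n_f} \to \widehat{M}$; likewise each $P^{\mathfrak{M}_0}$ extends uniquely to a uniformly continuous $P^{\widehat{M}} : \widehat{M}^{n_P} \to [0,1]$, using completeness of $[0,1]$. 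A routine approximation argument shows the original moduli $\delta_{s,i}$ still witness conditions (iv)--(v) for the extensions: given $\widehat{d}(c,e) < \delta_{s,i}(\epsilon)$, choose representatives in $\iota(M/{\sim})$ converging to $\bar a,\bar b,c,e$ with their distance still strictly below $\delta_{s,i}(\epsilon)$ (strict inequalities are open), apply the modulus there, and pass to the limit using continuity of $f^{\widehat{M}}$ and $\widehat{d}$. Thus $\widehat{\mathfrak{M}} = (\widehat{M}, \widehat{\rho})$ is a continuous $\mathcal{L}$-structure and $\iota$ is an $\mathcal{L}$-morphism.

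It remains to verify that $\iota$ is elementary, i.e. $\mathfrak{M}_0(\varphi, \tau) = \widehat{\mathfrak{M}}(\varphi, \iota \circ \tau)$ for every $\varphi$ and every $M/{\sim}$-assignment $\tau$; then $\iota \circ q$ is the desired elementary $\mathcal{L}$-morphism. I argue by induction on $\varphi$. Atomic formulae hold because $\iota$ is a morphism, and the $\dotminus$, $\neg$, $\tfrac12$ cases are immediate from the definition of value. The only real work, and where I expect the main obstacle to lie, is the quantifier case $\sup_x \alpha$: I must show that enlarging the domain to the completion does not raise the supremum. After using Lemma \ref{lem:bdsub} to rename bound variables so that $x$ is not bound within the matrix $\alpha$, the induction hypothesis together with $\iota\circ(\tau^a_x) = (\iota\circ\tau)^{\iota(a)}_x$ identifies $\mathfrak{M}_0(\sup_x\alpha,\tau)$ with the supremum of the map $b \mapsto \widetilde{\alpha}^{\,\widehat{\mathfrak{M}},\bar x}(\dots,b,\dots)$ over the dense set $\iota(M/{\sim})$. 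By Theorem \ref{thm:mod} applied inside $\widehat{\mathfrak{M}}$ this map is continuous, and the supremum of a continuous $[0,1]$-valued function over a dense subset equals its supremum over the whole space; the latter is exactly $\widehat{\mathfrak{M}}(\sup_x\alpha, \iota\circ\tau)$, closing the induction. The $\inf$ case is dual, and this completes the construction.
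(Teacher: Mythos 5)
Your proof is correct and follows essentially the same route the paper intends: quotient the pseudo-metric to a genuine metric (elementarity via Theorem \ref{thm:homo}), complete, extend the interpretations by uniform continuity keeping the same moduli, and handle the $\sup$ case of elementarity by density plus the uniform continuity of $\widetilde{\varphi}^{\,\mathfrak{M},\bar x}$ from Theorem \ref{thm:mod}, with Lemma \ref{lem:bdsub} for bound-variable renaming. The paper only sketches this argument, citing exactly these ingredients and the extension-to-the-closure fact; your write-up fills in the details faithfully.
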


\begin{proof} The proof invokes elementary facts about metric spaces, pseudo-metrics, Cauchy sequences, and metric completions.  Essential to this proof is the fact that for all metric spaces
$(M,d)$ and $(M',d')$ such that $(M',d')$ is complete and $N\subseteq M$, if $f:N\to M'$ is a mapping and $\delta$ is a modulus of uniform continuity for $f$, then $f$ can be uniquely extended to a function $\bar{f}: \bar{N}\to M'$ such that $\delta$ is a modulus of uniform continuity for $\bar{f}$ (where $\bar{N}$ denotes the closure of $N$ in $M$).  This fact is important insofar as it implies that an underlying continuous signature with a metric will not have to be altered for a metric completion.
\end{proof}

\begin{defn} Let $\mathcal{L}$ be a continuous signature with a metric, let $\Gamma\subseteq \textup{For}(\mathcal{L})$, and let $\varphi$ be a formula.
\mbox{}
\begin{itemize}
\item[(i)]We write $\Gamma\vDash_{QC}\varphi$ if for every continuous $\mathcal{L}$-structure $\mathfrak{M}$ and $\mathfrak{M}$-assignment $\sigma$, if $(\mathfrak{M},\sigma)\vDash_{Q}\Gamma$, then $(\mathfrak{M},\sigma)\vDash_{Q}\varphi$.
\item[(ii)]  We say that $\Gamma$ is \emph{completely satisfiable} or that $\Gamma$ has a \emph{complete model} if there is a continuous $\mathcal{L}$-structure $\mathfrak{M}$ and $\mathfrak{M}$-assignment $\sigma$ such that $(\mathfrak{M},\sigma)$ models $\Gamma$.
\end{itemize}
\end{defn}

We therefore have the following corollary:
\begin{cor}
Let $\mathcal{L}$ be a continuous signature with a metric, and let $\Gamma\subseteq \textup{For}(\mathcal{L})$.  Then for every formula $\varphi$,
\mbox{}
\begin{itemize}
\item[(i)] $\Gamma\vDash_{Q}\varphi$ if and only if $\Gamma\vDash_{QC}\varphi$.
\item[(ii)] $\Gamma$ is satisfiable if and only if $\Gamma$ is \emph{completely satisfiable}.
\end{itemize}
\end{cor}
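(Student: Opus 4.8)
The plan is to derive both parts as immediate consequences of the Existence of Metric Completion (Theorem~\ref{thm:completion}), exploiting the fact that the morphism it supplies is \emph{elementary} and hence transfers every truth value between a pre-structure and its completion. First I would dispatch the trivial inclusion in each equivalence. Every continuous $\mathcal{L}$-structure is in particular a continuous $\mathcal{L}$-pre-structure, so the class of complete models is a subclass of the class of pre-structure models. Consequently, if $\Gamma\vDash_{Q}\varphi$ (a condition quantifying over all pre-structures), then in particular the implication holds for complete structures, giving $\Gamma\vDash_{QC}\varphi$; and any completely satisfiable $\Gamma$, having a complete model, is a fortiori satisfiable. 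These are the ``easy'' directions, where the universal quantifier simply passes from the larger class to the smaller one.

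For the nontrivial directions I would invoke the completion. Suppose $(\mathfrak{M},\sigma)\vDash_{Q}\Gamma$ with $\mathfrak{M}$ an arbitrary continuous $\mathcal{L}$-pre-structure. By Theorem~\ref{thm:completion} there is a continuous $\mathcal{L}$-structure $\widehat{\mathfrak{M}}$ together with an elementary $\mathcal{L}$-morphism $h\colon M\to\widehat{M}$. The defining property of an elementary morphism yields $\widehat{\mathfrak{M}}(\psi,h\circ\sigma)=\mathfrak{M}(\psi,\sigma)$ for every formula $\psi$. In particular $\widehat{\mathfrak{M}}(\psi,h\circ\sigma)=0$ for each $\psi\in\Gamma$, so $(\widehat{\mathfrak{M}},h\circ\sigma)\vDash_{Q}\Gamma$. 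This complete model witnesses that $\Gamma$ is completely satisfiable, establishing the remaining direction of (ii).

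For (i), suppose $\Gamma\vDash_{QC}\varphi$ and again take an arbitrary pre-structure model $(\mathfrak{M},\sigma)\vDash_{Q}\Gamma$. Passing to the completion as above produces a complete model $(\widehat{\mathfrak{M}},h\circ\sigma)$ of $\Gamma$, so by hypothesis $\widehat{\mathfrak{M}}(\varphi,h\circ\sigma)=0$; applying elementarity once more gives $\mathfrak{M}(\varphi,\sigma)=\widehat{\mathfrak{M}}(\varphi,h\circ\sigma)=0$, i.e.\ $(\mathfrak{M},\sigma)\vDash_{Q}\varphi$. Since $\mathfrak{M}$ and $\sigma$ were arbitrary, $\Gamma\vDash_{Q}\varphi$.

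I do not expect a genuine obstacle here: all the substance resides in the completion theorem itself and in the value-preserving nature of its elementary morphism, both of which are already in hand. The only point requiring care is the bookkeeping of the two entailment relations and keeping straight that $\vDash_{QC}$ ranges over complete $\mathcal{L}$-structures while $\vDash_{Q}$ ranges over all pre-structures, so that the elementarity equation $\mathfrak{M}(\psi,\sigma)=\widehat{\mathfrak{M}}(\psi,h\circ\sigma)$ is deployed in the correct direction when translating satisfaction back and forth.
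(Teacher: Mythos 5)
Your argument is correct and is exactly what the paper intends: it presents this result as an immediate corollary of Theorem~\ref{thm:completion}, with the trivial directions coming from the inclusion of structures among pre-structures and the nontrivial directions from transferring values along the elementary morphism into the completion. Nothing further is needed.
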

\noindent We may thereby restrict the notion of logical entailment to continuous $\mathcal{L}$-structures.    

In classical first-order logic, if one were to require a distinguished binary relation symbol $\approx$ be interpreted only as a congruence relation in each classical structure rather than as strict equality, then one could show for each such structure that there is a surjective elementary $\mathcal{L}$-morphism (in the obvious sense) onto its quotient structure.    Accordingly, we would find that classical first-order logic does not distinguish between structures which require that $\approx$ be interpreted as a congruence relation and structures which require that $\approx$ be interpreted as strict equality.  

 In continuous first-order logic, we see a somewhat analogous result.  We require that a distinguished binary relation symbol $d$ be interpreted as a pseudo-metric that satisfies congruence properties, akin to those of classical first-order logic (cf. Definition \ref{defn:structure}, parts (iii) and (iv)).  One can show that for each $\mathcal{L}$-pre-structure, there is a surjective elementary $\mathcal{L}$-morphism onto its quotient structure (which essentially transforms $d$ into a genuine metric).  Yet one can go one step further:  For any  $\mathcal{L}$-pre-structure with a genuine metric, there is an injective elementary $\mathcal{L}$-morphism (and \emph{a fortiori} an isometry) into its completion, a continuous $\mathcal{L}$-structure.  Therefore, continuous first-order logic does not distinguish between continuous $\mathcal{L}$-pre-structures and continuous $\mathcal{L}$-structures.  Theorem \ref{thm:completion} can be seen as encoding this fact.  As you would expect, most of our work does not rely on this fact, but it nevertheless indicates what matters.

\section{Axioms: Group 2}
\label{sec:Axioms group 2}Before we present the second group of axioms, let us define a notion familiar from classical first-order logic. We say that a formula $\varphi$ is a \emph{generalization} of a formula $\psi$ if for some $n<\omega$ there are variables $x_{1},\ldots,x_{n}$ such that $\varphi=\sup_{x_{1}}\!\!\cdots\sup_{x_{n}}\psi$ (see \cite{Enderton:MathematicalIntroductionToLogic}).  Observe that every formula is a generalization of itself.

When all generalizations of the following eight axiom schemata are added to all generalizations of (A1) -- (A6) (in the appropriate language of course), we obtain an axiomatization for continuous first-order logic.  In this article we show that this axiomatization is complete. Formal deductions and provability are defined as in $\mathsection\ref{sec: Axioms}$, the only rule of inference again being \emph{modus ponens}.  We write $\Gamma\vdash_{Q}\varphi$ to indicate that $\varphi$ is provable from $\Gamma$ in continuous first-order logic. Recall that a formula $\varphi$ is provable from $\Gamma$ just in case it is provable from a finite subset of $\Gamma$.  

 The first three schemata are analogues of axiom schemata of classical first-order logic (see \cite{Enderton:MathematicalIntroductionToLogic}).\\
\begin{list}{(A\arabic{Lcount})}
{\usecounter{Lcount}}
\setcounter{Lcount}{6}
  \item $(\sup_{x}\psi\dotminus\sup_{x}\varphi)\dotminus\sup_x (\psi\dotminus\varphi)$ 
  \item $\varphi[t/x]\dotminus\sup_x\varphi$, where substitution is correct
  \item $\sup_x\varphi\dotminus\varphi$, where $x$ is not free in $\varphi$\\
\end{list}
  If we are dealing with a continuous signature with a metric, we add:\\ 
\begin{list}{(A\arabic{Lcount})}
{\usecounter{Lcount}}
\setcounter{Lcount}{9}
  \item $dxx$
  \item $dxy\dotminus dyx$
  \item $(dxz\dotminus dxy)\dotminus dyz$
  \item[]
  \item For each $f\in\mathcal{F}$, $\epsilon\in(0,1]$, and $r,q\in\mathbb{D}$ with $r>\epsilon$ and $q<\delta_{f,i}(\epsilon)$, the following is an axiom (where $|\bar{x}|=i$ and $|\bar{x}| + |\bar{y}| + 1= n_{f}$):\\
 \,\\
   $(q\dotminus dzw)\wedge (df\bar{x}z\bar{y}f\bar{x}w\bar{y}\dotminus r)$\\
   
\item For each $P\in\mathcal{R}$, $\epsilon\in(0,1]$, and $r,q\in\mathbb{D}$ with $r>\epsilon$ and $q<\delta_{P,i}(\epsilon)$, the following is an axiom (where $|\bar{x}|=i$ and $|\bar{x}| + |\bar{y}| + 1= n_{P}$):\\
\,\\
 $(q\dotminus dzw)\wedge ((P\bar{x}z\bar{y}\dotminus P\bar{x}w\bar{y})\dotminus r)$\\
\end{list}

 Axiom schemata (A10) -- (A12) assert that $d$ is pseudo-metric.  These axiom schemata correspond to the equivalence relation axiom schemata of classical first-order logic.  Although less immediate, (A13) -- (A14) define the uniform continuity moduli with respect to the $i$th argument.  
 
 Let us informally consider (A13).  Suppose (A13) holds and that $d(z,w)<\delta_{f,i}(\epsilon)$.  Then there is $q\in\mathbb{D}$ such that $d(z,w)<q<\delta_{f,i}(\epsilon)$, so $q\dotminus d(z,w)>0$, whence by (A13) it follows that $d(f(\bar{x},z,\bar{y}),f(\bar{x},w,\bar{y}))\dotminus r=0$, i.e., $d(f(\bar{x},z,\bar{y}),f(\bar{x},w,\bar{y}))\leq r$ for every $r>\epsilon$.  Thus, $d(f(\bar{x},z,\bar{y}),f(\bar{x},w,\bar{y}))\leq\epsilon$, so $\delta_{f,i}$ is a uniform continuity modulus of $f$ with respect to the $i$th argument. Conversely, suppose $\delta_{f,i}$ is a uniform continuity modulus of $f$ with respect to the $i$th argument and that $r,q\in\mathbb{D}$ are such that $r>\epsilon$ and $q<\delta_{P,i}(\epsilon)$.  On the one hand, if $q\dotminus d(z,w)>0$, then $d(z,w)<q<\delta_{f,i}(\epsilon)$ and so $d(f(\bar{x},z,\bar{y}),f(\bar{x},w,\bar{y}))\leq \epsilon<r$, whence $d(f(\bar{x},z,\bar{y}),f(\bar{x},w,\bar{y}))\dotminus r=0$ and therefore $(q\dotminus d(z,w))\wedge (d(f(\bar{x},z,\bar{y}),f(\bar{x},w,\bar{y}))\dotminus r)=0$.  On the other hand, if $q\dotminus d(z,w)\leq 0$, again we have $(q\dotminus d(z,w))\wedge (d(f(\bar{x},z,\bar{y}),f(\bar{x},w,\bar{y}))\dotminus r)=0$.

\begin{rem} In view of Remark \ref{rem:congruence}, the reader may have noticed that a continuous signature could equivalently be defined so that $\mathcal{G}$ may instead have the form $\{\delta_{s}:(0,1]\to (0,1]: s\in\mathcal{R}\cup\mathcal{F}\}$, whereby one could forgo talk of moduli of uniform continuity of each symbol $s$ with respect to each of its arguments.  In particuar, (A13) could be replaced by 
\begin{itemize}
\item[(A13$^{\prime}$)]  For every $\epsilon\in(0,1]$ and $r,q\in\mathbb{D}$ with $r>\epsilon$ and $q<\Delta_{f}(\epsilon)$, the following is an axiom:\\
   $(q\dotminus\bigvee_{i<n_{f}} dx_{i}y_{i})\wedge (dfx_{0}\cdots x_{n_{f}-1}fy_{0}\cdots y_{n_{f}-1}\dotminus r)$
   \end{itemize}
   (A14) could also be replaced by an analogous axiom.   Situations might arise in which the uniform continuity moduli of a symbol with respect to each of its arguments are very different, and one might desire to keep track of this, thereby preferring one definition over the other.
\end{rem}
 
We conclude this section with a soundness theorem.
\begin{thm}[Soundness]\label{prop:sound} 
Let $\mathcal{L}$ be a continuous signature (with a metric), and let $\Gamma\subseteq \textup{For}(\mathcal{L})$.  Then for every formula $\varphi$,
\mbox{}
\begin{itemize}
\item[(i)] If $\Gamma\vdash_{Q}\varphi$, then $\Gamma\vDash_{Q}\varphi$ $(\Gamma\vDash_{QC}\varphi)$. 
\item[(ii)] If $\Gamma$ is $(\!\text{completely})$ satisfiable, then $\Gamma$ is consistent.
\end{itemize}	 
\end{thm}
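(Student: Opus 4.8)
The plan is to prove (i) by induction on the length of a formal deduction from $\Gamma$, and then to derive (ii) from (i) by contraposition. For (i) the two facts I need are: (a) every instance of an axiom schema, together with all of its generalizations, is \emph{valid}, meaning it takes value $0$ in every continuous $\mathcal{L}$-pre-structure $\mathfrak{M}$ under every $\mathfrak{M}$-assignment $\sigma$; and (b) \emph{modus ponens} preserves the property of evaluating to $0$ in a fixed model. Granting these, if $(\varphi_i:i<n)$ is a deduction from $\Gamma$ and $(\mathfrak{M},\sigma)\vDash_Q\Gamma$, a routine induction shows $\mathfrak{M}(\varphi_i,\sigma)=0$ for each $i<n$, so in particular $\mathfrak{M}(\varphi,\sigma)=0$; hence $\Gamma\vDash_Q\varphi$. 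The parenthetical conclusion $\Gamma\vDash_{QC}\varphi$ then follows immediately from the corollary (following Theorem~\ref{thm:completion}) asserting that $\Gamma\vDash_Q\varphi$ if and only if $\Gamma\vDash_{QC}\varphi$. The check for \emph{modus ponens} is one line: if $\mathfrak{M}(\varphi,\sigma)=0$ and $\mathfrak{M}(\psi\dotminus\varphi,\sigma)=\max(\mathfrak{M}(\psi,\sigma)-\mathfrak{M}(\varphi,\sigma),0)=0$, then $\mathfrak{M}(\psi,\sigma)\leq\mathfrak{M}(\varphi,\sigma)=0$, so $\mathfrak{M}(\psi,\sigma)=0$.

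The bulk of the work is verifying validity of the axioms, which I would organize by group. For the propositional schemata (A1)--(A6), I would fix $(\mathfrak{M},\sigma)$ and observe that the connectives $\dotminus,\neg,\tfrac12$ are interpreted in a pre-structure by exactly the operations defining a continuous-propositional truth assignment; reading off the values of the (atomic and quantified) subformulas as propositional-variable values, the already-noted soundness of continuous propositional logic gives that each such instance evaluates to $0$. To handle generalizations uniformly I would record the short lemma that $\sup_x$ of an everywhere-$0$ function is $0$, so that whenever $\psi$ is valid, so is each $\sup_{x_1}\!\cdots\sup_{x_n}\psi$; this disposes of generalizations for \emph{all} the schemata at once. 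For the metric schemata (A10)--(A12) I would read the value directly off the pseudo-metric $d^{\mathfrak{M}}$: (A10) gives $d^{\mathfrak{M}}(\sigma(x),\sigma(x))=0$, (A11) uses symmetry, and (A12) uses the triangle inequality to force both applications of $\dotminus$ to vanish. For the uniform-continuity schemata (A13)--(A14), I would simply formalize the informal computation already given in the text, which shows the displayed conjunction evaluates to $0$ precisely because $\delta_{f,i}$ (resp. $\delta_{P,i}$) satisfies condition (iv) (resp. (v)) of Definition~\ref{defn:structure}.

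The quantifier schemata (A7)--(A9) require slightly more care. For (A8), $\varphi[t/x]\dotminus\sup_x\varphi$ with correct substitution, I would apply the Substitution Lemma~\ref{lem:inter} to rewrite $\mathfrak{M}(\varphi[t/x],\sigma)=\mathfrak{M}(\varphi,\sigma^a_x)$ with $a=t^{\mathfrak{M},\sigma}$; since $\mathfrak{M}(\varphi,\sigma^a_x)\leq\sup\{\mathfrak{M}(\varphi,\sigma^b_x):b\in M\}=\mathfrak{M}(\sup_x\varphi,\sigma)$, the $\dotminus$ yields $0$. For (A9), with $x$ not free in $\varphi$, the value $\mathfrak{M}(\varphi,\sigma^a_x)$ is independent of $a$, so $\mathfrak{M}(\sup_x\varphi,\sigma)=\mathfrak{M}(\varphi,\sigma)$ and the difference is $0$. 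Axiom (A7) is the one genuinely nontrivial computation: writing $\Psi(a)=\mathfrak{M}(\psi,\sigma^a_x)$ and $\Phi(a)=\mathfrak{M}(\varphi,\sigma^a_x)$, it reduces to the inequality
\[
\textstyle\sup_a\Psi(a)-\sup_a\Phi(a)\ \leq\ \sup_a\big(\Psi(a)-\Phi(a)\big)\ \leq\ \sup_a\max\big(\Psi(a)-\Phi(a),0\big),
\]
which holds because $\Psi(a)=(\Psi(a)-\Phi(a))+\Phi(a)\leq\sup_b(\Psi(b)-\Phi(b))+\sup_b\Phi(b)$ for every $a$; taking the supremum over $a$ and comparing with the interpretation of the nested $\dotminus$'s forces (A7) to evaluate to $0$.

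Finally, for (ii) I would argue contrapositively. If $\Gamma$ is inconsistent then $\Gamma\vdash_Q\varphi$ for every $\varphi$, and in particular $\Gamma\vdash_Q 1$, where $1=\neg(\varphi\dotminus\varphi)$. Were $\Gamma$ (completely) satisfiable, a model $(\mathfrak{M},\sigma)$ would give $\mathfrak{M}(1,\sigma)=0$ by part (i), contradicting $\mathfrak{M}(1,\sigma)=1-\max(0,0)=1$. Hence $\Gamma$ has no model, and since every complete model is in particular a model, this covers both the satisfiable and completely satisfiable cases. I expect the only delicate points to be the inequality argument for (A7) and the careful transcription of the continuity computation for (A13)--(A14); everything else is a direct unwinding of the interpretation clauses.
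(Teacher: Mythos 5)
Your proposal is correct and follows essentially the same route as the paper, whose proof is just a compressed version of yours: reduce soundness to the validity of each axiom schema (and its generalizations), handling (A1)--(A6) via the propositional soundness facts, (A7)--(A9) via the substitution machinery, and (A10)--(A14) via the pseudo-metric and uniform-continuity conditions, with the induction on deductions and the \emph{modus ponens} check left implicit. Your write-up merely supplies the details (the inequality for (A7), the $\sup_x$-of-zero observation for generalizations, and the contrapositive for (ii)) that the paper omits.
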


\begin{proof} It suffices to show that each axiom schema is valid. The facts recorded in $\mathsection\ref{sec:bbt}$ can be used to show that (A1) -- (A6) are valid, while the results from \S \ref{sec:sub} can be used to prove that (A7) -- (A9) are valid. The discussion following (A14) explains why (A10) -- (A14) are valid.
\end{proof}

\section{The Essentials}
\label{sec:relations}
In this section, we rely heavily on facts recorded in $\mathsection\ref{sec:bbt}$ insofar as we exploit obvious relations amongst continuous first-order logic and continuous and  Łukasiewicz propositional logics. 
\subsection*{The Deduction Theorem, Generalization Theorem, and a Lemma about Bound Substitution}

Observe 
\begin{eqnarray*}
\{(P\dotminus Q)\dotminus Q, Q\}&\vdash^{\tiny\L}&\! P,\,\,\, \text{but}\\ 
\{(P\dotminus Q)\dotminus Q\}&\nvdash^{\tiny\L}&\! P\dotminus Q.
\end{eqnarray*}
 Thus the reader should be unsurprised to find that continuous first-order logic only satisfies a weak version of the classical Deduction Theorem.
\begin{thm}[Deduction Theorem]\label{thm:deduc}
Let $\Gamma\subseteq \textup{For}(\mathcal{L})$, and let $\varphi,\psi$ be formulae.\\
Then $\Gamma\cup\{\psi\}\vdash_{Q}\varphi$ if and only if $\Gamma\vdash_{Q}\varphi\dotminus n\psi$ for some $n<\omega$.
\end{thm}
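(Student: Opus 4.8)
The plan is to prove the nontrivial (``only if'') direction by induction on the length of a formal deduction, and to dispatch the converse by a short argument using modus ponens. For the converse, suppose $\Gamma\vdash_{Q}\varphi\dotminus n\psi$. Since $\psi\in\Gamma\cup\{\psi\}$, I would adjoin $\psi$ to the deduction and peel off the copies of $\psi$ one at a time: recalling $\varphi\dotminus(m+1)\psi=(\varphi\dotminus m\psi)\dotminus\psi$, modus ponens applied to $\psi$ and $\varphi\dotminus(m+1)\psi$ yields $\varphi\dotminus m\psi$. Iterating $n$ times starting from $\varphi\dotminus n\psi$ produces $\varphi\dotminus 0\psi=\varphi$, so $\Gamma\cup\{\psi\}\vdash_{Q}\varphi$.

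For the main direction, fix a formal deduction $(\varphi_{i}:i<m)$ of $\varphi$ from $\Gamma\cup\{\psi\}$ and show by induction on $i$ that there is $n_{i}<\omega$ with $\Gamma\vdash_{Q}\varphi_{i}\dotminus n_{i}\psi$; the witness $n_{m-1}$ (with $\varphi_{m-1}=\varphi$) then proves the theorem. If $\varphi_{i}$ is an instance of an axiom schema or lies in $\Gamma$, take $n_{i}=0$. If $\varphi_{i}=\psi$, take $n_{i}=1$: the formula $\psi\dotminus\psi$ is valid (its value is identically $0$), so Fact~\ref{fact:weak} gives $\vdash_{Q}\psi\dotminus\psi=\psi\dotminus 1\psi$. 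The crux is the modus ponens step, in which $\varphi_{k}=\varphi_{i}\dotminus\varphi_{j}$ for some $j,k<i$. By the induction hypothesis I have $A\equals\varphi_{j}\dotminus n_{j}\psi$ and $B\equals(\varphi_{i}\dotminus\varphi_{j})\dotminus n_{k}\psi$ provable from $\Gamma$, and I want $\varphi_{i}\dotminus n_{i}\psi$ provable.

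The key idea is the choice $n_{i}=n_{j}+n_{k}$ together with the observation that the Łukasiewicz formula $(C\dotminus A)\dotminus B$, where $C\equals\varphi_{i}\dotminus(n_{j}+n_{k})\psi$, is valid. This reduces to the inequality $v(C)\le v(A)+v(B)$ for every truth assignment $v$, which I would verify by a short case analysis on the sign of $v(\varphi_{i})-v(\varphi_{j})$ and of the relevant truncations (writing $p=v(\psi)$ and using the identity $v(\chi\dotminus n\psi)=\max(v(\chi)-np,0)$). Since $(C\dotminus A)\dotminus B$ is valid and is built from $\varphi_{i},\varphi_{j},\psi$ using only $\dotminus$, treating these three formulae as propositional atoms and invoking Fact~\ref{fact:weak} yields $\vdash_{Q}(C\dotminus A)\dotminus B$: every such Łukasiewicz propositional theorem lifts to continuous first-order logic because all instances of (A1)--(A4) are among our axioms and the rule of inference is again modus ponens. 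Two applications of modus ponens then finish the step: from $B$ and $(C\dotminus A)\dotminus B$ derive $C\dotminus A$, and from $A$ and $C\dotminus A$ derive $C=\varphi_{i}\dotminus n_{i}\psi$.

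I expect the main obstacle to be precisely this modus ponens case: isolating the correct target $\varphi_{i}\dotminus(n_{j}+n_{k})\psi$ and packaging the two induction hypotheses as the single valid Łukasiewicz tautology $(C\dotminus A)\dotminus B$, so that weak completeness for Łukasiewicz logic converts semantic validity into a $\Gamma$-free derivation usable by modus ponens. Everything else --- the base cases, the bookkeeping of the integers $n_{i}$, and the converse direction --- is routine once this device is in place.
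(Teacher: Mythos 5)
Your proof is correct and takes essentially the same route as the paper's: the converse by iterated modus ponens, and the forward direction by induction on the deduction, with the modus ponens case handled by semantically verifying a Łukasiewicz tautology and importing it into $\vdash_{Q}$ via weak completeness for Łukasiewicz logic (Fact~\ref{fact:weak}). Your key formula $(C\dotminus A)\dotminus B$ with $n_{i}=n_{j}+n_{k}$ is, up to the order in which the two premises are discharged, exactly the schema $((\beta\dotminus(n+m)\alpha)\dotminus((\beta\dotminus\gamma)\dotminus n\alpha))\dotminus(\gamma\dotminus m\alpha)$ that the paper's proof rests on.
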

\begin{proof}  Right to left is clear. For the implication from left to right, observe that for all $\alpha,\beta,\gamma$ and $n,m<\omega$, 
$$
\vdash^{\tiny\L}\!\ ((\beta\dotminus (n+m)\alpha)\dotminus((\beta\dotminus\gamma)\dotminus n\alpha))\dotminus(\gamma\dotminus m\alpha).
$$
This can be shown using the semantics of Łukasiewicz propositional logic. The proof then proceeds by induction on the set of consequences of $\Gamma\cup\{\psi\}$.
\end{proof}

\begin{lem}[Generalization Theorem]
\label{lem:Genthm} Let $\Gamma\subseteq \textup{For}(\mathcal{L})$, and let $\varphi$ be a formula. 
If $\Gamma\vdash\varphi$ and $x$ is not a free variable in $\Gamma$, then $\Gamma\vdash\sup_{x} \varphi$. 
\end{lem}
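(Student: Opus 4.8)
The plan is to induct on the length of a formal deduction witnessing $\Gamma\vdash\varphi$. Fixing a deduction $(\varphi_0,\ldots,\varphi_{m-1})$ with $\varphi_{m-1}=\varphi$, I would prove by strong induction on $i<m$ that $\Gamma\vdash\sup_x\varphi_i$; the instance $i=m-1$ is then the desired conclusion. The induction splits according to the three clauses in the definition of a formal deduction: $\varphi_i$ is an axiom instance, $\varphi_i\in\Gamma$, or $\varphi_i$ arises by modus ponens from earlier entries.

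If $\varphi_i$ is an axiom instance, then $\sup_x\varphi_i$ is a generalization of $\varphi_i$ and hence is itself an axiom, since the axioms of continuous first-order logic were stipulated to include all generalizations of (A1)--(A14); so $\Gamma\vdash\sup_x\varphi_i$ immediately. If $\varphi_i\in\Gamma$, this is the sole place where the hypothesis is invoked: because $x$ is not free in $\Gamma$, it is not free in $\varphi_i$, so $\sup_x\varphi_i\dotminus\varphi_i$ is an instance of (A9), and one application of modus ponens to $\varphi_i$ together with this axiom yields $\sup_x\varphi_i$.

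The modus ponens case carries the real content. Suppose $\varphi_i$ was inferred from earlier entries $\varphi_j$ and $\varphi_k=\varphi_i\dotminus\varphi_j$. The induction hypothesis supplies $\Gamma\vdash\sup_x\varphi_j$ and $\Gamma\vdash\sup_x(\varphi_i\dotminus\varphi_j)$. I would then take the (A7)-instance $(\sup_x\varphi_i\dotminus\sup_x\varphi_j)\dotminus\sup_x(\varphi_i\dotminus\varphi_j)$ and apply modus ponens with $\sup_x(\varphi_i\dotminus\varphi_j)$ to obtain $\sup_x\varphi_i\dotminus\sup_x\varphi_j$; a second modus ponens against $\sup_x\varphi_j$ then delivers $\sup_x\varphi_i$, closing the induction.

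I expect no genuine obstacle, as the argument mirrors its classical counterpart almost verbatim. The only point demanding care is the modus ponens case, where (A7) must be combined with the inference rule twice and in the correct order --- this is precisely the axiom ensuring that $\sup_x$ commutes past $\dotminus$ well enough for generalization to survive an application of modus ponens --- while closure of the axiom set under generalization disposes of axiom instances and (A9) handles the members of $\Gamma$.
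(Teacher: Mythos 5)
Your proof is correct and is exactly the argument the paper intends: the paper's own proof is just a one-line pointer to the classical Generalization Theorem, and your induction on the deduction — closure of the axiom set under generalization for axiom instances, (A9) plus modus ponens for members of $\Gamma$, and (A7) applied twice in the modus ponens case — is the faithful adaptation of that classical argument to the continuous setting.
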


\begin{proof} This is similar to the proof of the Generalization Theorem for classical first-order logic (see \cite{Enderton:MathematicalIntroductionToLogic}).
\end{proof}

To illustrate how we rely on facts recorded in $\mathsection\ref{sec:bbt}$ , we provide a proof of the next lemma.
\begin{lem}
\label{lem:Bound substitution}
Let $\varphi$ be a formula, and let $y$ be a variable which does not occur in $\varphi$. Then    
\mbox{}
\begin{itemize}
\item[(a)] $\vdash_{Q}\varphi\dotminus\varphi \{y/x\}$.
\item[(b)] $\vdash_{Q}\varphi\{y/x\}\dotminus\varphi$.
\end{itemize}
\end{lem}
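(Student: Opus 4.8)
The plan is to argue by induction on the structure of $\varphi$, exploiting throughout the fact that any deduction carried out in continuous propositional logic lifts verbatim to continuous first-order logic: since (the generalizations of) (A1)--(A6) are themselves among the first-order axioms and modus ponens is the sole rule, uniformly replacing the propositional variables of a propositional deduction by first-order formulae turns it into a first-order deduction. In particular, whenever a finite set $\Sigma$ of propositional schemata entails $\chi$ semantically, Fact~\ref{fact:finite} gives $\Sigma\vdash\chi$, and this lifts to $\vdash_Q$.

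For the base case $\varphi$ atomic we have $\varphi\{y/x\}=\varphi$, so both (a) and (b) reduce to $\vdash_Q\varphi\dotminus\varphi$, which holds because $\varphi\dotminus\varphi$ is propositionally valid (Fact~\ref{fact:weak}) and hence an instance of a first-order theorem. For the connective cases $\varphi=\alpha\dotminus\beta$, $\varphi=\neg\alpha$, $\varphi=\frac12\alpha$, bound substitution distributes over the connective, so $\varphi\{y/x\}$ is the connective applied to $\alpha\{y/x\}$ (and $\beta\{y/x\}$). The induction hypothesis supplies $\vdash_Q\alpha\dotminus\alpha\{y/x\}$ and $\vdash_Q\alpha\{y/x\}\dotminus\alpha$ (and likewise for $\beta$), which force $v(\alpha)=v(\alpha\{y/x\})$ in every model; the congruence of each connective with respect to value-equality is a valid propositional entailment, so the desired instance — for example $(\alpha\dotminus\beta)\dotminus(\alpha\{y/x\}\dotminus\beta\{y/x\})$ — is provable by the lifting principle and then follows by modus ponens from the hypotheses.

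The quantifier case is the heart of the argument, and the subcase $\varphi=\sup_x\alpha$ (the bound variable being the one renamed) is the main obstacle; the subcase $\sup_z\alpha$ with $z\ne x$ is handled exactly as the connective cases, by pushing the quantifier past $\dotminus$ using (A7) together with Generalization (Lemma~\ref{lem:Genthm}), and $\inf$ is subsumed since it abbreviates $\neg\sup\neg$. For the main subcase, set $\beta\equals\alpha[y/x]$, so that $\varphi\{y/x\}=\sup_y\beta$; since $y$ does not occur in $\alpha$ the substitution is correct and $\beta[x/y]=\alpha$. To prove (a), I would start from the (A8)-instance $\beta[x/y]\dotminus\sup_y\beta$, i.e.\ $\alpha\dotminus\sup_y\beta$, apply Generalization in $x$ to obtain $\vdash_Q\sup_x(\alpha\dotminus\sup_y\beta)$, and feed this into the (A7)-instance $(\sup_x\alpha\dotminus\sup_x\sup_y\beta)\dotminus\sup_x(\alpha\dotminus\sup_y\beta)$ by modus ponens to get $\vdash_Q\sup_x\alpha\dotminus\sup_x\sup_y\beta$. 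Because $x$ is not free in $\sup_y\beta$, axiom (A9) gives $\vdash_Q\sup_x\sup_y\beta\dotminus\sup_y\beta$, and chaining the two through the (propositionally provable, hence liftable) transitivity of $\dotminus$ yields $\vdash_Q\sup_x\alpha\dotminus\sup_y\beta$, which is (a).

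Part (b) is symmetric: the (A8)-instance $\alpha[y/x]\dotminus\sup_x\alpha$, i.e.\ $\beta\dotminus\sup_x\alpha$, generalized in $y$ and combined with the corresponding (A7)-instance, gives $\vdash_Q\sup_y\beta\dotminus\sup_y\sup_x\alpha$; since $y$ does not occur in $\alpha$, (A9) gives $\vdash_Q\sup_y\sup_x\alpha\dotminus\sup_x\alpha$, and transitivity delivers $\vdash_Q\sup_y\beta\dotminus\sup_x\alpha$. The one point demanding care throughout is the bookkeeping of free variables and correctness of substitutions — that $x$ is not free in $\sup_y\beta$, that $y$ is not free in $\sup_x\alpha$, and that the free substitutions invoked are correct — all of which are guaranteed by the hypothesis that $y$ does not occur in $\varphi$.
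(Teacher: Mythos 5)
Your proposal is correct and follows essentially the same route as the paper's proof: induction on $\varphi$, with the base and connective cases discharged by lifting finitely many propositionally valid entailments (via Fact~\ref{fact:finite} and the observation that propositional deductions transfer to $\vdash_Q$), and the critical $\sup_x$ case handled by the chain (A8), Generalization, (A7), (A9), and transitivity through (A2). The only cosmetic differences are that you write out both directions of the quantifier case symmetrically where the paper proves (b) and says ``(a) is similar,'' and that you invoke a general lifting principle where the paper cites specific propositional theorems; the underlying argument is identical.
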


\begin{proof} This is proved by induction on $\varphi$ for $y$ which does not occur in $\varphi$.
\begin{itemize}
\item[] $\varphi=Pt_{0}\cdots t_{n_{P}-1}$: Then by definition $\varphi\{y/x\}=\varphi$; since $\vdash_{Q}\!\!\varphi\dotminus\varphi$, we are done. 
\item[] $\varphi=\frac12 \psi$: Then by definition $\varphi\{y/x\}=\frac12\psi\{y/x\}$ and this bound substitution is correct.  So by the induction hypothesis, we have $\vdash_{Q}\psi\dotminus\psi \{y/x\}$ and $\vdash_{Q}\psi\{y/x\}\dotminus\psi$. Now observe
$$
\vdash^{\tiny\text{C}\L}\!\!\frac12 \alpha\dotminus\alpha,\,\,\,\text{and}
$$
$$
\vdash^{\tiny\text{C}\L}\!\!(\frac12 \beta\dotminus\frac12\alpha)\dotminus\frac12(\beta\dotminus\alpha).
$$
Thus by a few applications of \emph{modus ponens}, we arrive at (a) and (b).
\item[] $\varphi=\neg\psi$: Then again by definition $\varphi\{y/x\}=\neg\psi\{y/x\}$ and this bound substitution is correct.  Observe
$$
\vdash^{\tiny\L}\!(\neg\alpha\dotminus\neg\beta)\dotminus(\beta\dotminus\alpha).
$$
Using the induction hypothesis and \emph{modus ponens} we obtain (a) and (b).
\item[] $\varphi=(\beta\dotminus\alpha)$: Then $\varphi\{y/x\}=(\beta\{y/x\}\dotminus\alpha\{y/x\})$ and $\beta\{y/x\}$ and $\alpha\{y/x\}$ are correct.  Our induction hypothesis supplies us 
$$
\vdash_{Q}\alpha\dotminus\alpha\{y/x\},
$$ 
$$
\vdash_{Q}\alpha\{y/x\}\dotminus\alpha,
$$ 
and
$$ 
\vdash_{Q}\beta\dotminus\beta\{y/x\},
$$ 
$$
\vdash_{Q}\beta\{y/x\}\dotminus\beta.
$$   
Moreover, observe
$$
\vdash^{\tiny\L}\!(((\beta'\dotminus\alpha')\dotminus(\beta\dotminus \alpha))\dotminus (\beta'\dotminus \beta))\dotminus (\alpha\dotminus\alpha').
$$
Hence, by repeated applications of \emph{modus ponens} and the induction hypothesis, we obtain (a) and (b).
\item[] $\varphi=\sup_{x}\psi$: Then $\varphi\{y/x\}=\sup_{x}\psi[y/x]$ and this substitution is correct ($y$ is not free in $\psi$).  Hence,
\begin{align*}
&\vdash_{Q}\psi[y/x]\dotminus\sup\!_{x}\psi\, (\text{A8})\\    
&\vdash_{Q}\sup\!_{y} (\psi[y/x]\dotminus\sup\!_{x}\psi)\, (\text{Generalization Theorem}) \\
&\vdash_{Q}(\sup\!_{y} \psi[y/x]\dotminus\sup\!_{y}\sup\!_{x}\psi)\dotminus \sup\!_{y} (\psi[y/x]\dotminus\sup\!_{x}\psi)\, (\text{A7})\\
&\vdash_{Q}\sup\!_{y} \psi[y/x]\dotminus\sup\!_{y}\sup\!_{x}\psi\,\, (\textit{modus ponens})\\
&\vdash_{Q}((\sup\!_{y} \psi[y/x]\dotminus\sup\!_{x}\psi)\dotminus(\sup\!_{y} \psi[y/x]\dotminus\sup\!_{y}\sup\!_{x}\psi))\dotminus (\sup\!_{y}\sup\!_{x}\psi\dotminus\sup\!_{x} \psi)\,(\text{A2})\\
&\vdash_{Q}\sup\!_{y}\sup\!_{x}\psi\dotminus\sup\!_{x}\psi\,(\text{A9})\, (y\, \text{is not free in}\, \sup\!_{x}\psi),
\end{align*}
from which it follows by applying \emph{modus ponens} several times that 
$\vdash_{Q}\varphi\{y/x\}\dotminus\varphi$.
A similar argument can be used to establish (a).
\item[] $\varphi=\sup_{z}\psi$ and $z\neq x$: Then $\varphi\{y/x\}=\sup_{z}\psi\{y/x\}$ and this bound substitution is correct.  Apply the induction hypothesis, the Generalization Theorem, (A7), and \emph{modus ponens} to obtain (a) and (b).  \qedhere
\end{itemize}
\end{proof}

If $c$ is an $0$-ary function symbol (what we of course call a \emph{constant symbol}) and $x$ is a variable, we may define $\varphi[x/c]$ in the expected way.  However, we omit a formal definition and state a lemma without proof.
\begin{lem}\label{lem:csub} Let $\Gamma\subseteq \textup{For}(\mathcal{L})$, and let $\varphi$ be a formula.
If $\Gamma\vdash_{Q}\varphi$, $c$ is a constant symbol that does not occur in $\Gamma$, and $x$ does not occur in $\varphi$, then $\Gamma\vdash_{Q}\sup_{x}\varphi[x/c]$.
\end{lem}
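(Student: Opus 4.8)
The plan is to adapt the classical \emph{generalization on constants} argument to the continuous setting, replacing $c$ by a fresh variable throughout a deduction and then combining the Generalization Theorem (Lemma \ref{lem:Genthm}) with the Bound Substitution Lemma (Lemma \ref{lem:Bound substitution}). Concretely, suppose $\Gamma\vdash_{Q}\varphi$ and fix a finite deduction $(\varphi_{0},\ldots,\varphi_{m-1})$ from $\Gamma$ with $\varphi_{m-1}=\varphi$. Choose a variable $y$ occurring in none of the $\varphi_{i}$; since a deduction is finite such a $y$ exists, and in particular $y\neq x$ and $y$ does not occur in $\varphi$. I would then claim that the sequence $(\varphi_{0}[y/c],\ldots,\varphi_{m-1}[y/c])$, obtained by uniformly replacing the constant $c$ by $y$, is again a deduction from $\Gamma$.

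To verify the claim I would check the three ways a line can be justified. If $\varphi_{i}\in\Gamma$, then since $c$ does not occur in $\Gamma$ we have $\varphi_{i}[y/c]=\varphi_{i}\in\Gamma$. If $\varphi_{i}$ arises by \emph{modus ponens} from $\varphi_{j}$ and $\varphi_{k}=\varphi_{i}\dotminus\varphi_{j}$, then because $[y/c]$ commutes with $\dotminus$ we have $\varphi_{k}[y/c]=\varphi_{i}[y/c]\dotminus\varphi_{j}[y/c]$, so the inference is preserved. The principal case is that $\varphi_{i}$ is an instance of a schema (A1)--(A14), or a generalization thereof, and here I must argue that $\varphi_{i}[y/c]$ is again such an instance. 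For the propositional schemata (A1)--(A6) this is immediate, as $[y/c]$ merely transforms the subformulas instantiating the schema. For (A10)--(A14) the constant $c$ does not occur at all and, $y$ being fresh, these lines are literally unchanged. Because $y\neq x$ and $y$ is fresh, $[y/c]$ commutes past the $\sup_{x}$ binders in (A7) and (A9) and past the outer quantifiers of any generalization; in (A9) the variable $x$ remains non-free in $\varphi[y/c]$, since $[y/c]$ introduces $y$ rather than $x$.

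The delicate case, and the one I expect to be the main obstacle, is axiom (A8), $\varphi[t/x]\dotminus\sup_{x}\varphi$ with $[t/x]$ correct. Here I would establish the substitution-commutation identity $(\varphi[t/x])[y/c]=(\varphi[y/c])\big[(t[y/c])/x\big]$, which holds because $c$ is a constant (so it carries no bound variables to interfere) and $y$ is fresh (so $y\neq x$ and $y$ cannot be captured); I would likewise check that correctness of $[t[y/c]/x]$ into $\varphi[y/c]$ is inherited from correctness of $[t/x]$ into $\varphi$, again using freshness of $y$ to preclude any new capture. Granting these, $\varphi_{i}[y/c]$ is an instance of (A8) for the formula $\varphi[y/c]$ and term $t[y/c]$, which completes the claim. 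Consequently $(\varphi_{i}[y/c])_{i<m}$ witnesses $\Gamma_{0}\vdash_{Q}\varphi[y/c]$, where $\Gamma_{0}\subseteq\Gamma$ is the finite set of members of $\Gamma$ appearing in the deduction (unchanged by $[y/c]$). Since $y$ occurs in no $\varphi_{i}$, it is not free in $\Gamma_{0}$, so the Generalization Theorem gives $\Gamma_{0}\vdash_{Q}\sup_{y}\varphi[y/c]$, whence $\Gamma\vdash_{Q}\sup_{y}\varphi[y/c]$.

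Finally I would pass from $y$ back to $x$. Put $\psi\equals\sup_{x}\varphi[x/c]$; since $x$ does not occur in $\varphi$, the only occurrences of $x$ in $\varphi[x/c]$ are those replacing $c$, so the bound substitution satisfies $\psi\{y/x\}=\sup_{y}\varphi[y/c]$. As $y$ does not occur in $\psi$, Lemma \ref{lem:Bound substitution}(b) yields $\vdash_{Q}\psi\{y/x\}\dotminus\psi$, that is, $\vdash_{Q}(\sup_{y}\varphi[y/c])\dotminus\sup_{x}\varphi[x/c]$. One application of \emph{modus ponens} to this together with $\Gamma\vdash_{Q}\sup_{y}\varphi[y/c]$ then delivers $\Gamma\vdash_{Q}\sup_{x}\varphi[x/c]$, as required.
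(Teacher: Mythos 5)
The paper states this lemma without proof, so there is no official argument to compare against; your generalization-on-constants argument is the standard one and is surely what the authors intended, and it is essentially correct. Two remarks. First, when you choose $y$ you should stipulate $y\neq x$ explicitly: this does not follow from $y$ being absent from the $\varphi_{i}$ in the case that $x$ itself happens to occur nowhere in the deduction (though in that degenerate case $\sup_{y}\varphi[y/c]$ already \emph{is} $\sup_{x}\varphi[x/c]$ and the renaming step is vacuous). Second, and this is the one genuine error: your final \emph{modus ponens} is applied in the wrong direction. The rule derives $\psi$ from $\varphi$ and $\psi\dotminus\varphi$, so to pass from the derived formula $\sup_{y}\varphi[y/c]=\psi\{y/x\}$ to the target $\psi=\sup_{x}\varphi[x/c]$ you need $\vdash_{Q}\psi\dotminus\psi\{y/x\}$, which is part (a) of Lemma \ref{lem:Bound substitution}, not part (b) as you cite; since the lemma supplies both directions, the repair is immediate. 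The rest of the argument --- the line-by-line verification that $[y/c]$ preserves membership in $\Gamma$, preserves \emph{modus ponens} inferences, and sends instances of (A1)--(A14) and their generalizations to instances of the same schemata, with the substitution-commutation identity handling (A8) --- is sound, and your application of the Generalization Theorem to the finite set $\Gamma_{0}$ of premises actually used, rather than to all of $\Gamma$, correctly avoids having to assume $y$ is non-free in the whole of $\Gamma$.
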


\subsection*{Maximal Consistent Sets of Formulae}
 In classical first-order logic, we say that a set $\Delta$ of formulae is \emph{maximal consistent} if $\Delta$ is consistent and for every formula $\varphi$,
\begin{center}
$\varphi\in \Delta$ or $\neg\varphi\in\Delta$.
\end{center} 
In continuous first-order, however, this definition will not do.  For we certainly  should like to exclude \emph{both} $\frac{1}{2}$ \emph{and} $\neg\frac{1}{2}$ from \emph{every} consistent set. In this subsection, we show what one \emph{should} mean by calling a set of formulae maximal consistent in continuous first-order logic.

\begin{lem} \label{lem:max} Let $\Gamma\subseteq \textup{For}(\mathcal{L})$ be consistent.
Then for all formulae $\varphi,\psi$,
\mbox{}
\begin{itemize}
\item[(i)] If $\Gamma\vdash_{Q}\varphi\dotminus 2^{-n}$ for all $n<\omega$ then $\Gamma\cup\{\varphi\}$ is consistent.
\item[(ii)] Either $\Gamma\cup\{\varphi\dotminus\psi\}$ or $\Gamma\cup\{\psi\dotminus\varphi\}$ is consistent.
\end{itemize}
\end{lem}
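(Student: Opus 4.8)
The plan is to prove both parts by contradiction, in each case reducing to the inconsistency of a \emph{finite} set of formulae in the underlying continuous propositional logic and then transferring that inconsistency back to $\Gamma$ by substitution. Two facts are used throughout. First, the transfer principle: any continuous propositional deduction remains a valid continuous first-order deduction once first-order formulae are substituted for the propositional atoms, since this operation carries instances of (A1)--(A6) to instances of (A1)--(A6) and preserves \emph{modus ponens}. Second, the first-order analogue of the remark following Fact \ref{fact:finite}: a consistent set proves no nonzero dyadic, and in particular $\Gamma \vdash_Q 1$ exactly when $\Gamma$ is inconsistent. The nontrivial direction here follows from Fact \ref{fact:contcomp} together with the transfer principle, since $\{1\}$ is propositionally unsatisfiable and hence proves every formula.

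For (i), suppose toward a contradiction that $\Gamma \cup \{\varphi\}$ is inconsistent, so $\Gamma \cup \{\varphi\} \vdash_Q 1$. By the Deduction Theorem (Theorem \ref{thm:deduc}) there is $n < \omega$ with $\Gamma \vdash_Q 1 \dotminus n\varphi$. Choose $k < \omega$ so large that $2^{-k} < 1/n$; by hypothesis $\Gamma \vdash_Q \varphi \dotminus 2^{-k}$. Now regard $\varphi$ as a propositional atom $P$ and consider the finite set $\{1 \dotminus nP,\ P \dotminus 2^{-k}\}$ in continuous propositional logic. A truth assignment $v$ modelling it would satisfy both $v(P) \geq 1/n$ (from the first formula, since $\max(1 - n\,v(P),0)=0$) and $v(P) \leq 2^{-k}$ (from the second), which is impossible as $2^{-k} < 1/n$. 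Hence the set is unsatisfiable, so by Fact \ref{fact:contcomp} it is inconsistent and $\{1 \dotminus nP,\ P \dotminus 2^{-k}\} \vdash^{\tiny\text{C}\L} 1$. Substituting $\varphi$ for $P$ throughout that deduction yields $\{1 \dotminus n\varphi,\ \varphi \dotminus 2^{-k}\} \vdash_Q 1$, and combining this with the two displayed consequences of $\Gamma$ gives $\Gamma \vdash_Q 1$, contradicting the consistency of $\Gamma$.

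Part (ii) follows the same template. Suppose both $\Gamma \cup \{\varphi \dotminus \psi\}$ and $\Gamma \cup \{\psi \dotminus \varphi\}$ are inconsistent. Then each proves $1$, and the Deduction Theorem yields $n,m < \omega$ with $\Gamma \vdash_Q 1 \dotminus n(\varphi \dotminus \psi)$ and $\Gamma \vdash_Q 1 \dotminus m(\psi \dotminus \varphi)$. Regarding $\varphi,\psi$ as atoms $P,Q$, any model of $\{1 \dotminus n(P \dotminus Q),\ 1 \dotminus m(Q \dotminus P)\}$ would force $v(P \dotminus Q) \geq 1/n > 0$ and $v(Q \dotminus P) \geq 1/m > 0$, i.e.\ simultaneously $v(P) > v(Q)$ and $v(Q) > v(P)$, which is absurd. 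So this finite set is unsatisfiable, hence inconsistent by Fact \ref{fact:contcomp} and proves $1$; substituting $\varphi,\psi$ for $P,Q$ and combining with the consequences of $\Gamma$ gives $\Gamma \vdash_Q 1$, again contradicting consistency.

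The routine parts are the two semantic unsatisfiability computations. The step I expect to require the most care is the transfer between the propositional and first-order levels: one must be sure that the deduction witnessing propositional inconsistency lifts, under substitution of $\varphi$ (and $\psi$) for the atoms, to a genuine first-order deduction from the stated consequences of $\Gamma$, and correspondingly that the formula $1 \dotminus n\varphi$ produced by the Deduction Theorem is exactly the substitution instance of the propositional $1 \dotminus nP$ (and likewise for the $1 \dotminus n(\varphi \dotminus \psi)$ in part (ii)). Once this bookkeeping is in place, the argument closes uniformly in both parts.
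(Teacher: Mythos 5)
Your proof is correct, and its skeleton --- reductio, the Deduction Theorem (Theorem~\ref{thm:deduc}) to extract $\Gamma\vdash_{Q}1\dotminus n\varphi$ (resp.\ $1\dotminus n(\varphi\dotminus\psi)$), and then a transfer of propositional inconsistency back to $\vdash_{Q}$ under substitution of first-order formulae for atoms --- is the same as the paper's. Where you genuinely differ is in the propositional reduction. In (i) the paper stays syntactic: it quotes an explicit Łukasiewicz permutation tautology and uses (A2) to substitute the distinct bounds $2^{-1},\ldots,2^{-m}$ for the $m$ occurrences of $\varphi$ in $1\dotminus m\varphi$, arriving at the nonzero dyadic $((1\dotminus 2^{-1})\dotminus 2^{-2})\dotminus\cdots\dotminus 2^{-m}$; you instead use a single hypothesis $\varphi\dotminus 2^{-k}$ with $2^{-k}<1/n$ and let Fact~\ref{fact:contcomp} (equivalently Fact~\ref{fact:finite}) applied to the two-element set $\{1\dotminus nP,\ P\dotminus 2^{-k}\}$ do the work --- arguably cleaner, and it needs only one member of the given family of hypotheses. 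In (ii) the paper argues asymmetrically: from the inconsistency of $\Gamma\cup\{\varphi\dotminus\psi\}$ alone it derives $\Gamma\vdash_{Q}\psi\dotminus\varphi$ outright via the tautology $(\psi\dotminus\varphi)\dotminus(1\dotminus n(\varphi\dotminus\psi))$, whereas you assume both unions inconsistent and refute a two-element propositional set semantically. The paper's version buys the strictly stronger conclusion that $\Gamma$ \emph{proves} $\psi\dotminus\varphi$, which is exactly what the later ``cf.\ proof of Lemma~\ref{lem:max}'' citations (in the Henkin proposition and in the corollaries on approximated and Pavelka-style completeness) rely on; your symmetric version proves the lemma as stated but would not directly support those reuses. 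Your explicit care about the transfer principle, about $\Gamma\vdash_{Q}1$ being equivalent to inconsistency, and about $1\dotminus n\varphi$ being the substitution instance of $1\dotminus nP$ is well placed --- the paper uses the same principles tacitly throughout Section~\ref{sec:relations}.
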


\begin{proof}
\mbox{}
\begin{itemize}
\item[(i)] Suppose $\Gamma\vdash_{Q}\varphi\dotminus 2^{-n}$ for all $n<\omega$.  For \emph{reductio ad absurdum}, assume $\Gamma\cup\{\varphi\}$ is inconsistent.  Then by the Deduction Theorem, $\Gamma\vdash1\dotminus m\varphi$ for some $m<\omega$.  Observe 
\begin{eqnarray*}
&\vdash^{\tiny\L}&\!(((\beta\dotminus\alpha_{\sigma(0)})\dotminus\alpha_{\sigma(1)})\dotminus\cdots\dotminus\alpha_{\sigma(k-1)})\dotminus (((\beta\dotminus\alpha_{0})\dotminus\alpha_{1})\dotminus\cdots\dotminus\alpha_{k-1})
\end{eqnarray*}
 for any $k<\omega$ and permutation $\sigma$ on $k$. By repeated uses of this fact, axiom (A2), and our supposition, we find that $\Gamma\vdash_{Q}((1\dotminus 2^{-1})\dotminus 2^{-2})\dotminus\cdots\dotminus 2^{-m}$, so $\Gamma$ is inconsistent, yielding a contradiction.
\item[(ii)] Suppose $\Gamma\cup\{\varphi\dotminus\psi\}$ is inconsistent.  Then by the Deduction Theorem, there is $n<\omega$ such that $\Gamma\vdash_{Q} 1\dotminus n(\varphi\dotminus\psi)$.  Observe that for all $m<\omega$, $\vdash^{\tiny\L}\! (\beta\dotminus\alpha)\dotminus(1\dotminus m(\alpha\dotminus\beta))$, whereby it follows that $\vdash_{Q} (\varphi\dotminus\psi)\dotminus(1\dotminus n(\varphi\dotminus\psi))$.  Hence, by \emph{modus ponens} we have $\Gamma\vdash_{Q}\psi\dotminus\varphi$, so $\Gamma\cup\{\psi\dotminus\varphi\}$ is consistent. 
\end{itemize}
\end{proof}

\begin{lem}
\label{lem:Partially}
Let $\Gamma\subseteq \textup{For}(\mathcal{L})$ be consistent, and let $E(\Gamma)\equals\{\Gamma^{\prime} :\Gamma\subseteq\Gamma^{\prime}\subseteq \textup{For}(\mathcal{L})\,\textup{and}\,\,\Gamma^{\prime}\,\textup{is consistent}\}$.  
 Then $\Delta$ is a maximal member of $E(\Gamma)$ if and only if for all formulae $\varphi,\psi$, 
 \mbox{}
 \begin{itemize}
 \item[(i)] $\Gamma\subseteq\Delta$ and $\Delta$ is consistent.
\item[(ii)] If $\Delta\vdash_{Q}\varphi\dotminus 2^{-n}$ for all $n<\omega$ then $\varphi\in\Delta$.
\item[(iii)] $\varphi\dotminus\psi\in\Delta$ or $\psi\dotminus\varphi\in\Delta$.
\end{itemize}
\end{lem}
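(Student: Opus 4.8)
The plan is to prove the two directions of the biconditional separately, drawing on Lemma \ref{lem:max} for the forward direction and on the characterization of inconsistency via nonzero dyadic constants for the backward direction.

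For the forward direction I would suppose $\Delta$ is a maximal member of $E(\Gamma)$. Then (i) is immediate, since $\Delta\in E(\Gamma)$ means exactly that $\Gamma\subseteq\Delta$ and $\Delta$ is consistent. For (ii), suppose $\Delta\vdash_{Q}\varphi\dotminus 2^{-n}$ for all $n<\omega$. Since $\Delta$ is consistent, Lemma \ref{lem:max}(i) gives that $\Delta\cup\{\varphi\}$ is consistent; as it contains $\Gamma$ and extends $\Delta$, it lies in $E(\Gamma)$, so maximality of $\Delta$ forces $\Delta\cup\{\varphi\}=\Delta$, i.e.\ $\varphi\in\Delta$. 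For (iii), Lemma \ref{lem:max}(ii) tells us that at least one of $\Delta\cup\{\varphi\dotminus\psi\}$ and $\Delta\cup\{\psi\dotminus\varphi\}$ is consistent; the consistent one lies in $E(\Gamma)$ and extends $\Delta$, so by maximality it equals $\Delta$, placing the corresponding formula in $\Delta$.

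For the backward direction I would assume (i)--(iii). By (i), $\Delta\in E(\Gamma)$, so it remains only to show $\Delta$ is maximal: given any consistent $\Delta'\in E(\Gamma)$ with $\Delta\subseteq\Delta'$, I would show $\Delta'\subseteq\Delta$. Fix $\varphi\in\Delta'$; the claim is that $\varphi\dotminus 2^{-n}\in\Delta$ for every $n<\omega$. Indeed, were $2^{-n}\dotminus\varphi\in\Delta$, then $2^{-n}\dotminus\varphi\in\Delta'$ as well, and since $\varphi\in\Delta'$, a single application of \emph{modus ponens} would yield $\Delta'\vdash_{Q}2^{-n}$; but $2^{-n}$ is a nonzero dyadic constant, so this contradicts the consistency of $\Delta'$. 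Hence $2^{-n}\dotminus\varphi\notin\Delta$, and by (iii) we get $\varphi\dotminus 2^{-n}\in\Delta$. Thus $\Delta\vdash_{Q}\varphi\dotminus 2^{-n}$ for all $n$, and (ii) then gives $\varphi\in\Delta$. Therefore $\Delta'\subseteq\Delta$, so $\Delta'=\Delta$ and $\Delta$ is maximal.

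The routine parts are the bookkeeping with $E(\Gamma)$ and maximality together with the direct appeals to Lemma \ref{lem:max}. The one step that deserves care, and which I expect to be the main obstacle, is the contradiction in the backward direction: that $\Delta'\vdash_{Q}2^{-n}$ is incompatible with the consistency of $\Delta'$. This rests on the first-order analogue of the observation recorded after Fact \ref{fact:finite}, namely that deriving any nonzero dyadic constant collapses a theory into inconsistency. Concretely, one strips off copies of $2^{-n}$ by repeated \emph{modus ponens} against the provable instances $\chi\dotminus m\,2^{-n}$ (valid, hence provable, once $m\geq 2^{n}$) to obtain an arbitrary formula $\chi$; the propositional facts of $\mathsection\ref{sec:bbt}$ supply provability of these instances, and substitution of first-order formulae for propositional atoms lifts them to $\vdash_{Q}$. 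Everything else then follows directly from Lemma \ref{lem:max} and \emph{modus ponens}.
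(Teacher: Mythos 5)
Your proof is correct and follows essentially the same route as the paper's: the forward direction is read off from Lemma \ref{lem:max} together with maximality, and the backward direction shows any consistent extension $\Gamma'\supseteq\Delta$ satisfies $\Gamma'\subseteq\Delta$ by noting $2^{-n}\dotminus\varphi\notin\Gamma'$ (else \emph{modus ponens} yields a nonzero dyadic, contradicting consistency), invoking (iii) to get $\varphi\dotminus2^{-n}\in\Delta$, and concluding with (ii). The paper's own argument is just a terser version of exactly this; your extra justification that proving $2^{-n}$ entails inconsistency is the same observation the paper records after Fact \ref{fact:finite}.
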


\begin{proof} \mbox{}
\begin{itemize} 
\item[($\Rightarrow$)]  If $\Delta$ is a maximal member of $E(\Gamma)$, then (i) is immediate, while (ii) and (iii) follow from Lemma \ref{lem:max}.
\item[($\Leftarrow$)] Suppose (i), (ii),  and (iii) hold.  Let $\Gamma^{\prime}\in E(\Gamma)$ be such that $\Delta\subseteq\Gamma^{\prime}$. Assume $\varphi\in\Gamma^{\prime}$. Then for every $n<\omega$, $2^{-n}\dotminus\varphi\notin\Gamma^{\prime}$ and so $\varphi\dotminus 2^{-n}\in\Delta$. It follows that $\varphi\in\Delta$, as desired.\qedhere
\end{itemize}
\end{proof}

\begin{rem}
In continuous propositional logic, conditions (ii) and (iii) are independent.  To see this, let $\mathcal{S}$ be a continuous propositional logic. First we show that (ii) does not imply (iii).  Let $\Delta\equals\{\varphi\in\mathcal{S}:\,\,\vdash^{\!\tiny{\text{C} \L}}\varphi\}$.  Observe that by completeness of continuous propositional logic (Fact \ref{fact:finite}), $\Delta$ is consistent, and if $\Delta\vdash^{\tiny\text{C}\L}\!\!\varphi\dotminus 2^{-n}$ for all $n<\omega$, then $\varphi\in \Delta$.  However, since $\nvdash^{\!\tiny{\text{C} \L}} P\dotminus\frac{1}{2}$ and $\nvdash^{\!\tiny{\text{C} \L}}\frac{1}{2}\dotminus P$, neither $P\dotminus\frac{1}{2}$ nor $\frac{1}{2}\dotminus P$ is in $\Delta$.  We now show that (iii) does not imply (ii).  Let $\Gamma\equals\{P\dotminus 2^{-n}:n<\omega\}$. Then $\Gamma$ is consistent, and so by (ii) of Lemma \ref{lem:max} we can construct a consistent set $\Delta\supseteq\Gamma$ such that for every $\varphi,\psi\in \mathcal{S}$, either $\varphi\dotminus\psi\in\Delta$ or $\psi\dotminus\varphi\in\Delta$. Nonetheless, $\Delta\vdash^{\!\tiny{\text{C} \L}}P\dotminus 2^{-n}$ for all $n<\omega$, yet $P\notin\Delta$.
\end{rem}
Lemma \ref{lem:Partially} justifies the following the definition.

\begin{defn}
\label{defn:max} Let $\Delta\subseteq \textup{For}(\mathcal{L})$.  We say that $\Delta$ is \emph{maximal consistent} if $\Delta$ is consistent and for all formulae $\varphi,\psi$,
\mbox{}
\begin{itemize}
\item[(i)] If $\Delta\vdash_{Q}\varphi\dotminus 2^{-n}$ for all $n<\omega$ then $\varphi\in\Delta$.
\item[(ii)] $\varphi\dotminus\psi\in\Delta$ or $\psi\dotminus\varphi\in\Delta$.
\end{itemize}
\end{defn}
Condition (i) says that if $\Delta$ can furnish a series of proofs which altogether indicate that $\varphi$ is a consequence of $\Delta$, then $\Delta$ must be granted $\varphi$.  Condition (ii) says that $\Delta$ can compare any two formulae.  

\begin{rem}Observe that if $\Delta$ a maximal consistent set of formulae, then $\varphi\in\Delta$ if and only if $\Delta\vdash_{Q}\varphi$.  It can be shown that in the above definition condition (i) can be replaced by condition (i$^{\prime}$): If $\{\varphi\dotminus 2^{-n}: n<\omega\}\subseteq\Delta$, then $\varphi\in\Delta$.  Indeed, condition (i$^{\prime}$) is also independent of (ii) in continuous propositional logic. 
\end{rem}

We thereby obtain the following result.
\begin{thm}
\label{thm:zorn}
If $\Gamma\subseteq \textup{For}(\mathcal{L})$ is consistent, then there exists a maximal consistent set of formulae $\Delta\supseteq\Gamma$.
  \end{thm}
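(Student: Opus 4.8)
The plan is to apply Zorn's Lemma to the partially ordered set $(E(\Gamma),\subseteq)$ introduced in Lemma \ref{lem:Partially} and then simply read off the conclusion from the characterization proved there. First I would observe that $E(\Gamma)$ is nonempty: since $\Gamma$ is consistent by hypothesis, $\Gamma$ itself is a member of $E(\Gamma)$. The real content of the argument is to verify the chain hypothesis of Zorn's Lemma, namely that every chain in $E(\Gamma)$ has an upper bound inside $E(\Gamma)$.

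So the key step is the following. Let $\mathcal{C}\subseteq E(\Gamma)$ be a nonempty chain, and set $\Gamma^{*}\equals\bigcup\mathcal{C}$. Clearly $\Gamma\subseteq\Gamma^{*}\subseteq\textup{For}(\mathcal{L})$, so it remains only to show that $\Gamma^{*}$ is consistent; it would then be the desired upper bound. Here I would lean on the finitary character of provability emphasized earlier in the paper: a formula is provable from a set just in case it is provable from some finite subset of that set. Suppose toward a contradiction that $\Gamma^{*}$ is inconsistent. Then $\Gamma^{*}\vdash_{Q}\varphi$ for every formula $\varphi$; in particular $\Gamma^{*}\vdash_{Q} 1$ (invoking the observation in $\mathsection\ref{sec:bbt}$ that inconsistency is witnessed by deriving some $d\in\mathbb{D}\setminus\{0\}$). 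By finiteness of deductions, there is a finite $\Gamma_{0}\subseteq\Gamma^{*}$ with $\Gamma_{0}\vdash_{Q}1$. Since each of the finitely many formulae of $\Gamma_{0}$ lies in some member of the chain $\mathcal{C}$, and $\mathcal{C}$ is linearly ordered by inclusion, all of $\Gamma_{0}$ is contained in a single member $\Gamma'\in\mathcal{C}$. But then $\Gamma'\vdash_{Q}1$, so $\Gamma'$ is inconsistent, contradicting $\Gamma'\in E(\Gamma)$. Hence $\Gamma^{*}$ is consistent and $\Gamma^{*}\in E(\Gamma)$.

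With the chain condition in hand, Zorn's Lemma yields a maximal member $\Delta$ of $E(\Gamma)$. To finish, I would apply the ($\Rightarrow$) direction of Lemma \ref{lem:Partially} to $\Delta$: being maximal in $E(\Gamma)$, it satisfies conditions (i), (ii), and (iii) of that lemma. Condition (i) gives $\Gamma\subseteq\Delta$ together with the consistency of $\Delta$, while conditions (ii) and (iii) are exactly the two clauses of Definition \ref{defn:max}. Therefore $\Delta$ is a maximal consistent set of formulae with $\Delta\supseteq\Gamma$, as required.

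I expect the only genuine obstacle to be the verification that the union of a chain of consistent sets is consistent; everything else is bookkeeping. That step is where the finitary nature of the deductive system does the essential work, transferring a hypothetical inconsistency of the infinite union back to a single member of the chain. Once Lemma \ref{lem:Partially} is available, translating maximality in $E(\Gamma)$ into the notion of maximal consistency defined in Definition \ref{defn:max} is immediate, so the proof is genuinely a Lindenbaum-style construction carried out with Zorn's Lemma.
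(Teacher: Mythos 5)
Your proof is correct and is precisely the argument the paper intends: the paper's proof is the one-line ``By Zorn's Lemma and Lemma \ref{lem:Partially},'' and you have simply filled in the standard details (nonemptiness of $E(\Gamma)$, consistency of the union of a chain via finiteness of deductions, and the ($\Rightarrow$) direction of Lemma \ref{lem:Partially} to match Definition \ref{defn:max}). No discrepancies.
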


\begin{proof} By Zorn's Lemma and Lemma \ref{lem:Partially}.
\end{proof}

\begin{lem}\label{lem:wit}
Let $\Gamma\subseteq \textup{For}(\mathcal{L})$, let $\Delta$ be a maximal consistent set of formulae containing $\Gamma$, let $t$ be a term, and let $p\in\mathbb{D}$.  Then if $\sup_{x}\varphi\dotminus p\in\Delta$, there is  a formula $\varphi^{\prime}$ such that $\varphi\equiv\varphi^{\prime}$ and $\varphi^{\prime}[t/x]\dotminus p\in\Delta$.
\end{lem}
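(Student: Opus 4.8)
The plan is to reduce everything to two ingredients: that renaming bound variables is harmless both semantically (Lemma \ref{lem:bdsub}) and syntactically (Lemma \ref{lem:Bound substitution}), and that $\dotminus$ is provably transitive, so that a chain of $\dotminus$-inequalities can be composed inside $\Delta$. Throughout I will use the remark following Definition \ref{defn:max}, namely that for a maximal consistent $\Delta$ one has $\psi\in\Delta$ if and only if $\Delta\vdash_Q\psi$. Thus the hypothesis reads $\Delta\vdash_Q \sup_x\varphi\dotminus p$ and the goal reads $\Delta\vdash_Q\varphi'[t/x]\dotminus p$.

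First I would produce $\varphi'$. Let $x_0,\dots,x_{m-1}$ be the variables occurring in $t$. By Lemma \ref{lem:bdsub} there is a formula $\varphi'$, obtained from $\varphi$ by a finite sequence of correct bound substitutions using fresh variables, in which none of $x_0,\dots,x_{m-1}$ is bound and with $\varphi\equiv\varphi'$; this gives the required semantic equivalence outright, and it guarantees that $\varphi'[t/x]$ is a \emph{correct} substitution. I then need the syntactic counterpart $\vdash_Q\varphi'\dotminus\varphi$: each single renaming step is covered by Lemma \ref{lem:Bound substitution}(b), and the steps are chained together by transitivity of $\dotminus$ (discussed below).

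Next I would pass to the suprema and chain. From $\vdash_Q\varphi'\dotminus\varphi$ the Generalization Theorem (Lemma \ref{lem:Genthm}, with empty context) gives $\vdash_Q\sup_x(\varphi'\dotminus\varphi)$, and the instance $(\sup_x\varphi'\dotminus\sup_x\varphi)\dotminus\sup_x(\varphi'\dotminus\varphi)$ of (A7) together with \emph{modus ponens} yields $\vdash_Q\sup_x\varphi'\dotminus\sup_x\varphi$. On the other hand (A8), applicable precisely because $\varphi'[t/x]$ is correct, gives $\vdash_Q\varphi'[t/x]\dotminus\sup_x\varphi'$. I now have three $\dotminus$-inequalities: $\varphi'[t/x]\dotminus\sup_x\varphi'$ (A8), $\sup_x\varphi'\dotminus\sup_x\varphi$ (just derived), and $\sup_x\varphi\dotminus p$ (the hypothesis, available from $\Delta$). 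Two applications of transitivity give $\Delta\vdash_Q\varphi'[t/x]\dotminus p$, hence $\varphi'[t/x]\dotminus p\in\Delta$, as desired.

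The transitivity of $\dotminus$ --- from $\vdash\alpha\dotminus\beta$ and $\vdash\beta\dotminus\gamma$ conclude $\vdash\alpha\dotminus\gamma$ --- is the one genuinely propositional ingredient, and I would obtain it from (A2): the instance $((\alpha\dotminus\gamma)\dotminus(\alpha\dotminus\beta))\dotminus(\beta\dotminus\gamma)$ together with $\beta\dotminus\gamma$ yields $(\alpha\dotminus\gamma)\dotminus(\alpha\dotminus\beta)$ by \emph{modus ponens}, and a second \emph{modus ponens} against $\alpha\dotminus\beta$ yields $\alpha\dotminus\gamma$; this transfers to $\vdash_Q$ since (A1)--(A4) are among its axioms (alternatively one invokes Fact \ref{fact:weak}). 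I expect the only real bookkeeping obstacle to be lifting the single-step Lemma \ref{lem:Bound substitution} to the whole renaming sequence of Lemma \ref{lem:bdsub} while keeping the fresh-variable hypotheses intact; everything else is a mechanical composition of provable inequalities.
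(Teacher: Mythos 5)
Your proposal is correct and follows essentially the same route as the paper's proof: produce $\varphi'$ via Lemma \ref{lem:bdsub}, derive $\vdash_Q\varphi'\dotminus\varphi$ from Lemma \ref{lem:Bound substitution} chained by (A2), lift to $\vdash_Q\sup_x\varphi'\dotminus\sup_x\varphi$ via the Generalization Theorem and (A7), and then compose with (A8) and the hypothesis using (A2)-transitivity inside $\Delta$. Your explicit unpacking of the transitivity derivation from (A2) and of the fact that $\varphi\in\Delta\Leftrightarrow\Delta\vdash_Q\varphi$ for maximal consistent $\Delta$ only makes explicit what the paper leaves implicit.
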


\begin{proof}
By Lemma \ref{lem:bdsub},  there is a formula $\varphi'$ such that no variable of $t$ is bound in $\varphi'$ and $\varphi\equiv\varphi'$, so by repeated applications of Lemma \ref{lem:Bound substitution} and (A2), we have $\vdash_{Q}\varphi'\dotminus\varphi$.  Then by the Generalization Theorem, $\vdash_{Q}\sup_{x}(\varphi'\dotminus\varphi)$, whence by (A7) it follows that $\vdash_{Q}\sup_{x}\varphi'\dotminus\sup_{x}\varphi$.  Now as $\sup_{x}\varphi\dotminus p\in\Delta$, by (A2) we have $\Delta\vdash_{Q}\sup_{x}\varphi'\dotminus p$.  Because $\varphi'[t/x]$ is correct,  again by (A2) and by (A8) we have $\Delta\vdash_{Q}\varphi'[t/x]\dotminus\sup_{x} \varphi$,  so once more by (A2) it follows that $\Delta\vdash_{Q}\varphi'[t/x]\dotminus p$. Therefore, since $\Delta$ is maximal consistent, we conclude that $\varphi'[t/x]\dotminus p\in\Delta$, as desired.    
\end{proof}

We will use the following lemma to define a continuous pre-structure and to help us in parts of our proof of completeness.  
\begin{lem}\label{lem:def}
Let $\Gamma\subseteq \textup{For}(\mathcal{L})$ be consistent, let $\Delta$ be a maximal consistent set of formulae containing $\Gamma$, and let $\varphi$ be a formula. Then\\
$$
\sup\{p\in\mathbb{D}:p\dotminus \varphi\in\Delta\}=\inf\{q\in\mathbb{D}:\varphi\dotminus q\in\Delta\}.
$$
\end{lem}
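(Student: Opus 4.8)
The plan is to abbreviate $A\equals\sup\{p\in\mathbb{D}:p\dotminus\varphi\in\Delta\}$ and $B\equals\inf\{q\in\mathbb{D}:\varphi\dotminus q\in\Delta\}$ and to prove the two inequalities $A\le B$ and $A\ge B$ separately. First I would record that both sets are nonempty, so that $A$ and $B$ are genuine reals in $[0,1]$: since the value of $\varphi$ always lies in $[0,1]$, the formulae $0\dotminus\varphi$ and $\varphi\dotminus 1$ are propositional validities, hence provable by the completeness results of \S\ref{sec:bbt} and therefore members of $\Delta$; thus $0$ lies in the first set and $1$ in the second.

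For $A\le B$ I would show that $p\le q$ whenever $p\dotminus\varphi\in\Delta$ and $\varphi\dotminus q\in\Delta$, which at once gives $\sup_p p\le\inf_q q$. Suppose toward a contradiction that $p>q$. The key observation is the propositional validity
\[
\vdash^{\tiny\text{C}\L}\!\! \bigl((p\dotminus q)\dotminus(\varphi\dotminus q)\bigr)\dotminus(p\dotminus\varphi),
\]
which one verifies by a routine check on truth values and then lifts to $\vdash_Q$ via Fact \ref{fact:finite} (treating $\varphi$ as a propositional variable, so that the propositional derivation becomes a first-order one under substitution into the schemata (A1)--(A6)). Two applications of \emph{modus ponens}, using $p\dotminus\varphi\in\Delta$ and then $\varphi\dotminus q\in\Delta$, yield $\Delta\vdash_Q p\dotminus q$. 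Since $p>q$, the formula $p\dotminus q$ is a dyadic with nonzero value, so $\Delta$ is inconsistent in the sense recorded after Fact \ref{fact:finite}, contrary to hypothesis. Hence $p\le q$, and therefore $A\le B$.

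For $A\ge B$ I would argue by contradiction, using the density of $\mathbb{D}$ together with clause (ii) of the definition of maximal consistency. If $A<B$, choose a dyadic $r$ with $A<r<B$. Then $r\dotminus\varphi\notin\Delta$ (otherwise $r$ would belong to the first set, forcing $r\le A$) and $\varphi\dotminus r\notin\Delta$ (otherwise $r$ would belong to the second set, forcing $r\ge B$). But clause (ii) of Definition \ref{defn:max}, applied to the formulae $r$ and $\varphi$, requires $r\dotminus\varphi\in\Delta$ or $\varphi\dotminus r\in\Delta$, a contradiction. Therefore $A\ge B$, and combining the two inequalities gives $A=B$.

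The hard part will be the direction $A\le B$: the inequality $A\ge B$ follows almost immediately from maximality, whereas $A\le B$ requires producing the right valid implication, confirming its provability through the black-box propositional completeness facts, and keeping track of the fact that $p\dotminus q$ is literally a nonzero element of $\mathbb{D}$, so that deriving it genuinely witnesses inconsistency.
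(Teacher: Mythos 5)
Your proof is correct and takes the same route the paper intends: the paper's own proof is just ``Straightforward, by Lemma \ref{lem:Partially},'' and your argument is precisely the expansion of that remark, using consistency (via a Łukasiewicz validity and \emph{modus ponens} to derive a nonzero dyadic) for the inequality $\sup\le\inf$ and the dichotomy clause of maximal consistency together with density of $\mathbb{D}$ for the reverse. The extra care about nonemptiness of the two sets and about $p\dotminus q$ being literally a nonzero element of $\mathbb{D}$ is exactly the right bookkeeping.
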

\begin{proof} Straightforward, by Lemma \ref{lem:Partially}.
\end{proof}

\section{Completeness Theorem}

In classical first-order logic, we say that a set $\Gamma$ of formulae is \emph{Henkin complete} if for every formula $\varphi$ and variable $x$, there is a constant $c$ such that
\begin{center}
$\neg\forall x\varphi\to\neg\varphi[c/x]\in\Gamma$.
\end{center}
The intuition to define a Henkin complete set in continuous first-order logic as the syntactic translation of this definition will not work.  Indeed, if $\Gamma$ is maximal consistent and $\neg\sup_{x}\neg\varphi\in\Gamma$, i.e., $\inf_{x}\varphi\in\Gamma$, we would be better off to not require that there is a constant $c$ such that $\varphi[c/x]\in\Gamma$. For example, a perfectly consistent situation could arise in which $\inf_{x}\varphi\in\Gamma$, there is a sequence of formulae $(\varphi[t_{i}/x]:i<\omega)$ such that $\Gamma\vdash_{Q}\varphi[t_{n}/x]\dotminus 2^{-n}$ for each $n<\omega$, yet each term $t$ has a non-zero  $p_{t}\in\mathbb{D}$ such that $\Gamma\vdash_{Q}p_{t}\dotminus\varphi[t/x]$, whereby we should certainly not require that there is a term $t$ such that $\varphi[t/x]\in\Gamma$. Rather than attempt to prevent this sort of situation from ever occurring, we wish to accommodate this sort of situation in our definition of a Henkin complete set.

\begin{defn}\label{defn:henkin} Let $\Gamma$ be a set of formulae.  We say that $\Gamma$ is \emph{Henkin complete} if
for every formula $\varphi$, variable $x$, and $p,q\in\mathbb{D}$ with $p<q$, there is a constant symbol $c$ such that
\begin{center}
 $(\sup_{x}\varphi\dotminus q)\wedge(p\dotminus\varphi[c/x])\in\Gamma$.
\end{center}
\end{defn}

The next proposition guarantees we can construct a consistent Henkin complete set.
\begin{prop}
 Let $\mathcal{L}$ be a continuous signature.  There exists a continuous signature $\mathcal{L}^{c}$ with $\mathcal{L}\subseteq\mathcal{L}^{c}$ and a set $\Gamma^{c}\subseteq For(\mathcal{L}^{c})$ such that for every $\Gamma\subseteq \textup{For}(\mathcal{L})$,
\mbox{}
\begin{itemize}
\item[(i)] $\Gamma\cup\Gamma^{c}$ is Henkin complete.
\item[(ii)] If $\Gamma$ is consistent, then $\Gamma\cup\Gamma^{c}$ is consistent.  
\end{itemize}
\end{prop}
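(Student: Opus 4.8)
The plan is to build $\mathcal{L}^c$ and $\Gamma^c$ by the usual iterated Henkin expansion, adapted to the continuous witnessing scheme of Definition \ref{defn:henkin}. First I would set $\mathcal{L}_0=\mathcal{L}$ and, given $\mathcal{L}_k$, adjoin for each triple $(\varphi,x,(p,q))$ with $\varphi\in\textup{For}(\mathcal{L}_k)$, $x$ a variable, and $p,q\in\mathbb{D}$, $p<q$, a fresh constant symbol $c_{\varphi,x,p,q}$ (constants carry no continuity data, so this is an unproblematic extension of a continuous signature); let $\mathcal{L}_{k+1}$ be the result, $\mathcal{L}^c=\bigcup_k\mathcal{L}_k$, and let $\Gamma^c$ collect all the corresponding witness formulae $(\sup_x\varphi\dotminus q)\wedge(p\dotminus\varphi[c_{\varphi,x,p,q}/x])$. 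Part (i) is then immediate: any $\psi\in\textup{For}(\mathcal{L}^c)$ lies in some $\textup{For}(\mathcal{L}_k)$, and for each $x$ and each $p<q$ the constant $c_{\psi,x,p,q}$ supplies the required member of $\Gamma^c\subseteq\Gamma\cup\Gamma^c$, regardless of $\Gamma$.

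For part (ii) I would reduce to a single-witness lemma and then to finiteness of proofs. Since a derivation uses only finitely many premises, $\Gamma\cup\Gamma^c$ is consistent as soon as $\Gamma\cup F$ is consistent for every finite $F\subseteq\Gamma^c$; ordering the finitely many witness formulae in $F$ by the stage $k$ at which their witnessing constant is introduced guarantees that, when each is appended, its constant occurs neither in $\Gamma$ nor in any formula already present. So everything comes down to the following \emph{Key Lemma}: if $\Gamma'$ is consistent, $c$ is a constant not occurring in $\Gamma'$ or in $\varphi$, and $p<q$ in $\mathbb{D}$, then $\Gamma'\cup\{H\}$ is consistent, where $H=(\sup_x\varphi\dotminus q)\wedge(p\dotminus\varphi[c/x])$. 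To prove it I would argue by contradiction: if $\Gamma'\cup\{H\}$ is inconsistent then by the Deduction Theorem (Theorem \ref{thm:deduc}) $\Gamma'\vdash_Q 1\dotminus nH$ for some $n<\omega$; the case $n=0$ gives $\Gamma'\vdash_Q 1$ outright, so assume $n\ge 1$. Because $c$ is new, Lemma \ref{lem:csub} lets me replace $c$ by a fresh variable $z$ and generalize, yielding $\Gamma'\vdash_Q\sup_z\!\big(1\dotminus nH_z\big)$ with $H_z=(\sup_x\varphi\dotminus q)\wedge(p\dotminus\varphi[z/x])$. I would then prove the validity $\vdash_Q 1\dotminus\sup_z(1\dotminus nH_z)$ as a theorem, so that one application of modus ponens gives $\Gamma'\vdash_Q 1$, contradicting consistency of $\Gamma'$.

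The crux — and the step I expect to be the main obstacle — is this last validity, which unwinds to $\vdash_Q\inf_z H_z$, i.e. to showing syntactically that $(\sup_x\varphi\dotminus q)\wedge(p\dotminus\varphi[z/x])$ has infimum $0$ over $z$. I would establish it in three provable moves: (a) push the quantifier through the connective, $\inf_z H_z\equiv(\sup_x\varphi\dotminus q)\wedge\inf_z(p\dotminus\varphi[z/x])$, the conjunct $\sup_x\varphi\dotminus q$ being $z$-free; (b) compute $\inf_z(p\dotminus\varphi[z/x])\equiv p\dotminus\sup_x\varphi$, the genuinely first-order identity, derived from (A7)--(A9) (together with Lemma \ref{lem:bdsub} for the bound renaming $\sup_x\varphi\equiv\sup_z\varphi[z/x]$) and their $\inf$-duals; and (c) the purely propositional fact $\vdash_Q(\sup_x\varphi\dotminus q)\wedge(p\dotminus\sup_x\varphi)$, which holds because $p<q$ forces $\min(\max(s-q,0),\max(p-s,0))=0$ for every $s\in[0,1]$, and is therefore provable by treating $\sup_x\varphi$ as an atom and invoking finite strong completeness for continuous propositional logic (Fact \ref{fact:finite}). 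Steps (a) and (b) are where the quantifier axioms do real work; once they are in hand, (c) closes the argument, and the Key Lemma together with the finiteness reduction yields (ii).
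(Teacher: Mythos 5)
Your construction of $\mathcal{L}^c$ and $\Gamma^c$, your proof of (i), and your reduction of (ii) to a single-witness Key Lemma all match the paper, as does the opening of that lemma's proof: the Deduction Theorem to get $\Gamma'\vdash_Q 1\dotminus nH$, then Lemma \ref{lem:csub} to trade the fresh constant for a generalized variable. The divergence --- and the gap --- is in what you do next. First, the ``unwinding'' of $\vdash_Q 1\dotminus\sup_z(1\dotminus nH_z)$ to $\vdash_Q\inf_zH_z$ is unjustified for $n\ge 2$: the former is, up to propositional manipulation, $\inf_z$ of the $n$-fold $\dotplus$-iterate of $H_z$, not of $H_z$ itself, and the implication $(1\dotminus\sup_z(1\dotminus nH_z))\dotminus\inf_zH_z$ that a modus ponens would need, while vacuously valid, is not available as a theorem without essentially the work you are trying to avoid. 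Second, and more seriously, step (b) in the direction you actually need --- $\vdash_Q\inf_z(p\dotminus\varphi[z/x])\dotminus(p\dotminus\sup_x\varphi)$ --- is exactly the ``least upper bound'' half of the behaviour of $\sup$. Axioms (A7)--(A9) give $\sup$ as an upper bound and, via the Generalization Theorem, the derived \emph{rule} ``from $\vdash\alpha\dotminus\beta$ with $x$ not free in $\beta$ infer $\vdash\sup_x\alpha\dotminus\beta$''; they do not hand you the leastness property as a provable \emph{formula}, and the ``$\inf$-duals of (A7)--(A9)'' you invoke are not axioms and would themselves have to be derived. Nor can you fall back on validity: this logic is only approximately complete for first-order validities, so the fact that $\inf_zH_z$ is valid does not make it provable, and $\vdash_Q\inf_zH_z$ remains a substantial unfilled step.

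The paper sidesteps all of this. Writing $H=\alpha\wedge\beta$ with $\alpha=\sup_x\varphi\dotminus q$ and $\beta=p\dotminus\varphi[c/x]$, it notes that $\vdash(\alpha\wedge\beta)\dotminus\alpha$ and $\vdash(\alpha\wedge\beta)\dotminus\beta$, so from $\Gamma'\vdash_Q 1\dotminus nH$ it follows that \emph{both} $\Gamma'\cup\{\alpha\}$ and $\Gamma'\cup\{\beta\}$ are inconsistent; each inconsistency is then reversed, by the propositional trick in the proof of Lemma \ref{lem:max}(ii), into $\Gamma'\vdash_Q q\dotminus\sup_x\varphi$ and $\Gamma'\vdash_Q\varphi[c/x]\dotminus p$. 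Lemma \ref{lem:csub} together with (A7), (A9) and the Generalization Theorem upgrades the second to $\Gamma'\vdash_Q\sup_y\varphi[y/x]\dotminus p$, bound substitution matches the first to $\Gamma'\vdash_Q q\dotminus\sup_y\varphi[y/x]$, and (A2) chains these into $\Gamma'\vdash_Q q\dotminus p$, a strictly positive dyadic, contradicting consistency. Everything stays at the level of provable inequalities relative to $\Gamma'$ and the primitive $\sup$ axioms; no $\inf$-formula and no quantifier-connective commutation theorem is ever proved. You should restructure your Key Lemma along these lines: split the conjunction \emph{before} generalizing over the new constant, rather than generalizing the whole witness formula and then trying to certify an $\inf$.
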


\begin{proof}  Permitting some abuse of notation, we define an increasing sequence of signatures inductively:
\mbox{}
\begin{itemize}
\item[] $\mathcal{L}_{0}\equals\mathcal{L}$
\item[] $\mathcal{L}_{n+1}\equals\mathcal{L}_{n}\cup\{c_{(\varphi,x,p,q)}:\varphi\in For(\mathcal{L}_{n}),x\in V,\text{ and } p,q\in\mathbb{D} \text{ with } p<q\}$ 
\end{itemize}
where each $c_{(\varphi,x,p,q)}$ is a new constant symbol.
We then put
\begin{itemize}
\item[] $\mathcal{L}^{c}\equals\bigcup_{n<\omega}\!\mathcal{L}_{n}$
\item[] $\psi_{(\varphi,x,p,q)}\equals(\sup_{x}\varphi\dotminus q)\wedge(p\dotminus\varphi[c_{(\varphi,x,p,q)}/x])$ and
\item[] $\Gamma^{c}\equals\{\psi_{(\varphi,x,p,q)}: \varphi\in For(\mathcal{L}^{c}),x\in V,\text{ and } p,q\in\mathbb{D} \text{ with } p<q\}$.
\end{itemize}
(i) is obviously true, so we proceed to establish (ii).

Suppose $\Gamma$ is consistent.  For \emph{reductio ad absurdum}, assume $\Gamma\cup\Gamma^{c}$ is inconsistent.  Then there is a minimal finite non-empty $\Gamma^{c}_{0}\subseteq\Gamma^{c}$ such that $\Gamma\cup\Gamma^{c}_{0}$ is inconsistent.  We may write $\Gamma^{c}_{0}=\{\psi_{(\varphi_{i},x_{i},p_{i},q_{i})}:i<n\}$, where the $(\varphi_{i},x_{i},p_{i},q_{i})$ are pairwise distinct.  For each $i<n$, there is a minimal $m_{i}$ such that $\varphi_{i}\in For(\mathcal{L}_{m_{i}})$. Let $m_{i_{0}}$ be the maximal such $m_{i}$.  For simplicity of notation, put $\varphi\equals\varphi_{i_0}$, $x\equals x_{i_0}$, $p\equals p_{i_0}$, $q\equals q_{i_0}$, and $m\equals m_{i_0}$.

Now observe that for each $i<n$, $\varphi_{i}\in For(\mathcal{L}_{m})$, while $c_{(\varphi,x,p,q)}\in\mathcal{L}_{m+1}\backslash\mathcal{L}_{m}$.  Put $\Gamma^{c}_{1}\equals\Gamma^{c}_{0}\backslash\{\psi_{(\varphi,x,p,q)}\}$ and $\Gamma_{1}\equals\Gamma\cup\Gamma^{c}_{1}$.  Note that $c_{(\varphi,x,p,q)}$  does not occur in $\Gamma_{1}$ and that $\Gamma_{1}$ must be consistent (by the minimality of $\Gamma^{c}_{0}$).  Nonetheless, $\Gamma\cup\Gamma^{c}_{0}=\Gamma_{1}\cup\{(\sup_{x}\varphi\dotminus q)\wedge(p\dotminus\varphi[c_{(\varphi,x,p,q)}/x])\}$ is inconsistent. Observe that $\vdash^{\tiny{\!\L}}(\alpha\wedge\beta)\dotminus\alpha$ and $\vdash^{\tiny{\!\L}}(\alpha\wedge\beta)\dotminus\beta$.  Thus, by \emph{modus ponens}, 
\begin{eqnarray*}
\Gamma_{1}\cup\{(\sup\!_{x}\varphi\dotminus q)\}&\vdash_{Q}&(\sup\!_{x}\varphi\dotminus q)\wedge(p\dotminus\varphi[c_{(\varphi,x,p,q)}/x]),\,\,\,\text{and}\\ 
\Gamma_{1}\cup\{(p\dotminus\varphi[c_{(\varphi,x,p,q)}/x])\}&\vdash_{Q}&(\sup\!_{x}\varphi\dotminus q)\wedge(p\dotminus\varphi[c_{(\varphi,x,p,q)}/x]).
\end{eqnarray*}
 Since $\Gamma\cup\Gamma^{c}_{0}$ is inconsistent, by the Deduction Theorem there is $n<\omega$ such that $\Gamma_{1}\vdash_{Q}1\dotminus n((\sup_{x}\varphi\dotminus q)\wedge(p\dotminus\varphi[c_{(\varphi,x,p,q)}/x]))$. Therefore, by \emph{modus ponens}, we find that  $\Gamma_{1}\cup\{(\sup_{x}\varphi\dotminus q)\}$ and $\Gamma_{1}\cup\{(p\dotminus\varphi[c_{(\varphi,x,p,q)}/x])\}$ are both inconsistent. It follows that $\Gamma_{1}\vdash_{Q}(q\dotminus \sup_{x}\varphi)$ and $\Gamma_{1}\vdash_{Q}(\varphi[c_{(\varphi,x,p,q)}/x]\dotminus p)$ (cf. proof of Lemma \ref{lem:max}).  

Now let $y$ be a variable not occurring in $\varphi$.  On the one hand, by Lemma \ref{lem:Bound substitution} we have $\Gamma_{1}\vdash_{Q}(q\dotminus \sup_{x}\varphi)\{y/x\}\dotminus (q\dotminus\sup_{x}\varphi)$, so by \emph{modus ponens}, 
\begin{eqnarray*}
\Gamma_{1}\vdash_{Q}(q\dotminus\sup\!_{x}\varphi)\{y/x\}&=&(q\{y/x\}\dotminus \sup\!_{y}\varphi[y/x])\\
&=&(q\dotminus\sup\!_{y}\varphi[y/x]).
\end{eqnarray*}
 On the other hand, because $c_{(\varphi,x,p,q)}$  does not occur in $\Gamma_{1}$, by Lemma \ref{lem:csub} we have 
 \begin{eqnarray*}
 \Gamma_{1}\vdash_{Q}\sup\!_{y}(\varphi[c_{(\varphi,x,p,q)}/x]\dotminus p)[y/c_{(\varphi,x,p,q)}]&=&\sup\!_{y}(\varphi[c_{(\varphi,x,p,q)}/x][y/c_{(\varphi,x,p,q)}]\dotminus p[y/c_{(\varphi,x,p,q)}])\\
 &=&\sup\!_{y}(\varphi[y/x]\dotminus p). 
 \end{eqnarray*}
 Then by (A7) and \emph{modus ponens}, $\Gamma_{1}\vdash_{Q}\sup_{y}\varphi[y/x]\dotminus \sup_{y} p$.  Since $y$ is not free in $p$, (A9) yields $\Gamma_{1}\vdash_{Q}\sup_{y} p\dotminus p$.  Thus by (A2) and \emph{modus ponens}, $\Gamma_{1}\vdash_{Q}\sup_{y}\varphi[y/x]\dotminus p$.

It remains to observe that since $\Gamma_{1}\vdash_{Q}(q\dotminus\sup_{y}\varphi[y/x])$ and $\Gamma_{1}\vdash_{Q}\sup_{y}\varphi[y/x]\dotminus p$,  by (A2) we have $\Gamma_{1}\vdash_{Q}q\dotminus p$, so $\Gamma_{1}$ is inconsistent, which is impossible ($q\dotminus p\in\mathbb{D}$ and $q\dotminus p>0$ by construction).   
\end{proof}

\begin{defn}  We define the \emph{rank} of $\varphi$, $\emph{rank}(\varphi)$, by recursion:
\mbox{}
\begin{itemize}
\item[] $\emph{rank}(Pt_{0}\cdots t_{n_{P}-1})\equals0$.
\item[] $\emph{rank}(\frac{1}{2}\varphi)\equals \emph{rank}(\varphi)+1$.
\item[] $\emph{rank}(\neg\varphi)\equals \emph{rank}(\varphi)+1$.
\item[] $\emph{rank}(\varphi\dotminus\psi)\equals\emph{rank}(\varphi)+\emph{rank}(\psi)+1$.
\item[] $\emph{rank}(\sup_{x}\varphi)\equals\emph{rank}(\varphi)+1$.
\end{itemize}
\end{defn}

\begin{thm}\label{thm:precomp}
Let $\mathcal{L}$ be a continuous signature (possibly with a metric), and let $\Gamma\subseteq \textup{For}(\mathcal{L})$. Assume $\Gamma$ is consistent.  Then there is a continuous signature $\mathcal{L}^{c}\supseteq\mathcal{L}$, a maximal consistent  and Henkin complete set $\Delta\subseteq For(\mathcal{L}^{c})$ with $\Delta\supseteq\Gamma$, a continuous $\mathcal{L}^{c}$-pre-structure $\mathfrak{M}$, and an $\mathfrak{M}$-assignment $\sigma$ such that $(\mathfrak{M},\sigma)\vDash_{Q}\Delta$.
\end{thm}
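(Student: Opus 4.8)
The plan is to carry out a Henkin-style term-model construction. First I would invoke the preceding Proposition to obtain a signature $\mathcal{L}^c\supseteq\mathcal{L}$ and a set $\Gamma^c\subseteq\textup{For}(\mathcal{L}^c)$ for which $\Gamma\cup\Gamma^c$ is Henkin complete and, since $\Gamma$ is consistent, consistent. Applying Theorem \ref{thm:zorn} to $\Gamma\cup\Gamma^c$ yields a maximal consistent $\Delta\subseteq\textup{For}(\mathcal{L}^c)$ with $\Delta\supseteq\Gamma\cup\Gamma^c$; because Henkin completeness only asserts membership of certain formulae and $\Gamma^c\subseteq\Delta$, the set $\Delta$ is still Henkin complete. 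Write $\varphi_\Delta$ for the common value furnished by Lemma \ref{lem:def}. I would first record two facts: the set $\{q\in\mathbb{D}:\varphi\dotminus q\in\Delta\}$ is upward closed (from the provable $(\varphi\dotminus q)\dotminus(\varphi\dotminus q')$ for $q'<q$ and \emph{modus ponens}), and $\varphi\in\Delta$ if and only if $\varphi_\Delta=0$ (the forward direction because $p\dotminus\varphi\in\Delta$ with $p>0$ would make $\Delta$ inconsistent, the reverse because $\varphi_\Delta=0$ gives $\varphi\dotminus 2^{-n}\in\Delta$ for all $n$, so condition (i) of maximal consistency places $\varphi$ in $\Delta$).

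Next I would define the pre-structure $\mathfrak{M}$ whose universe $M$ is the set of all $\mathcal{L}^c$-terms, with $f^{\mathfrak{M}}(t_0,\ldots,t_{n_f-1})\equals ft_0\cdots t_{n_f-1}$, with $P^{\mathfrak{M}}(\bar t)\equals(P\bar t)_\Delta$, and, in the metric case, $d^{\mathfrak{M}}(t_0,t_1)\equals(dt_0t_1)_\Delta$; the assignment is $\sigma(x)\equals x$, so that $t^{\mathfrak{M},\sigma}=t$ for every term by induction on terms. That $d^{\mathfrak{M}}$ is a pseudo-metric and that each $f^{\mathfrak{M}},P^{\mathfrak{M}}$ carries the prescribed moduli of uniform continuity follows from (A10)--(A12) and (A13)--(A14) respectively, exactly as in the discussion following (A14), since the generalizations of those axioms lie in $\Delta$ and hence their relevant substitution instances have value $0$. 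Before the main induction I would establish the key algebraic fact that $\varphi\mapsto\varphi_\Delta$ is a truth assignment in the sense of continuous propositional logic: treating atomic formulae and formulae $\sup_x\psi$ as propositional atoms, the consistency of $\Delta$ in continuous first-order logic forces its consistency as a continuous propositional theory (every propositional deduction is a $\vdash_Q$-deduction), so by Fact \ref{fact:contcomp} there is a truth assignment $w\vDash\Delta$; comparing $w$ against the supremum and infimum defining $\varphi_\Delta$ gives $w(\varphi)=\varphi_\Delta$ for all $\varphi$, whence $\varphi\mapsto\varphi_\Delta$ respects $\dotminus$, $\neg$, and $\tfrac12$.

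The heart of the argument is the Truth Lemma, $\mathfrak{M}(\varphi,\sigma)=\varphi_\Delta$, proved by induction on $\textup{rank}(\varphi)$. The atomic case is immediate from the definition of $\mathfrak{M}$ and $t^{\mathfrak{M},\sigma}=t$, and the connective cases combine the induction hypothesis with the truth-assignment property just established. The quantifier case $\varphi=\sup_x\psi$, where $\mathfrak{M}(\sup_x\psi,\sigma)=\sup\{\mathfrak{M}(\psi,\sigma^t_x):t\in M\}$, is where the real work lies; writing $r\equals(\sup_x\psi)_\Delta$, I would prove both inequalities against $r$. For $\mathfrak{M}(\psi,\sigma^t_x)\le r$: given a term $t$ and a dyadic $p>r$, upward-closedness gives $\sup_x\psi\dotminus p\in\Delta$, so Lemma \ref{lem:wit} supplies $\psi'\equiv\psi$ with $\psi'[t/x]$ correct and $\psi'[t/x]\dotminus p\in\Delta$; then the Substitution Lemma \ref{lem:inter} and the induction hypothesis yield $\mathfrak{M}(\psi,\sigma^t_x)=\mathfrak{M}(\psi',\sigma^t_x)=(\psi'[t/x])_\Delta\le p$, and letting $p\downarrow r$ gives the bound. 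For the reverse inequality I would use Henkin completeness: given $\epsilon>0$, pick dyadic $p<q$ with $r-\epsilon<p<q<r$; Henkin completeness provides a constant $c$ with $(\sup_x\psi\dotminus q)\wedge(p\dotminus\psi[c/x])\in\Delta$, so the minimum of $(\sup_x\psi\dotminus q)_\Delta=r-q>0$ and $(p\dotminus\psi[c/x])_\Delta$ is $0$, forcing $p\le(\psi[c/x])_\Delta=\mathfrak{M}(\psi,\sigma^c_x)$. Hence the supremum exceeds $r-\epsilon$ for every $\epsilon$, and $\mathfrak{M}(\sup_x\psi,\sigma)=r$.

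Finally, since $\Delta\supseteq\Gamma$ and every $\varphi\in\Delta$ satisfies $\varphi_\Delta=0$, the Truth Lemma gives $\mathfrak{M}(\varphi,\sigma)=0$ for all $\varphi\in\Delta$, that is, $(\mathfrak{M},\sigma)\vDash_Q\Delta$, which is the conclusion. I expect the quantifier case to be the principal obstacle: one must simultaneously manage correctness of substitution through bound-variable renaming (Lemmas \ref{lem:bdsub} and \ref{lem:wit}) and extract from Henkin completeness a witness of nearly optimal value. The verification that $\varphi\mapsto\varphi_\Delta$ is a genuine truth assignment, while it is precisely what renders the connective cases painless, is the other place where care and the propositional black-box facts of $\mathsection\ref{sec:bbt}$ are needed.
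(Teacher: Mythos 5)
Your proposal is correct and follows essentially the same route as the paper: the same Henkinization and Zorn step, the same term model with $P^{\mathfrak{M}}(\bar t)=\sup\{p\in\mathbb{D}:p\dotminus P\bar t\in\Delta\}$ and $\sigma(x)=x$, and the same induction on rank, with the quantifier case handled exactly as in the paper's proof (Lemma \ref{lem:wit} plus the Substitution Lemma for the upper bound, Henkin completeness for the lower bound). The only real divergence is that you dispatch all the propositional connective cases at once by extracting a truth assignment from Fact \ref{fact:contcomp} and showing it must coincide with $\varphi\mapsto\varphi_\Delta$, whereas the paper verifies each connective case separately via Fact \ref{fact:finite} and specific propositional tautologies --- a tidier packaging of the same content, and you additionally make explicit the verification (via (A10)--(A14)) that the term model is a legitimate pre-structure in the metric case, which the paper leaves tacit.
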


\begin{proof}
 Suppose $\Gamma$ is consistent.  By the previous proposition, there is $\Gamma^{c}\subseteq\ For(\mathcal{L}^{c})$ such that $\Gamma\cup\Gamma^{c}$ is consistent and Henkin complete, so by Theorem \ref{thm:zorn} there is a maximal consistent $\Delta\subseteq For(\mathcal{L}^{c})$ such that $\Delta\supseteq\Gamma\cup\Gamma^{c}$.  Define a continuous $\mathcal{L}^{c}$-pre-structure $\mathfrak{M}$ such that $M$ is the set of all $\mathcal{L}^{c}$-terms. 
\mbox{}
\begin{itemize} 
\item[(i)] For each $f\in\mathcal{F}$, define $f^{\mathfrak{M}}:M^{n_{f}}\to M$ by setting \\
$f^{\mathfrak{M}}(t_{0},\ldots,t_{n_{f}-1})\equals ft_{0}\cdots t_{n_{f}-1}$ for all $t_{0},\ldots,t_{n_{f}-1}\in M.$
\item[(ii)] For each $P\in\mathcal{R}$, define  $P^{\mathfrak{M}}:M^{n_{P}}\to[0,1]$ by setting \\
$P^{\mathfrak{M}}(t_{0},\ldots,t_{n_{P}-1})\equals\sup\{p\in\mathbb{D}:p\dotminus Pt_{0}\cdots t_{n_{P}-1}\in\Delta\}$\\ for all $t_{0},\ldots,t_{n_{P}-1}\in M$.
\end{itemize}
Define an $\mathfrak{M}$-assignment $\sigma$ by setting $\sigma(x)\equals x$ for all $x\in V$.  It is a simple matter to check that $t^{\mathfrak{M},\sigma}=t$ for all terms $t$. 

It suffices to prove by induction on the number of quantifiers and connectives in $\varphi$, i.e., the rank of $\varphi$, 
that $\mathfrak{M}(\varphi,\sigma)=\sup\{p\in\mathbb{D}:p\dotminus\varphi \in\Delta\}$ for all $\varphi\in For(\mathcal{L}^{c})$.  For the sake of brevity, here we only consider the subcases $\varphi=\frac{1}{2}\psi$ and $\varphi=\sup_{x}\psi$.  Although the case for $\emph{rank}(\varphi)=0$ is trivial, we encourage the reader to work out the subcases of $\neg$ and $\dotminus$ for $\emph{rank}(\varphi)>0$.\\

\noindent $\varphi=\frac{1}{2}\psi$:  We wish to show that $\mathfrak{M}(\frac{1}{2}\psi,\sigma)=\sup\{p\in\mathbb{D}:p\dotminus\frac{1}{2}\psi \in\Delta\}$. By definition, $\mathfrak{M}(\frac{1}{2}\psi,\sigma)=\frac{1}{2}\sup\{p\in\mathbb{D}:p\dotminus\psi \in\Delta\}$, and by induction hypothesis, $\mathfrak{M}(\psi,\sigma)=\sup\{p\in\mathbb{D}:p\dotminus\psi \in\Delta\}$.  We must therefore show $\frac{1}{2}\sup\{p\in\mathbb{D}:p\dotminus\psi \in\Delta\}=\sup\{p\in\mathbb{D}:p\dotminus\frac{1}{2}\psi \in\Delta\}$.  We consider two cases.
\mbox{}
\begin{itemize}
\item[(a)] By Fact \ref{fact:finite}, since $\Delta$ is maximal consistent, for every $p\in\mathbb{D}$, $p\dotminus\frac{1}{2}\psi\in\Delta$ only if $\neg (\neg p\dotminus p)\dotminus\psi\in \Delta$.  Hence, for every $p\in\mathbb{D}$ such that $p\dotminus\frac{1}{2}\psi\in\Delta$, $p\leq\frac{1}{2}\neg(\neg p\dotminus p)\leq\frac{1}{2}\sup\{p\in\mathbb{D}:p\dotminus\psi \in\Delta\}$.  It follows that $\sup\{p\in\mathbb{D}:p\dotminus\frac{1}{2}\psi \in\Delta\}\leq\frac{1}{2}\sup\{p\in\mathbb{D}:p\dotminus\psi \in\Delta\}$.

\item[(b)] By Fact \ref{fact:finite}, since $\Delta$ is maximal consistent, for every $p\in\mathbb{D}$, $p\dotminus\psi\in\Delta$ only if $\frac{1}{2}p\dotminus\frac{1}{2}\psi\in\Delta$ (cf. proof of Lemma \ref{lem:Bound substitution}).   Hence, for every $p\in\mathbb{D}$ such that $p\dotminus\psi\in\Delta$, $\frac{1}{2}p\leq\sup\{p\in\mathbb{D}:p\dotminus\frac{1}{2}\psi\in\Delta\}$ and therefore $p\leq2\sup\{p\in\mathbb{D}:p\dotminus\frac{1}{2}\psi\in\Delta\}$.  It follows that $\sup\{p\in\mathbb{D}:p\dotminus\psi \in\Delta\}\leq2\sup\{p\in\mathbb{D}:p\dotminus\frac{1}{2}\psi \in\Delta\}$, whence $\frac{1}{2}\sup\{p\in\mathbb{D}:p\dotminus\psi \in\Delta\}\leq\sup\{p\in\mathbb{D}:p\dotminus\frac{1}{2}\psi \in\Delta\}$, as desired.
\end{itemize}

 \noindent $\varphi=\sup_{x}\psi$: We wish to show that $\mathfrak{M}(\sup_{x}\psi,\sigma)=\sup\{p\in\mathbb{D}:p\dotminus\sup_{x}\psi \in\Delta\}$.  Observe that by definition $\mathfrak{M}(\sup_{x}\psi,\sigma)=\sup\{\mathfrak{M}(\psi,\sigma^{t}_{x}): t\in M\}$.  So it suffices to show that $\sup\{\mathfrak{M}(\psi,\sigma^{t}_{x}): t\in M\}=\sup\{p\in\mathbb{D}:p\dotminus\sup_{x}\psi \in\Delta\}$. For \emph{reductio ad absurdum}, assume this equality fails to hold.  We of course consider two cases.
\mbox{}
\begin{itemize}
\item[(a)] Suppose $\sup\{\mathfrak{M}(\psi,\sigma^{t}_{x}): t\in M\}<\sup\{p\in\mathbb{D}:p\dotminus\sup_{x}\psi \in\Delta\}$.  Then for some $p,q\in\mathbb{D}$,  $\sup\{\mathfrak{M}(\psi,\sigma^{t}_{x}): t\in M\}<p<q<\sup\{p\in\mathbb{D}:p\dotminus\sup_{x}\psi \in\Delta\}$.    Since $\Gamma^{c}\subseteq\Delta$ and $p<q$, there is a constant $c$ such that $(\sup_{x}\psi\dotminus q)\wedge(p\dotminus\psi[c/x])\in\Delta$. Furthermore, there is $r\in\mathbb{D}$ such that $q<r$ and $r\dotminus\sup_{x}\psi\in\Delta$. As $\Delta$ is maximal consistent, by Fact \ref{fact:finite} it follows that $p\dotminus\psi[c/x]\in\Delta$.  Since $\emph{rank}(\psi[c/x])<\emph{rank}(\varphi)$, our induction hypothesis tells us that $\mathfrak{M}(\psi[c/x],\sigma)=\sup\{p\in\mathbb{D}:p\dotminus\psi[c/x] \in\Delta\}$.  Also, since $\psi[c/x]$ is correct, by Lemma \ref{lem:inter} it follows that $\mathfrak{M}(\psi[c/x],\sigma)=\mathfrak{M}(\psi,\sigma^{c}_{x})$.  Thus $p\leq\mathfrak{M}(\psi,\sigma^{c}_{x})\leq\sup\{\mathfrak{M}(\psi,\sigma^{t}_{x}): t\in M\}$, yielding a contradiction.
\item[(b)] Suppose $\sup\{p\in\mathbb{D}:p\dotminus\sup_{x}\psi \in\Delta\}<\sup\{\mathfrak{M}(\psi,\sigma^{t}_{x}): t\in M\}$.  On the one hand, since $\Delta$ is maximal consistent, for all $p\in\mathbb{D}$ such that $p>\sup\{p\in\mathbb{D}:p\dotminus\sup_{x}\psi \in\Delta\}$, $\sup_{x} \psi\dotminus p\in\Delta$.  On the other hand, there must be $t\in M$ such that $\mathfrak{M}(\psi,\sigma^{t}_{x})>\sup\{p\in\mathbb{D}:p\dotminus\sup_{x}\psi \in\Delta\}$, so there is $p\in\mathbb{D}$ such that $\mathfrak{M}(\psi,\sigma^{t}_{x})>p>\sup\{p\in\mathbb{D}:p\dotminus\sup_{x}\psi \in\Delta\}$.  Thus $\sup_{x} \psi\dotminus p\in\Delta$, whence by Lemma \ref{lem:wit}, there is $\psi'\equiv\psi$ such that $\psi'[t/x]\dotminus p\in\Delta$.  Thus $\inf\{p:\psi'[t/x]\dotminus p\in\Delta\}\leq p$. By Lemma \ref{lem:def}, $\inf\{p:\psi'[t/x]\dotminus p\in\Delta\}=\sup\{p\in\mathbb{D}:p\dotminus\psi'[t/x]\in\Delta\}$, and because $rank(\psi'[t/x])<\emph{rank}(\varphi)$, our induction hypothesis tells us that $\mathfrak{M}(\psi'[t/x],\sigma)=\sup\{p\in\mathbb{D}:p\dotminus\psi'[t/x]\in\Delta\}$.  Moreover, since $\psi'[t/x]$ is correct, $\mathfrak{M}(\psi'[t/x],\sigma)=\mathfrak{M}(\psi',\sigma^{t}_{x})$ by Lemma \ref{lem:inter}.  Finally, since $\psi'\equiv\psi$, we have $\mathfrak{M}(\psi',\sigma^{t}_{x})=\mathfrak{M}(\psi,\sigma^{t}_{x})$.  It follows that $\mathfrak{M}(\psi,\sigma^{t}_{x})\leq p,$ yielding another contradiction.     
\end{itemize} 
Now if $\varphi\in\Delta$, then by (A1), $\varphi\dotminus 0\in\Delta$, so $\mathfrak{M}(\varphi,\sigma)=\sup\{p\in\mathbb{D}:p\dotminus\varphi \in\Delta\}=\inf\{q\in\mathbb{D}:\varphi\dotminus q \in\Delta\}=0$ (Lemma \ref{lem:def}).  This shows that $(\mathfrak{M},\sigma)\vDash_{Q}\Delta$, proving our theorem. 
\end{proof}

We may now state and prove the long-awaited completeness theorem.
\begin{thm}[Completeness for Continuous First-Order Logic]\label{thm:comp}\,\ \\
Let $\mathcal{L}$ be a continuous signature, and let $\Gamma\subseteq \textup{For}(\mathcal{L})$.  Then $\Gamma$ is consistent only if $\Gamma$ is satisfiable.  Furthermore, if $\mathcal{L}$ is a continuous signature with a metric, then $\Gamma$ is consistent only if $\Gamma$ is completely satisfiable.
\end{thm}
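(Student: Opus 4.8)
The plan is to harvest Theorem \ref{thm:precomp}, which already carries out the Henkin construction and produces a term model; once that is in hand the completeness theorem is essentially a corollary obtained by restricting signatures and, in the metric case, by completing the metric. Suppose $\Gamma$ is consistent. First I would invoke Theorem \ref{thm:precomp} to obtain a continuous signature $\mathcal{L}^{c}\supseteq\mathcal{L}$, a maximal consistent and Henkin complete set $\Delta\subseteq For(\mathcal{L}^{c})$ with $\Gamma\subseteq\Delta$, a continuous $\mathcal{L}^{c}$-pre-structure $\mathfrak{M}$, and an $\mathfrak{M}$-assignment $\sigma$ with $(\mathfrak{M},\sigma)\vDash_{Q}\Delta$. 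Since $\Gamma\subseteq\Delta$, this already gives $(\mathfrak{M},\sigma)\vDash_{Q}\Gamma$, albeit in the expanded signature $\mathcal{L}^{c}$.

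To settle the first assertion I would pass to the $\mathcal{L}$-reduct of $\mathfrak{M}$, which is a continuous $\mathcal{L}$-pre-structure. Because the value $\mathfrak{M}(\varphi,\sigma)$ of an $\mathcal{L}$-formula $\varphi$ is computed from the interpretations of the symbols of $\mathcal{L}$ alone, forgetting the symbols of $\mathcal{L}^{c}$ not in $\mathcal{L}$ leaves these values unchanged; hence the reduct together with $\sigma$ still models $\Gamma$, so $\Gamma$ is satisfiable. For the metric case I would first observe that if $\mathcal{L}$ carries a metric then so does $\mathcal{L}^{c}$, and that the term model furnished by Theorem \ref{thm:precomp} is then a genuine pre-structure with a metric: since $\Delta$ is deductively closed, the generalizations of (A10)--(A14) lie in $\Delta$ and therefore take value $0$ under $\mathfrak{M}$, which (cf.\ the discussion following (A14)) is exactly what forces $d^{\mathfrak{M}}$ to be a pseudo-metric meeting conditions (iii)--(v) of Definition \ref{defn:structure}. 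I would then apply Theorem \ref{thm:completion} to obtain a continuous $\mathcal{L}^{c}$-structure $\widehat{\mathfrak{M}}$ and an elementary $\mathcal{L}^{c}$-morphism $h$ of $\mathfrak{M}$ into $\widehat{\mathfrak{M}}$. Elementarity yields $\widehat{\mathfrak{M}}(\varphi,h\circ\sigma)=\mathfrak{M}(\varphi,\sigma)$ for every formula $\varphi$, so $(\widehat{\mathfrak{M}},h\circ\sigma)\vDash_{Q}\Gamma$; passing once more to the $\mathcal{L}$-reduct, which is now a complete $\mathcal{L}$-structure, shows that $\Gamma$ is completely satisfiable.

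Since the substantive labor has been discharged in Theorem \ref{thm:precomp}, I do not expect a serious obstacle here. The one point deserving care is the verification that the term model is a legitimate pre-structure with a metric, so that the completion theorem of \S\ref{sec:sub} may be applied without altering the signature; this is precisely where the congruence-and-continuity axioms (A10)--(A14) and the deductive closure of the maximal consistent set $\Delta$ do their work, and it is the only step I would treat with any care.
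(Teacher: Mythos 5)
Your proposal is correct and follows essentially the same route as the paper's own proof: invoke Theorem \ref{thm:precomp} for the Henkin term model, restrict to the original signature in the non-metric case, and apply Theorem \ref{thm:completion} in the metric case to pass from the pseudo-metric pre-structure to a complete $\mathcal{L}$-structure. The only (immaterial) difference is that you complete in $\mathcal{L}^{c}$ and then take the $\mathcal{L}$-reduct, whereas the paper restricts to $\mathcal{L}$ first and then completes; your extra remarks on why (A10)--(A14) in $\Delta$ make the term model a legitimate pre-structure with a pseudo-metric are a welcome elaboration of a point the paper leaves implicit.
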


\begin{proof}  By Theorem \ref{thm:precomp}, there is a continuous signature $\mathcal{L}^{c}\supseteq\mathcal{L}$, a maximal consistent set $\Delta\subseteq For(\mathcal{L}^{c})$ with $\Delta\supseteq\Gamma$, an $\mathcal{L}^{c}$-pre-structure $\mathfrak{M}$, and an $\mathfrak{M}$-assignment $\sigma$ such that $(\mathfrak{M},\sigma)\vDash_{Q}\Delta$.   If $\mathcal{L}$ does not have a metric, we are done; we observe  $(\mathfrak{M},\sigma)$ models $\Gamma$ and simply restrict $\mathfrak{M}$ to our original signature.  But if $\mathcal{L}$ has a metric $d$, we can only guarantee that $d^{\mathfrak{M}}$ is a pseudo-metric relative to this restriction.  We wish to find a continuous $\mathcal{L}$-structure $\widehat{\mathfrak{M}}$ and an $\widehat{\mathfrak{M}}$-assignment $\widehat{\sigma}$ such that $(\widehat{\mathfrak{M}},\widehat{\sigma})\vDash_{Q}\Gamma$.  By Theorem \ref{thm:completion}, there is a continuous $\mathcal{L}$-structure $\mathfrak{\widehat{M}}$ and an $\mathcal{L}$-morphism $\widehat{h}:M\to\widehat{M}$ such that for every formula $\varphi$, $\mathfrak{M}(\varphi,\sigma)=\mathfrak{\widehat{M}}(\varphi,h\circ\sigma).$  Therefore, putting $\widehat{\sigma}\equals h\circ\sigma$, since $(\mathfrak{M},\sigma)$ models $\Gamma$, $(\mathfrak{\widehat{M}},\widehat{\sigma})$ models $\Gamma$.
\end{proof}

We conclude with several corollaries.  The following result has a counterpart in \cite{Hay:1963} (see also \cite{Hajek:FuzzyLogic}).

\begin{cor}[Approximated Strong Completeness for Continuous Logic] 
Let $\mathcal{L}$ be a continuous signature, let$\Gamma\subseteq \textup{For}(\mathcal{L})$, and let $\varphi$ be a formula.  Then $\Gamma\vDash_{Q}\varphi$ if and only if $\Gamma\vdash_{Q}\varphi\dotminus 2^{-n}$ for all $n<\omega$.  Moreover, if $\mathcal{L}$ is a continuous signature with a metric, then $\Gamma\vDash_{QC}\varphi$ if and only if $\Gamma\vdash_{Q}\varphi\dotminus 2^{-n}$ for all $n<\omega$.
\end{cor}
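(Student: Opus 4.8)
The plan is to deduce this corollary from the Soundness Theorem (Theorem \ref{prop:sound}) and the Completeness Theorem (Theorem \ref{thm:comp}), in exact parallel with the way the propositional Fact \ref{fact:approx} is obtained from Fact \ref{fact:contcomp}. The metric case will then need no separate work: part (i) of the corollary following Theorem \ref{thm:completion} gives $\Gamma\vDash_{Q}\varphi$ if and only if $\Gamma\vDash_{QC}\varphi$, so the ``Moreover'' clause is immediate from the first equivalence. I therefore concentrate on showing $\Gamma\vDash_{Q}\varphi$ if and only if $\Gamma\vdash_{Q}\varphi\dotminus 2^{-n}$ for all $n<\omega$.

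The direction from right to left is pure soundness. If $\Gamma\vdash_{Q}\varphi\dotminus 2^{-n}$ for every $n$, then by Theorem \ref{prop:sound} we have $\Gamma\vDash_{Q}\varphi\dotminus 2^{-n}$ for every $n$. Fixing any model $(\mathfrak{M},\sigma)\vDash_{Q}\Gamma$, we get $\mathfrak{M}(\varphi\dotminus 2^{-n},\sigma)=\max(\mathfrak{M}(\varphi,\sigma)-2^{-n},0)=0$, hence $\mathfrak{M}(\varphi,\sigma)\le 2^{-n}$ for each $n$; letting $n\to\infty$ forces $\mathfrak{M}(\varphi,\sigma)=0$, so $\Gamma\vDash_{Q}\varphi$.

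The substantive direction is left to right, which I would prove by contraposition: assuming $\Gamma\nvdash_{Q}\varphi\dotminus 2^{-n}$ for some fixed $n$, I will produce a model of $\Gamma$ giving $\varphi$ a strictly positive value, so that $\Gamma\nvDash_{Q}\varphi$. The crux is the consistency lemma \emph{if $\Gamma\nvdash_{Q}\varphi\dotminus 2^{-n}$, then $\Gamma\cup\{2^{-n}\dotminus\varphi\}$ is consistent}, which I would establish by its contrapositive. If $\Gamma\cup\{2^{-n}\dotminus\varphi\}$ is inconsistent it proves $1$, so by the Deduction Theorem (Theorem \ref{thm:deduc}) there is $k<\omega$ with $\Gamma\vdash_{Q}1\dotminus k(2^{-n}\dotminus\varphi)$. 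Treating $\varphi$ as a propositional variable, one checks that $(\varphi\dotminus 2^{-n})\dotminus(1\dotminus k(2^{-n}\dotminus\varphi))$ is valid in continuous propositional logic: writing $x$ for the value of $\varphi$, when $x\ge 2^{-n}$ the second operand $1\dotminus k(2^{-n}\dotminus\varphi)$ evaluates to $1$ and dominates the first, and when $x<2^{-n}$ the first operand $\varphi\dotminus 2^{-n}$ evaluates to $0$; so the value is $0$ in either case, for every $k$. By Fact \ref{fact:finite} (with empty premise set) this formula is provable, hence provable in continuous first-order logic after substitution, and one application of \emph{modus ponens} to $\Gamma\vdash_{Q}1\dotminus k(2^{-n}\dotminus\varphi)$ yields $\Gamma\vdash_{Q}\varphi\dotminus 2^{-n}$, establishing the lemma.

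Granting the lemma the argument closes at once: from $\Gamma\nvdash_{Q}\varphi\dotminus 2^{-n}$ the set $\Gamma\cup\{2^{-n}\dotminus\varphi\}$ is consistent, so by Theorem \ref{thm:comp} it is satisfiable, say by $(\mathfrak{M},\sigma)$. Then $\max(2^{-n}-\mathfrak{M}(\varphi,\sigma),0)=0$ forces $\mathfrak{M}(\varphi,\sigma)\ge 2^{-n}>0$ while $(\mathfrak{M},\sigma)\vDash_{Q}\Gamma$, witnessing $\Gamma\nvDash_{Q}\varphi$. I expect the main obstacle to be the consistency lemma, and specifically calibrating the Deduction Theorem output against the propositional tautology; it is worth stressing that the verification genuinely uses $2^{-n}>0$ (the analogue with the threshold $0$ is false, which is exactly why only the approximated form of strong completeness can hold). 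Everything else is soundness, completeness, and the already-established equivalence of $\vDash_{Q}$ and $\vDash_{QC}$.
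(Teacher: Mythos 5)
Your proposal is correct and follows essentially the same route as the paper: right to left by soundness, and for the substantive direction the completeness theorem applied to $\Gamma\cup\{2^{-n}\dotminus\varphi\}$ together with the Deduction Theorem and the Łukasiewicz tautology $(\beta\dotminus\alpha)\dotminus(1\dotminus k(\alpha\dotminus\beta))$ --- exactly the argument the paper invokes via ``cf.\ proof of Lemma \ref{lem:max}.'' The only cosmetic difference is that you run the hard direction by contraposition (non-provability yields consistency yields a countermodel) where the paper runs it directly (validity yields unsatisfiability yields inconsistency yields provability), and your handling of the metric case via the $\vDash_{Q}$/$\vDash_{QC}$ equivalence is an equally valid substitute for the paper's appeal to complete satisfiability.
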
 

\begin{proof} Right to left is by soundness (Theorem \ref{prop:sound}).  For the implication from left to right, if $\Gamma\vDash\varphi$, then for every $n<\omega$, $\Gamma\cup\{2^{-n}\dotminus\varphi\}$ is not (completely) satisfiable and so inconsistent by Theorem \ref{thm:comp}; hence, $\Gamma\vdash\varphi\dotminus 2^{-n}$ for all $n<\omega$ (cf. proof of Lemma \ref{lem:max}). 
\end{proof}
We can, however, try to make the best of the previous result.  We first offer a definition.

\begin{defn} 
\label{defn:deg}
Let $\mathcal{L}$ be a continuous signature (possibly with a metric), let $\Gamma\subseteq \textup{For}(\mathcal{L})$, and let $\varphi$ be a formula.  
\mbox{}
\begin{itemize}
\item[(i)] We define the \emph{degree of truth of} $\varphi$ \emph{with respect to} $\Gamma$,  $\varphi_{\Gamma}^{\footnotesize{\circ}}$, by setting
\begin{eqnarray*}
 \varphi_{\Gamma}^{\footnotesize{\circ}}&\equals&\sup\{\mathfrak{M}(\varphi,\sigma):(\mathfrak{M},\sigma)\vDash\Gamma\}. 
 \end{eqnarray*}
\item[(ii)] We define the \emph{degree of provability of} $\varphi$ \emph{with respect to} $\Gamma$,  $\varphi_{\Gamma}^{\footnotesize{\circledcirc}}$, by setting
\begin{eqnarray*}
 \varphi_{\Gamma}^{\tiny{\circledcirc}}&\equals&\inf\{p\in\mathbb{D}:\Gamma\vdash\varphi\dotminus p\}.
 \end{eqnarray*}
 \end{itemize}
\end{defn}

We then have the following result, commonly called \emph{Pavelka-style completeness}.
\begin{cor}
Let $\mathcal{L}$ be a continuous signature, and let $\Gamma\subseteq \textup{For}(\mathcal{L})$.  Then for every formula $\varphi$,  the degree of truth of $\varphi$ with respect to $\Gamma$ equals the degree of provability of $\varphi$ with respect to $\Gamma$.  In other words,
\begin{eqnarray*}
 \varphi_{\Gamma}^{\footnotesize{\circ}}=\varphi_{\Gamma}^{\footnotesize{\circledcirc}}.
  \end{eqnarray*}
\end{cor}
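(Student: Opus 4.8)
The plan is to establish the two inequalities $\varphi_{\Gamma}^{\footnotesize{\circ}}\le\varphi_{\Gamma}^{\footnotesize{\circledcirc}}$ and $\varphi_{\Gamma}^{\footnotesize{\circledcirc}}\le\varphi_{\Gamma}^{\footnotesize{\circ}}$ separately, the first by soundness and the second by the Approximated Strong Completeness corollary proved just above. Throughout I rely on the elementary observation that for a dyadic $p\in\mathbb{D}$ one has $\Gamma\vDash_{Q}\varphi\dotminus p$ if and only if $\mathfrak{M}(\varphi,\sigma)\le p$ for every model $(\mathfrak{M},\sigma)$ of $\Gamma$, which, since $\varphi_{\Gamma}^{\footnotesize{\circ}}$ is defined as a supremum, is equivalent to $\varphi_{\Gamma}^{\footnotesize{\circ}}\le p$. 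The first inequality is then immediate: if $p\in\mathbb{D}$ satisfies $\Gamma\vdash_{Q}\varphi\dotminus p$, then by Soundness (Theorem \ref{prop:sound}) we have $\Gamma\vDash_{Q}\varphi\dotminus p$, so $\varphi_{\Gamma}^{\footnotesize{\circ}}\le p$; taking the infimum over all such $p$ gives $\varphi_{\Gamma}^{\footnotesize{\circ}}\le\varphi_{\Gamma}^{\footnotesize{\circledcirc}}$.

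The second inequality is the substantive direction. Write $v\equals\varphi_{\Gamma}^{\footnotesize{\circ}}$. It suffices to show that $\Gamma\vdash_{Q}\varphi\dotminus q$ for every dyadic $q>v$, for then $\varphi_{\Gamma}^{\footnotesize{\circledcirc}}\le q$ for all such $q$, and since the dyadics are dense in $[0,1]$ this yields $\varphi_{\Gamma}^{\footnotesize{\circledcirc}}\le v$. So fix a dyadic $q>v$ and choose a dyadic $p$ with $v\le p<q$. Since $\mathfrak{M}(\varphi,\sigma)\le v\le p$ in every model of $\Gamma$, the observation gives $\Gamma\vDash_{Q}\varphi\dotminus p$. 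Applying the Approximated Strong Completeness corollary to the formula $\varphi\dotminus p$ then yields $\Gamma\vdash_{Q}(\varphi\dotminus p)\dotminus 2^{-n}$ for all $n<\omega$, and I fix one $n$ large enough that $2^{-n}\le q-p$, so that $p\dotplus 2^{-n}\le q$.

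It remains to pass syntactically from $(\varphi\dotminus p)\dotminus 2^{-n}$ to $\varphi\dotminus q$. Here I would use that the continuous propositional formula $(\varphi\dotminus q)\dotminus((\varphi\dotminus p)\dotminus 2^{-n})$ — regarding $\varphi$ as a propositional variable and $p,q,2^{-n}$ as the dyadic constants they denote — takes the value $0$ under every truth assignment, precisely because $p+2^{-n}\le q$ forces $\max(x-q,0)\le\max(\max(x-p,0)-2^{-n},0)$ for all $x\in[0,1]$. Thus this formula is propositionally valid, so by completeness of continuous propositional logic (Fact \ref{fact:finite}) it is provable; substituting the first-order formula $\varphi$ for the propositional variable, it is provable in continuous first-order logic as well. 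A single application of \emph{modus ponens} with the already-derived $\Gamma\vdash_{Q}(\varphi\dotminus p)\dotminus 2^{-n}$ then gives $\Gamma\vdash_{Q}\varphi\dotminus q$, which completes the second inequality and hence the equality $\varphi_{\Gamma}^{\footnotesize{\circ}}=\varphi_{\Gamma}^{\footnotesize{\circledcirc}}$.

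Two boundary cases remain to be dispatched. When $v=1$ there are no dyadics $q>v$, but $\varphi\dotminus 1$ is propositionally valid and hence provable by Fact \ref{fact:finite}, so $1\in\{p\in\mathbb{D}:\Gamma\vdash_{Q}\varphi\dotminus p\}$ and $\varphi_{\Gamma}^{\footnotesize{\circledcirc}}\le 1=v$; and when $\Gamma$ is inconsistent both quantities equal $0$ under the convention $\sup\emptyset=0$, using Theorem \ref{thm:comp}. The step I expect to be the main obstacle is the syntactic passage of the previous paragraph: the real content lies in marshalling the propositional validity together with the choice of $n$ so that density of the dyadics bridges the gap between the merely \emph{approximate} derivability that Approximated Strong Completeness supplies and the \emph{exact} statement $\Gamma\vdash_{Q}\varphi\dotminus q$ that the degree of provability requires.
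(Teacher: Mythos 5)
Your proof is correct, but the substantive direction is argued along a genuinely different route from the paper's. For $\varphi_{\Gamma}^{\footnotesize{\circ}}\geq\varphi_{\Gamma}^{\footnotesize{\circledcirc}}$ the paper works semantically and contrapositively: given a dyadic $p<\varphi_{\Gamma}^{\footnotesize{\circledcirc}}$, it notes that $\Gamma\nvdash_{Q}\varphi\dotminus p$, hence (by the deduction-theorem trick from Lemma \ref{lem:max}) $\Gamma\cup\{p\dotminus\varphi\}$ is consistent, hence by Theorem \ref{thm:comp} satisfiable, which produces a model of $\Gamma$ in which $\varphi$ takes value at least $p$. You instead work syntactically: starting from $\Gamma\vDash_{Q}\varphi\dotminus p$ for a dyadic $p$ just above $\varphi_{\Gamma}^{\footnotesize{\circ}}$, you invoke the Approximated Strong Completeness corollary to get $\Gamma\vdash_{Q}(\varphi\dotminus p)\dotminus 2^{-n}$, and then absorb the $2^{-n}$ error into the slack $q-p$ via the propositionally valid scheme $(\varphi\dotminus q)\dotminus((\varphi\dotminus p)\dotminus 2^{-n})$, transferred to $\vdash_{Q}$ by Fact \ref{fact:finite} and substitution exactly as the paper does elsewhere (e.g.\ in Lemma \ref{lem:Bound substitution}). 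Both arguments ultimately rest on Theorem \ref{thm:comp}; the paper's is shorter because it uses the satisfiability form of completeness directly and never has to manipulate dyadic arithmetic inside the proof system, whereas yours has the virtue of making explicit how the approximate derivability supplied by Approximated Strong Completeness is upgraded to exact derivability of $\varphi\dotminus q$, and of isolating the only analytic input (density of $\mathbb{D}$ and the inequality $\max(x-q,0)\leq\max(\max(x-p,0)-2^{-n},0)$ when $p+2^{-n}\leq q$). Your handling of the boundary cases $v=1$ and $\Gamma$ inconsistent is fine and slightly more scrupulous than the paper, which leaves them implicit.
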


\begin{proof}
We consider two cases:
\mbox{}
\begin{itemize}
\item[(i)] We first show $\varphi_{\Gamma}^{\footnotesize{\circ}}\leq\varphi_{\Gamma}^{\footnotesize{\circledcirc}}$. This follows from soundness (Theorem \ref{prop:sound}).  To see this, observe that for every $p\in\mathbb{D}$ such that $\Gamma\vdash\varphi\dotminus p$, by soundness $\Gamma\vDash\varphi\dotminus p$, so for any continuous $\mathcal{L}$(-pre)-structure $\mathfrak{M}$ and $\mathfrak{M}$-assignment $\sigma$ such that  $(\mathfrak{M},\sigma)\vDash\Gamma$, $\mathfrak{M}(\varphi,\sigma)\leq p$;  thus, $\varphi_{\Gamma}^{\footnotesize{\circ}}\leq p$.  It follows that $\varphi_{\Gamma}^{\footnotesize{\circ}}\leq \varphi_{\Gamma}^{\footnotesize{\circledcirc}}$.
\item[(ii)] We now show $\varphi_{\Gamma}^{\footnotesize{\circ}}\geq\varphi_{\Gamma}^{\footnotesize{\circledcirc}}$.  It suffices to show that for each $p\in\mathbb{D}$ such that $p<\varphi_{\Gamma}^{\footnotesize{\circledcirc}}$, there is a continuous $\mathcal{L}$(-pre)-structure $\mathfrak{M}$ and $\mathfrak{M}$-assignment $\sigma$ such that  $(\mathfrak{M},\sigma)\vDash\Gamma$ and $p\leq\mathfrak{M}(\varphi,\sigma)$.  Let $p\in\mathbb{D}$, and suppose $p<\varphi_{\Gamma}^{\footnotesize{\circledcirc}}$. Then $\Gamma\nvdash\varphi\dotminus p$, so $\Gamma\cup\{p\dotminus\varphi\}$ is consistent (cf. proof of Lemma \ref{lem:max}), whence by Theorem \ref{thm:comp}, $\Gamma\cup\{p\dotminus\varphi\}$ is (completely) satisfiable.  Hence, there is a continuous $\mathcal{L}$(-pre)-structure $\mathfrak{M}$ and $\mathfrak{M}$-assignment $\sigma$ such that  $(\mathfrak{M},\sigma)\vDash\Gamma$ and $\mathfrak{M}(p\dotminus\varphi,\sigma)=0$, so  $p\leq\mathfrak{M}(\varphi,\sigma)$.

\end{itemize}
\end{proof}

\begin{defn} Let $\mathcal{L}$ be a continuous signature with a metric.
\mbox{}
\begin{itemize}
\item[(i)] We call $T$ a \emph{theory} if $T$ is a set of formulae in $\mathcal{L}$ without free variables, i.e., a set of sentences.
\item[(ii)] We call a theory $T$ \emph{complete} if there is a continuous $\mathcal{L}$-structure $\mathfrak{M}$   (and an $\mathfrak{M}$-assignment $\sigma$) such that $T=\{\varphi:(\mathfrak{M},\sigma)\vDash_{QC}\varphi\}$.  Otherwise, we call $T$ an \emph{incomplete theory}.
\end{itemize}
\end{defn} 

\begin{defn} Let $T$ be a theory.  
\mbox{}
\begin{itemize}
\item[(i)] If $T$ is  complete, we say  $T$ is \emph{decidable} if for every sentence $\varphi$, the value $\varphi_{T}^{\footnotesize{\circ}}$ is a recursive real and uniformly computable from $\varphi$.
\item[(ii)] If $T$ is incomplete, we say $T$ is \emph{decidable} if for every sentence
$\varphi$ the real number $\varphi_{T}^{\footnotesize{\circ}}$ is uniformly recursive from $\varphi$.
\end{itemize}
\end{defn}

\begin{cor}
  Every complete theory with a recursive or recursively enumerable axiomatization is decidable.
\end{cor}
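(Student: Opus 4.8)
The plan is to reduce the computation of $\varphi_T^{\circ}$ to a dovetailed search through the recursively enumerable set of theorems of $T$, turning each proof we discover into a rational two-sided bound on $\varphi_T^{\circ}$ by means of the Pavelka-style completeness corollary. First I would fix a recursive (or recursively enumerable) axiomatization $A$ of $T$, so that $A$ and $T$ have the same models and hence $\psi_A^{\circ}=\psi_T^{\circ}$ for every sentence $\psi$. Since the language is countable and recursive and both the axiom schemata and the single rule of inference (\emph{modus ponens}) are effective, the predicate ``$\pi$ is a formal deduction from $A$ ending in $\psi$'' is decidable, so enumerating all pairs $(\pi,\psi)$ shows that the set of theorems $\{\psi:A\vdash_Q\psi\}$ is recursively enumerable, uniformly in $A$. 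This is the sole recursion-theoretic input.

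The upper-bound half is immediate from the Pavelka-style completeness corollary: since $\varphi_T^{\circ}=\varphi_T^{\circledcirc}=\inf\{p\in\mathbb{D}:A\vdash_Q\varphi\dotminus p\}$, every enumerated theorem of the form $\varphi\dotminus p$ with $p\in\mathbb{D}$ yields an upper bound $\varphi_T^{\circ}\le p$, and the infimum of these dyadic upper bounds is exactly $\varphi_T^{\circ}$. For the matching lower bounds I would exploit completeness of $T$. Because $T$ is complete, every sentence takes a single value across all models of $T$: its defining model supplies, for each dyadic $p$ above (resp.\ below) that value, the theorem $\varphi\dotminus p$ (resp.\ $p\dotminus\varphi$), and these pin the value in any other model of $T$. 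Consequently $(\neg\varphi)_T^{\circ}=1-\varphi_T^{\circ}$, and applying the Pavelka identity to $\neg\varphi$ gives $1-\varphi_T^{\circ}=\inf\{q\in\mathbb{D}:A\vdash_Q\neg\varphi\dotminus q\}$. Hence each enumerated theorem $\neg\varphi\dotminus q$ with $q\in\mathbb{D}$ yields a lower bound $\varphi_T^{\circ}\ge 1-q$, and the supremum of these dyadic lower bounds is again exactly $\varphi_T^{\circ}$.

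The decision procedure is then to dovetail the enumeration of theorems of $A$, maintaining the least upper bound $u$ seen among the $p$ with $\varphi\dotminus p$ enumerated and the greatest lower bound $\ell$ seen among the $1-q$ with $\neg\varphi\dotminus q$ enumerated; on input $(\varphi,n)$ one searches until $u-\ell<2^{-n}$ and outputs $\ell$. Since the dyadic upper bounds have infimum $\varphi_T^{\circ}$ and the dyadic lower bounds have supremum $\varphi_T^{\circ}$, for each $n$ there eventually appear an upper bound within $2^{-n-1}$ above and a lower bound within $2^{-n-1}$ below the value, so the search halts, $\varphi_T^{\circ}\in[\ell,u]$, and $\ell$ approximates $\varphi_T^{\circ}$ to within $2^{-n}$. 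The whole procedure takes $\varphi$ as input and is therefore uniform in $\varphi$, so $\varphi_T^{\circ}$ is a recursive real uniformly computable from $\varphi$; that is, $T$ is decidable.

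The step I expect to be the main obstacle is the lower-bound half, specifically justifying $(\neg\varphi)_T^{\circ}=1-\varphi_T^{\circ}$. This rests on the fact that a complete theory assigns a unique value to each sentence, which is exactly what lets the $\neg\varphi$-proofs squeeze $\varphi_T^{\circ}$ from below and thereby convert the one-sided Pavelka estimate into a genuine two-sided approximation; the upper-bound half is a direct reading of Pavelka-style completeness, and the remaining bookkeeping (effectiveness of the deduction relation, uniformity, and the termination argument) is routine.
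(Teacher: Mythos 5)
Your argument is correct and is exactly the intended one: the paper states this corollary without proof, and the standard classical argument it alludes to is precisely your dovetailed search, using Pavelka-style completeness applied to $\varphi$ for upper bounds and to $\neg\varphi$ (together with the fact that a complete theory determines the value of each sentence, so $(\neg\varphi)_T^{\circ}=1-\varphi_T^{\circ}$) for lower bounds. The only point worth flagging is that recursive enumerability of the theorems also requires the instances of (A13)--(A14) to be recursively enumerable, i.e.\ the moduli $\delta_{s,i}$ must be suitably computable, which is implicit in the paper's hypothesis of a ``countable, recursive language.''
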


As with classical first-order logic, the following corollary could be obtained more directly by way of an ultraproduct construction (see \cite{BenYaacov-Usvyatsov:CFO}).  We nevertheless include it for the sake of completeness.
\begin{cor}[Compactness]
Let $\mathcal{L}$ be a continuous signature, and let $\Gamma\subseteq \textup{For}(\mathcal{L})$.  If every finite subset $\Gamma_{0}$ of $\Gamma$ is $($completely$)$ satisfiable, then $\Gamma$ is $($completely$)$ satisfiable.
\end{cor}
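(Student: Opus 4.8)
The plan is to derive compactness from the completeness theorem (Theorem~\ref{thm:comp}) and soundness (Theorem~\ref{prop:sound}), with the finiteness of formal deductions providing the crucial bridge. I would argue by contraposition, showing that if $\Gamma$ is not (completely) satisfiable then some finite subset $\Gamma_{0}\subseteq\Gamma$ fails to be (completely) satisfiable as well; the contrapositive of this implication is exactly the assertion of the corollary, and the metric and metric-free cases are handled uniformly by the parenthetical formulations of the two theorems invoked.

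So suppose $\Gamma$ is not (completely) satisfiable. By the contrapositive of Theorem~\ref{thm:comp}, $\Gamma$ is inconsistent, and hence $\Gamma\vdash_{Q}\varphi$ for every formula $\varphi$; in particular there is a formal deduction of $1$ from $\Gamma$. Since every formal deduction is a finite sequence of formulae, it invokes only finitely many members of $\Gamma$ as premises (this is the remark, recorded in \S\ref{sec:Axioms group 2}, that a formula provable from $\Gamma$ is provable from a finite subset of $\Gamma$). Collecting these finitely many premises into a set $\Gamma_{0}\subseteq\Gamma$, I obtain $\Gamma_{0}\vdash_{Q}1$.

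Next I would observe that $\Gamma_{0}\vdash_{Q}1$ already forces $\Gamma_{0}$ to be inconsistent. Indeed, for every formula $\psi$ the formula $\psi\dotminus 1$ is valid---its value is $\max(\mathfrak{M}(\psi,\sigma)-1,0)=0$ in every pre-structure---so it is provable by the completeness of the propositional fragment underlying our axioms (Fact~\ref{fact:weak} and Fact~\ref{fact:finite}) and lifts to the first-order system. A single application of \emph{modus ponens} to $1$ and $\psi\dotminus 1$ then yields $\Gamma_{0}\vdash_{Q}\psi$; as $\psi$ was arbitrary, $\Gamma_{0}$ is inconsistent. By the contrapositive of part~(ii) of the soundness theorem (Theorem~\ref{prop:sound}), an inconsistent set of formulae cannot be (completely) satisfiable, so $\Gamma_{0}$ is not (completely) satisfiable, contradicting the hypothesis that every finite subset of $\Gamma$ is.

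The argument requires no new machinery once Theorem~\ref{thm:comp} is in hand; it is the exact analogue of the classical deduction that compactness follows from completeness. The only step calling for any care is the passage from the definition of inconsistency---which quantifies over \emph{all} formulae---to a single finite deduction witnessing it, and this is precisely where the finitary character of formal deductions does the essential work. Accordingly, I do not anticipate a genuine obstacle beyond correctly marshalling the already-established soundness and completeness results.
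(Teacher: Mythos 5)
Your proof is correct and is exactly the derivation the paper intends: the paper states this corollary without proof (noting only that an ultraproduct construction would also work), and the standard route---unsatisfiable $\Rightarrow$ inconsistent by Theorem~\ref{thm:comp}, finiteness of formal deductions to extract $\Gamma_{0}\vdash_{Q}1$, and soundness to conclude $\Gamma_{0}$ is unsatisfiable---is precisely what you have written. All the auxiliary steps you check (that $\psi\dotminus 1$ is provable and that proving $1$ entails inconsistency) are consistent with facts the paper records in \S\ref{sec:bbt}.
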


\providecommand{\bysame}{\leavevmode\hbox to3em{\hrulefill}\thinspace}
\providecommand{\MR}{\relax\ifhmode\unskip\space\fi MR }
\providecommand{\MRhref}[2]{%
  \href{http://www.ams.org/mathscinet-getitem?mr=#1}{#2}
}
\providecommand{\href}[2]{#2}


\end{document}